\definecolor{light-gray1}{gray}{0.90}
\definecolor{light-gray2}{gray}{0.80}
\definecolor{light-gray3}{gray}{0.60}
\newtheorem{thm}{Theorem}[section]
\newtheorem{lem}[thm]{Lemma}
\newtheorem{prop}[thm]{Propsition}
\newtheorem{cor}[thm]{Corollary}
\newtheorem{defn}[thm]{Definition}
\newtheorem{rem}[thm]{Remark}
\numberwithin{equation}{section}
\newcommand{\les}{\lesssim}
\newcommand{\lam}{{\lambda}}
\newcommand{\vp}{{\varphi}}
\newcommand{\de}{{\delta}}
\def\normo#1{\left\|#1\right\|}
\def\normb#1{\Big\|#1\Big\|}
\def\norm#1{\|#1\|}
\def\bra#1{\langle#1\rangle}
\def\wt#1{\widetilde{#1}}
\def\wh#1{\widehat{#1}}
\def\set#1{\{#1\}}
\newcommand{\T}{{\mathbb T}}
\newcommand{\R}{{\mathbb R}}
\newcommand{\Z}{{\mathbb Z}}
\newcommand{\ft}{{\mathcal{F}}}
\newcommand{\N}{{\mathcal{N}}}
\newcommand{\NT}{\mathcal{NT}}
\newcommand{\Sch}{{\mathcal{S}}}
\newcommand{\supp}{{\mbox{supp}}}
\newcommand{\noi}{{\noindent}}
\newcommand{\px}{\partial_x}
\newcommand{\pt}{\partial_t}
\begin{document}

	\title[Fifth-order mKdV equations]{Energy solutions for the fifth-order modified Korteweg de-Vries equations}
	
	\author{Chulkwang Kwak}
		\address{Department of Mathematics, Ewha Womans University, Seoul, 03760, Republic of Korea}
	\email{ckkwak@ewha.ac.kr}

\author{Kiyeon Lee}
\address{Stochastic Analysis and Application Research Center(SAARC), Korea Advanced Institute of Science and Technology, 291 Daehak-ro, Yuseong-gu, Daejeon, 34141, Republic of	Korea}
\email{kiyeonlee@kaist.ac.kr}

	\thanks{2020 {\it Mathematics Subject Classification.} 35Q53, 37K10.}
	\thanks{{\it Keywords and phrases.} fifth-order mKdV, short-time $X^{s,b}$-space, modified energy, compactness method}
    \thanks{$*$All authors agree that this work is fairly contributed.}

\begin{abstract}
We consider the Cauchy problem for the fifth-order modified Korteweg-de Vries equation (mKdV) under the periodic boundary condition. The fifth-order mKdV is an asymptotic model for shallow surface waves, and (in the perspective of integrable systems) the second equation in the mKdV hierarchy as well. In strong contrast with the non-periodic case, periodic solutions for dispersive equations do not have a (local) smoothing effect, and this observation becomes a major obstacle to considering the Cauchy problem for dispersive equations under the periodic condition, consequently, the periodic fifth-order mKdV shows a quasilinear phenomenon, while the non-periodic case can be considered as a semilinear equation.

\medskip

In this paper, we mainly establish the global well-posedness of the fifth-order mKdV in the energy space ($H^2(\T)$), which is an improvement of the former result by the first author \cite{Kwak2018-2}. The main idea to overcome the lack of (local) smoothing effect is to introduce a suitable (frequency dependent) short-time space originally motivated by the work by Ionescu, Kenig, and Tataru \cite{IKT2008}. The new idea is to combine the (frequency) localized modified energy with additional weight in the spaces, which eventually handles the logarithmic divergence appearing in the energy estimates. Moreover, by using examples localized in low and very high frequencies, we show that the flow map of the fifth-order mKdV equation is not $C^3$, which implies that the Picard iterative method is not available for the local theory. This weakly concludes the quasilinear phenomenon of the periodic fifth-order mKdV. 
\end{abstract}
	
		\maketitle

	\tableofcontents	
		
\section{Introduction}

\subsection{Model description}
The Korteweg-de Vries (KdV) equation has been well-understood as an asymptotic model for shallow water waves and an completely integrable system for last decades. 

\medskip

In a perspective of an asymptotic model for surface waves, the incompressible, irrotational Euler equation is considered as a free boundary problem. With suitable non-dimensional parameters denoted by $\delta := \frac{h}{\lambda}$ and $\varepsilon := \frac{a}{h}$, where $h, \lambda$ and $a$ describe the water depth, the wave length and the amplitude of the free surface, respectively, the physical condition $\delta \ll 1$ is supposed to characterize water waves as long waves or shallow water waves. Moreover, with subspecialized conditions between $\varepsilon$ and $\delta$, there are several long wave approximations. We introduce three typical long wave regimes.

\medskip

\begin{enumerate}
\item Shallow water wave: $\varepsilon = 1$ and $\delta \ll 1$.
\item Korteweg-de Vries (KdV): $\varepsilon = \delta^2 \ll 1$ and $\mu \neq \frac13$.
\item Kawahara: $\varepsilon = \delta^4 \ll 1$ and $\mu = \frac13 + \nu\varepsilon^{\frac12}$.
\end{enumerate}
Here, another non-dimensional parameter $\mu$ is called the Bond number, which comes from the surface tension on the free surface.

\medskip

In the regime of Item (1), the (so-called) shallow water equations are obtained as $\delta \to 0$. It is known that the shallow water equations are analogous to one-dimensional compressible Euler equations for an isentropic flow of a gas of the adiabatic index $2$, and thus its solutions generally have a singularity in finite time, even if the initial data are sufficiently smooth. Therefore, this long wave regime is used to explain breaking waves of water waves. In the regime of Item (2), the following well-known, notable KdV equation can be derived from capillary-gravity waves \cite{KdV1895}: 
\[\pm2 u_t + 3uu_x +\left( \frac13 - \mu\right)u_{xxx} = 0.\]
It is worth to notice that when the Bond number $\mu = \frac13$, the equation degenerates to the inviscid Burgers equation. In connection with this critical Bond number, Hasimoto \cite{Hasimoto1970} derived a higher-order KdV equation of the form
\[\pm2 u_t + 3uu_x - \nu u_{xxx} +\frac{1}{45}u_{xxxxx} = 0,\]
in the regime of Item (3), which is nowadays called the Kawahara equation. See, for instance, \cite{BBM, BCS1, BCL, BLS} and references therein for more derivations of long wave approximations for water waves. 

\medskip

As a completely integrable system, Gardner, Greene, Kruskal and Miura \cite{GGKM1967} has discovered that the spectrum of the Hill operator
\begin{equation}\label{eq:Hill}
L(t) = \frac{d^2}{dx^2} - u(t,x)
\end{equation}
on the interval $[0,4\pi]$ is independent of $t$ when the potential $u(t,x)$ solves the KdV equation given by
\begin{equation}\label{eq:KdV}
u_t + u_{xxx} = 6uu_x.
\end{equation}
Later, Lax \cite{Lax1968} reformulated such a property as the so-called \emph{Lax pair formulation}, precisely, 
\begin{equation}\label{eq:Lax}
\mbox{KdV} \equiv \partial_t L = [B,L] = BL - LB,
\end{equation}
where $L$ is given in \eqref{eq:Hill} and $B(t) = 4\partial_x^3 - 6 u \partial_x -3\partial_x u$, and $[\cdot, \cdot]$ is the standard commutator. The right-hand side of \eqref{eq:Lax} can be understood as isospectral deformation of $L(t)$, which implies $\mbox{spec}(L(t)) = \mbox{spec}(L(0))$ as mentioned before. This observation opened the theory of complete integrable systems of PDEs (Lax pairs for the nonlinear Schr\"odinger equation, the Sine-Gordon equation, the Toda lattice, etc had been found), and the KdV equation holds a central example among other integrable systems. The differential operator $B$ can be extended to higher-order ones by
\[B_n = 4^n \px^{2n+1} + \sum_{j=1}^n \{a_{nj}\px^{2j-1} + \px^{2j-1} a_{nj}\}, \hspace{1em} n=0,1,\cdots,\]
where $B_0 = \px$ and the coefficient $a_{nj} = a_{nj}(u)$ can be chosen such that the commutator $[A_n,L]$ has order zero, and then Lax also generalized its formulation in the higher-order form
\[\pt L = B_n L - LB_n,\]
which provides the \emph{KdV hierarchy} as (we only give first few equations and Hamiltonians)
\begin{equation}\label{eq:KdV Hamiltonian}
\begin{split}
\pt u - \px u = 0 \hspace{0.5em}&\hspace{0.5em} \int \frac12 u^2\\
\pt u + \px^3 u - 6u\px u = 0 \hspace{0.5em}&\hspace{0.5em} \int \frac12 (\px u)^2 +  u^3\\
\pt u - \px^5 u + 5\px(\px^2(u^2) - (\px u)^2 - 2u^3) = 0 \hspace{0.5em}&\hspace{0.5em} \int \frac12 (\px^2 u)^2 - 5u(\px u)^2 + \frac52 u^4\\
\vdots \hspace{1cm}&\hspace{1cm} \vdots\\
\end{split}
\end{equation} 
Magri \cite{Magri1978}, thereafter, found an additional structure in this complete integrable system, the so-called bi-Hamiltonian structure, and further studies on integrable systems  have been widely done by several researchers (see, for instance, \cite{AS1981, KP2003, Deift} and references therein). 

\medskip

On the other hand, the Miura transformation \cite{Miura1968}
\begin{equation}\label{eq:miura}
u = v_x + v^2
\end{equation}
explains the relation between the KdV and mKdV equations, particularly, if $v$ satisfies the defocusing mKdV equation given by
\begin{equation}\label{eq:mkdv}
v_t + v_{xxx} - 6v^2v_x = 0,
\end{equation}
then $u$ defined as in \eqref{eq:miura} satisfies \eqref{eq:KdV}\footnote{However, the converse is not true due to the presence of the operator $2v + \px$ in the relation
\[u_t + u_{xxx} - 6uu_x = (2v + \px)(v_t + v_{xxx} - 6v^2v_x).\]}. 
Moreover, Miura, Gardner and Kruskal \cite{MGK1968} discovered infinitely many conserved densities and their flux corresponding to the mKdV, and it showed that the mKdV is also a completely integrable system. Recently, Choudhuri, Talukdar and Das \cite{CTD2009} derived the Lax representation and constructed the bi-Hamiltonian structure of the equations in the mKdV hierarchy. The following is a bundle of a few equations and their associated Hamiltonians with respect to bi-Hamiltonian structures (see also \cite{Grunrock} and references therein for the relation between \eqref{eq:KdV Hamiltonian} and \eqref{eq:hamiltonian}):
\begin{equation}\label{eq:hamiltonian}
\begin{split}
\pt u - \px u = 0 \hspace{0.5em}&\hspace{0.5em} \int \frac12 u^2\\
\pt u + \px^3 u - 6u^2\px u = 0 \hspace{0.5em}&\hspace{0.5em} \int \frac12 u_x^2 + \frac12 u^4\\
\pt u - \px^5 u + \px(10(u^2\px^2 u + u(\px u)^2) - 6u^5) = 0 \hspace{0.5em}&\hspace{0.5em} \int \frac12 u_{xx}^2  + 5u^2u_x^2+u^6\\
\vdots \hspace{1cm}&\hspace{1cm} \vdots\\
\end{split}
\end{equation} 

In this paper, we discuss the initial value problem for the fifth-order equation in the modified KdV hierarchy \eqref{eq:hamiltonian} under the periodic boundary condition:
\begin{equation}\label{eq:5mkdv_integrable}
\begin{cases}
\pt u - \px^5 u + 40u\px u \px^2u + 10 u^2\px^3u + 10(\px u)^3 - 30u^4 \px u = 0, \hspace{1em} (t,x) \in \R \times \T, \\
u(0,x) = u_0(x) \in H^s(\T),
\end{cases}
\end{equation}
where $\T = \R/2\pi\Z =  [0,2\pi]$ and $u=u(t,x)$ is a real-valued function.

\subsection{Main theorem}
It is already known that, due to the inverse scattering method, the global solution for \eqref{eq:5mkdv_integrable} exists for any Schwartz initial data. However, generalizing coefficients for the nonlinear terms may break the integrable structure, thus the theory of complete integrable systems is no longer available for the following equation:
\begin{equation}\label{eq:5mkdv-gen}
	\begin{cases}
		\pt u - \px^5 u + c_1u\px u \px^2u + c_2 u^2\px^3u + c_3(\px u)^3 + c_4u^4 \px u = 0,\\
		u(0,x) = u_0(x) \in H^s(\T),
	\end{cases}
\end{equation}
where $c_j$, $j=1,\cdots,4$, are real constant. 

\medskip

Even for integrable dispersive equations, the study on the low regularity local theory based on the analytic method is regarded as an important subject in order to develop mathematical theories (particularly, Harmonic analysis, Fourier analysis, Functional analysis, and so on for dispersive equations). Besides, the integrable structures under the non-periodic setting have been ignored (but the integrable structures, recently, were essentially used to prove the low regularity well-posedness of some particular equations, see Remark \ref{rem:Killip} below). This work, however, is to clarify that, under the periodic setting, partial properties of integrability (for instance, not all but some of conservation laws) are essentially needed to study the low regularity well-posedness problem. The following three Hamiltonians 
\begin{equation}\label{eq:hamiltonian-2}
	\begin{aligned}
\mathcal H_0 = \mathcal H_0[u](t) &:=  \int \frac12 u^2 (t,x) \; dx,\\
\mathcal H_1 = \mathcal H_1[u](t) &:=\int \left(\frac12 \left( \px u\right)^2 + \frac{c_1}{80} u^4 \right)(t,x) \; dx,\\
\mathcal H_2 = \mathcal H_2[u](t) &:= \int \left(\frac12 \left( \px^2 u \right)^2+ \frac{c_1}{8}u^2\left(\px u\right)^2 + \frac{c_1^2}{1600}u^6 \right) (t,x) \; dx,
	\end{aligned}
\end{equation}
are particularly required for the $H^2(\T)$ global well-posedness, and it restricts the coefficients in \eqref{eq:5mkdv-gen} to the following relations:
\begin{equation}\label{eq:coefficient constraint}
\frac{c_1}4 = c_2 = c_3 \quad \mbox{and} \quad -\frac3{160}c_1^2 =c_4.
\end{equation}

\begin{rem}
One can see that all quantities in \eqref{eq:hamiltonian-2} are conserved in time by the flow of \eqref{eq:5mkdv-gen} with \eqref{eq:coefficient constraint}.
\end{rem}

\begin{rem}
Taking $c_1 = 40$, one has \eqref{eq:5mkdv_integrable} from the equation \eqref{eq:5mkdv-gen} with \eqref{eq:coefficient constraint}.
\end{rem}

\begin{rem}\label{rem:Hamiltonian system}
The equation \eqref{eq:5mkdv-gen} with \eqref{eq:coefficient constraint} can be expressed as the Hamiltonian system with respect to $\mathcal H_2$ in \eqref{eq:hamiltonian-2} as follows:
\begin{equation*}
u_t = \partial_x \nabla_u \mathcal H_2[u(t)]  = \nabla_{\omega_{-\frac{1}{2}}} \mathcal H_2[u(t)],
\end{equation*}
where $\nabla_u$ is the $L^2$ gradient and $\omega_{-\frac{1}{2}}$ is the symplectic form in $H^{-\frac{1}{2}}$ defined as 
\[\omega_{-\frac{1}{2}}\left(v,w\right) := \int_{\T} v \partial_x^{-1} w dx,\]
for all $v,w \in H_0^{-\frac{1}{2}}$\footnote{Here $H_0^s$ space is the subspace of $H^s(\T)$ whose elements satisfy the mean zero condition}. 
\end{rem}

\begin{rem}\label{rem:GlobalEnergy}
Due to the conservations of $\mathcal H_0$, $\mathcal H_1$, and $\mathcal H_2$ in \eqref{eq:hamiltonian-2}, $H^2(\T)$ can be regarded as the energy space for the equation \eqref{eq:5mkdv-gen} with \eqref{eq:coefficient constraint}.
\end{rem}

\medskip

In this paper, we mainly prove that the equation \eqref{eq:5mkdv-gen} with \eqref{eq:coefficient constraint} is globally well-posed in the energy space $H^2(\T)$. To be precise, we introduce the notion of the well-posedness (for particularly scaling subcritical equations). The Duhamel principle says that the equation \eqref{eq:5mkdv-gen} with \eqref{eq:coefficient constraint} is equivalent to the following integral equation
\begin{equation}\label{Duhamel}
u(t) = S(t)u_0 - \int_0^t S(t-s) F(u)(s) \; ds,
\end{equation}
where $S(t)$ is the generator of the free evolution for $\partial_t u - \partial_x^5 u =0$, and $F(u)$ represents the nonlinear terms in \eqref{eq:5mkdv-gen} with \eqref{eq:coefficient constraint} (will be precisely introduced). The well-known notion of well-posedness was initially proposed by Hadamard \cite{Hadamard}, and now it is precisely stated as
\begin{defn}[Local well-posedness]\label{def:WP}
Let $u_0 \in H^s(\T)$ be given. We say that the initial value problem for \eqref{eq:5mkdv-gen} with \eqref{eq:coefficient constraint} is locally well-posed in $H^s(\T)$ if the following holds: 
\begin{enumerate}
\item \emph{(Existence)} There exist a time $T = T(\norm{u_0}_{H^s(\T)}) > 0$ and a subset $X_T^s$ of $C([0,T];H^s(\T))$ such that $u$ satisfying \eqref{Duhamel} exists in $X_T^s$.

\medskip
 
\item \emph{(Uniqueness)} The solution is unique in $X_T^s$.

\medskip 

\item \emph{(Continuous dependence on the data)} The map $u_0 \mapsto u$ is continuous from a ball $B \subset H^s(\T)$ to $X_T^s$ (with the $H^s(\T)$ topology).
\end{enumerate}
\end{defn}

\begin{rem}\label{rem:WP}
When $T$ can be taken arbitrary large, we say that the initial value problem for \eqref{eq:5mkdv-gen} with \eqref{eq:coefficient constraint} is globally (in time) well-posed in $H^s(\T)$.
\end{rem}

\begin{rem}\label{rem:ill-posed}
We say that given equation is $C^k$ well-posed, if the continuity condition of (3) in Definition \ref{def:WP} can be replaced by $C^k$-differentiability. Otherwise, we call $C^k$ ill-posedness. 
\end{rem}

\begin{thm}\label{thm:main}
	Let $s \ge 2$. Let $u_0 \in H^s(\T)$ be such that
	\begin{equation}\label{eq:level set}
		\int_{\T} (u_0(x))^2 \; dx = \gamma_1, \hspace{2em} \int_{\T} (\px u_0(x))^2 + (u_0(x))^4 \; dx = \gamma_2
	\end{equation}
	for some $\gamma_1, \gamma_2 \ge 0$. Then, the equation \eqref{eq:5mkdv-gen} with \eqref{eq:coefficient constraint} is locally well-posed in $H^s(\T)$ in the sense of Definition \ref{def:WP}. Particularly, the solution $u$ satisfies
	\begin{align}
		\begin{aligned}\label{eq:result}
		&u(t,x) \in C([-T,T];H^s(\T)) \quad \mbox{and}\\
		&\eta(t)\sum_{n \in \Z} e^{i(nx - 20n\int_0^t \norm{u(s)}_{L^4}^4 \; ds)}\wh{u}(t,n) \in  F^s(T) \cap C([-T,T];H^s(\T)).
		\end{aligned}
	\end{align}
Moreover, the flow map $S_T : H^s(\T) \to C([-T,T];H^s(\T))$ is continuous on the level set in $H^s(\T)$ satisfying \eqref{eq:level set}.
\end{thm}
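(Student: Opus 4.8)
The plan is to run the short-time Bourgain ($X^{s,b}$) method in the spirit of \cite{IKT2008}, combined with a frequency-localized modified energy carrying an extra weight; this is what makes tractable a quasilinear problem with no local smoothing. I only describe the architecture.

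\emph{Reduction and function spaces.} First I would isolate and remove the only genuinely non-perturbative part of the nonlinearity in \eqref{eq:5mkdv-gen}, the resonant transport term $-20\bigl(\int_\T u^4\,dx\bigr)\px u$ produced by $c_4 u^4\px u$, via the $L^2$-isometric, time-dependent translation $v(t,x):=u\bigl(t,\,x-20\int_0^t\norm{u(s)}_{L^4(\T)}^4\,ds\bigr)$, equivalently $\wh v(t,n)=e^{-20in\int_0^t\norm{u(s)}_{L^4}^4\,ds}\wh u(t,n)$ --- precisely the object appearing in \eqref{eq:result}. Then $v$ solves $\pt v-\px^5 v=\mathcal N(v)$ with $\mathcal N(v)$ free of that resonance. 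Next I would fix a Littlewood--Paley decomposition $P_k$ ($N=2^k$), use an $X^{s,b}$-type norm with $b=\tfrac12$ localized to time intervals of length $\approx 2^{-2k}$ at frequency $N$, and assemble it dyadically into a short-time space $F^s(T)$, a companion ``nonlinear'' space $N^s(T)$, and an energy space $E^s(T)$ with $\norm{u}_{E^s(T)}^2\approx\sup_{|t|\le T}\norm{u(t)}_{H^s}^2$ corrected by frequency-localized modified energies. Crucially, all three norms would carry an additional $\bra{k}$-type weight (morally, a slightly shorter time localization), whose role surfaces in the energy estimate.

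\emph{The three basic estimates.} For $T\in(0,1]$ I would establish: (i) a short-time linear estimate $\norm{v}_{F^s(T)}\les\norm{v}_{E^s(T)}+\norm{\mathcal N(v)}_{N^s(T)}$; (ii) a trilinear/quintilinear estimate $\norm{\mathcal N(v)}_{N^s(T)}\les\norm{v}_{F^s(T)}^3+\norm{v}_{F^s(T)}^5$, using the improved lower bounds on the resonance function available on the short intervals and the fact that the gauge has removed the output resonance; and (iii) an energy estimate $\norm{v}_{E^s(T)}^2\les\norm{v(0)}_{H^s}^2+\norm{v}_{F^s(T)}^4+\norm{v}_{F^s(T)}^6$. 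Step (iii) is the crux: differentiating $\norm{P_k v}_{H^s}^2$ in time, the derivative-losing near-diagonal contributions are cancelled by adding quartic and sextic correction terms --- the frequency-localized analogues of the densities of $\mathcal H_1,\mathcal H_2$ in \eqref{eq:hamiltonian-2} --- and what survives, after summing in $k$, is a logarithmically divergent series $\sum_k\bra{k}(\cdots)$ that is exactly absorbed by the extra weight built into $F^s(T)$ and $E^s(T)$. I expect this cancellation-plus-reweighting to be the main obstacle; making (ii) close despite the quasilinear derivative loss is the secondary difficulty.

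\emph{A priori bound, construction, and globalization.} After a standard rescaling reducing to small data, set $Z(T):=\norm{v}_{F^s(T)}+\norm{\mathcal N(v)}_{N^s(T)}+\norm{v}_{E^s(T)}$; then (i)--(iii) combine to $Z(T)\les\norm{u_0}_{H^s}+P(Z(T))$ with $P$ a polynomial vanishing to second order at $0$, and since $T\mapsto Z(T)$ is continuous, a continuity/bootstrap argument yields $Z(T)\les\norm{u_0}_{H^s}$ for some $T=T(\norm{u_0}_{H^s})>0$ --- the a priori bound, and the reason \eqref{eq:result} holds. Since Picard iteration is unavailable (the flow is shown later not to be $C^3$), I would construct solutions by the compactness/Bona--Smith method: mollify $u_0$, solve for the smooth data (smooth solutions being available e.g.\ from \cite{Kwak2018-2} or a classical energy argument), use the a priori bound to bound the family uniformly in $F^s(T)\cap C([-T,T];H^s(\T))$, pass to a weak-$*$ limit, verify it solves the gauged equation, and invert the gauge. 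Uniqueness would follow by estimating the difference of two solutions in the weaker norm $F^0(T)$ (resp.\ $L^\infty_TL^2$) via the $F^s$-bounds on each, the top-order terms being harmless after the gauge; continuous dependence on the level set \eqref{eq:level set} then follows by the Bona--Smith scheme (difference estimate in the weak topology, plus uniform $H^s$-bound and high-frequency truncation, upgrades $H^s$-convergence of the data to $C([-T,T];H^s(\T))$-convergence of the solutions), where fixing $\gamma_1,\gamma_2$, i.e.\ $\mathcal H_0,\mathcal H_1$, is needed since the gauge and the modified energies depend on these. Finally, conservation of $\mathcal H_0,\mathcal H_1,\mathcal H_2$ bounds $\norm{u(t)}_{L^2},\norm{\px u(t)}_{L^2}$ and, through $\mathcal H_2$ and Gagliardo--Nirenberg, $\norm{\px^2 u(t)}_{L^2}$ in terms of $\gamma_1,\gamma_2$, so $\norm{u(t)}_{H^2}$ stays bounded and iterating the local theory (whose lifespan for $s\ge2$ depends only on $\norm{u(t)}_{H^2}$ by persistence of regularity) extends the solution globally.
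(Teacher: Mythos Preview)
Your overall architecture is correct and matches the paper closely: gauge transform to remove the $\norm{u}_{L^4}^4\,\px u$ resonance, short-time $X^{s,b}$ spaces with time scale $2^{-2k}$, the triple of linear/nonlinear/energy estimates, a frequency-localized modified energy, rescaling plus bootstrap for the a priori bound, Bona--Smith compactness for existence and continuous dependence (difference estimate in $F^0$, then upgrade), bi-continuity of the gauge, and globalization via $\mathcal H_0,\mathcal H_1,\mathcal H_2$. All of this is exactly what the paper does.

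The one substantive gap is the weight. You propose a ``$\bra{k}$-type'' logarithmic frequency weight and say the log-divergence is $\sum_k\bra{k}(\cdots)$, to be absorbed by that weight. That is not the mechanism the paper uses, and I do not see how yours closes at the endpoint $s=2$. The log-loss in the previous work \cite{Kwak2018-2} occurs inside the energy estimate at fixed $k$: in the high--high--low--low block (say $|k_3-k_4|\le5$, $k_1\le k_2-10\le k_4-20$) the worst case is when the \emph{low}-frequency factor carries the maximal modulation $2^{j_2}\gtrsim 2^{4k_4+k_2}$, and the resulting four-linear bound $2^{-k_4}$ is off by a factor summable only as $\sum_{k_2\le k_4}1\sim k_4$. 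Inserting a $\bra{k}$ weight uniformly in $F^s,N^s,E^s$ just moves the problem: you would be proving the estimates in the slightly smaller space $H^s\log H$, i.e.\ reproducing $s>2$ rather than reaching $s=2$. The paper's fix is different in kind: it puts the weight in the \emph{modulation} variable,
\[
\beta_{j,k}=1+2^{\frac14(j-5k)}\quad(k\ge1),
\]
so that a low-frequency piece sitting at anomalously high modulation has a strictly larger $X_k$-norm. In the bad block this buys exactly an extra factor $2^{-(k_4-k_2)}$ (since $j_2-5k_2\gtrsim 4(k_4-k_2)$ there), which makes the $k_2$-sum geometric and closes the energy estimate at $s=2$ without leaving $H^s$; the exponent $\tfrac14$ is the largest for which the nonlinear estimate (the high$\to$low output cases in Lemmas~\ref{lem:nonres3}--\ref{lem:nonres5}) still holds.

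Two minor corrections. First, the modified energy the paper uses adds only \emph{quartic} corrections (two $v$'s and two $w$'s, with specific $\psi_k,\chi_k$ and $n^{-1}$ symbols), chosen so that after differentiating in time the worst high--low terms combine into a commutator with a two-derivative gain (Lemma~\ref{lem:commutator1}); they are not literal frequency-localizations of the densities of $\mathcal H_1,\mathcal H_2$, and there are no sextic corrections in $E_k$. Second, the gauge you wrote only removes the quintic $\norm{u}_{L^4}^4$-resonance; the cubic resonances are handled separately by absorbing the conserved quantities $\norm{u_0}_{L^2}^2$ and $\norm{\px u_0}_{L^2}^2+\norm{u_0}_{L^4}^4$ into the linear dispersion $\mu(n)=n^5+d_1n^3+d_2n$, which is precisely why the level-set constraint \eqref{eq:level set} is needed for continuity of the flow (you note this at the end, but your earlier description of the gauge overstates what it removes). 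A residual diagonal cubic resonance $-20in^3|\wh v(n)|^2\wh v(n)$ remains after all this; it is harmless for the energy (pure imaginary) and trivial for the nonlinear estimate.
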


\begin{rem}
A function $\eta$ in \eqref{eq:result} is any cut-off function in $C^{\infty}(\R)$ with $\mathrm{supp}(\eta) \subset [-T,T]$, and the resolution space $F^s(T)$ will be defined in Section \ref{sec:ftn-space}.
\end{rem}

As a consequence of Theorem \ref{thm:main} in addition to Remark \ref{rem:GlobalEnergy}, we have the global well-posedness of \eqref{eq:5mkdv-gen} with \eqref{eq:coefficient constraint} in the energy space.
\begin{cor}\label{cor:GWP}
The Cauchy problem of \eqref{eq:5mkdv-gen} with \eqref{eq:coefficient constraint} is globally well-posed in the energy space $H^2(\T)$.
\end{cor}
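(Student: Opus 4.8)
The plan is the standard conservation-law bootstrap: Theorem \ref{thm:main} is a local statement whose existence time is governed only by the level-set parameters $\gamma_1,\gamma_2$ in \eqref{eq:level set}, and the three Hamiltonians in \eqref{eq:hamiltonian-2} upgrade this into an unconditional a priori bound that can be iterated indefinitely. Given $u_0 \in H^2(\T)$, set $\gamma_1 := \|u_0\|_{L^2(\T)}^2$ and $\gamma_2 := \|\px u_0\|_{L^2(\T)}^2 + \|u_0\|_{L^4(\T)}^4$, so that $u_0$ lies on the level set \eqref{eq:level set}; Theorem \ref{thm:main} then yields a solution on $[-T_0,T_0]$ with $T_0 = T_0(\gamma_1,\gamma_2) > 0$, and we may assume $T_0$ is nonincreasing in each argument. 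Since $u(\pm T_0) \in H^2(\T)$ again, the only points to verify for a global iteration are that $\gamma_1(t),\gamma_2(t)$, and $\|u(t)\|_{H^2(\T)}$ remain bounded on the interval of existence.

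For the a priori bounds, recall that $\mathcal H_0,\mathcal H_1,\mathcal H_2$ are conserved along the flow of \eqref{eq:5mkdv-gen} with \eqref{eq:coefficient constraint}, as observed in the remark following \eqref{eq:hamiltonian-2}. Conservation of $\mathcal H_0$ gives $\|u(t)\|_{L^2}^2 \equiv \gamma_1$. From the identity $\|\px u(t)\|_{L^2}^2 = 2\mathcal H_1[u_0] - \tfrac{c_1}{40}\|u(t)\|_{L^4}^4$, combined with the Gagliardo--Nirenberg inequality $\|f\|_{L^4(\T)}^4 \lesssim \|f\|_{L^2(\T)}^3\|f\|_{H^1(\T)}$ and a Young-inequality absorption, one obtains $\|\px u(t)\|_{L^2}^2 \le C_1(\gamma_1,\mathcal H_1[u_0])$; hence also $\|u(t)\|_{L^4}^4 \le C_1'$ and $\gamma_2(t) \le C_2(\gamma_1,\mathcal H_1[u_0])$ for all $t$. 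Finally, from $\|\px^2 u(t)\|_{L^2}^2 = 2\mathcal H_2[u_0] - \tfrac{c_1}{4}\int_\T u^2(\px u)^2\,dx - \tfrac{c_1^2}{800}\int_\T u^6\,dx$, the embedding $H^1(\T)\hookrightarrow L^\infty(\T)$ bounds the last two integrals by $C\|u(t)\|_{H^1}^4 \lesssim (C_1+\gamma_1)^2$, so that $\|u(t)\|_{H^2(\T)} \le R$ for some $R = R(\mathcal H_0[u_0],\mathcal H_1[u_0],\mathcal H_2[u_0])$ independent of $t$.

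With these bounds in hand, set $T_* := T_0(\gamma_1,C_2) > 0$. Solving on $[-T_*,T_*]$, restarting from $u(\pm T_*)$ — which still satisfies $\|u(\pm T_*)\|_{L^2}^2 = \gamma_1$ and $\gamma_2(\pm T_*) \le C_2$ — and gluing via the local uniqueness on the overlaps extends the solution to $[-2T_*,2T_*]$; by induction one obtains a solution on $[-NT_*,NT_*]$ for every $N\in\mathbb N$, hence a global solution $u \in C(\R;H^2(\T))$ with $\sup_t \|u(t)\|_{H^2} \le R$, which on each time interval belongs to the resolution space as in \eqref{eq:result}. Global uniqueness follows since the set of times at which two solutions with the same datum agree is open (local uniqueness), closed (continuity in $H^2$), and nonempty. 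For continuous dependence on $[-NT_*,NT_*]$, the flow map factors as a composition of $N$ local flow maps, each of which, by Theorem \ref{thm:main}, is continuous on the level set $\{\,v : \|v\|_{L^2}^2 = \gamma_1,\ \|\px v\|_{L^2}^2 + \|v\|_{L^4}^4 \le C_2\,\}$, and conservation (and $H^2$-continuity) of $\mathcal H_0$ and $\mathcal H_1$ keeps every intermediate iterate of nearby data inside this set, so the composition is continuous. This gives global well-posedness in $H^2(\T)$.

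The one genuinely delicate point is the propagation of the continuous-dependence statement: Theorem \ref{thm:main} provides continuity of the flow only on a fixed level set rather than on a full $H^2$-ball, so one must use that $\mathcal H_0$ and $\mathcal H_1$ are simultaneously conserved and continuous on $H^2(\T)$ to confine nearby orbits to a common level set, together with the fact that a finite composition of maps continuous on that set is again continuous. The a priori estimates and the time-stepping are otherwise routine.
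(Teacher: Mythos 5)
Your proposal is correct and is essentially the argument the paper intends: Corollary \ref{cor:GWP} is stated as an immediate consequence of Theorem \ref{thm:main} together with Remark \ref{rem:GlobalEnergy}, i.e.\ exactly the conservation-law bootstrap (uniform $H^2$ bound from $\mathcal H_0,\mathcal H_1,\mathcal H_2$, iteration of the local theory, and propagation of the level set via the conserved quantities) that you write out. One small correction: by Definition \ref{def:WP} the local existence time depends on $\|u_0\|_{H^2(\T)}$, not only on $(\gamma_1,\gamma_2)$, so the uniform time step should be taken as $T_*=T_0(R)$ with $R$ your a priori $H^2$ bound — immaterial here, since you derive $R$ from $\mathcal H_2$ anyway.
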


\begin{rem}
The Miura transform and suitable scaling-rescaling arguments ensure that Corollary \ref{cor:GWP} implies the global well-posedness of the fifth-order KdV equation
\begin{equation}\label{eq:5kdv}
	\pt u - \px^5 u + a_1 u_x \partial_x^2u + a_2 u\px^3u + a_3  u^2 u_x= 0
\end{equation}
in $H^1(\T)$, for the particular cases when $a_1 = \frac{c_1}{2}$, $a_2 = \frac{c_1}{4}$ and $a_3 = -\frac3{160}c_1^2$, $c_1 \ge 0$. 
%
%
%
%
%
%
%
This improves the previous results \cite{Kwak2016, Kwak2018-2}, and also extends the result in \cite{KM2018} to more general nonlinearities of the fifth-order KdV equation (but in the higher regularity Sobolve space).
\end{rem}

\begin{rem}
In the proof of Theorem \ref{thm:main}, two Hamiltonians $\mathcal H_0$ and $\mathcal H_1$ have to be conserved in time, thus coefficients in \eqref{eq:5mkdv-gen} are restricted only on the first one ($\frac{c_1}4 = c_2 = c_3$) in \eqref{eq:coefficient constraint}. This allows us to extend Theorem \ref{thm:main} for more general equations than \eqref{eq:5mkdv-gen} with \eqref{eq:coefficient constraint}.
\end{rem}

\begin{rem}\label{rem:integrable is enough}
The main analysis tool for the proof of Theorem \ref{thm:main} is not relied on variations of coefficients, thus it suffices to consider the Cauchy problem for the integrable equation \eqref{eq:5mkdv_integrable} which corresponds to the coefficients as \eqref{eq:coefficient constraint} with $c_1 = 40$.
\end{rem}

\begin{rem}
The full restriction \eqref{eq:coefficient constraint} of coefficients are necessary to obtain Corollary \ref{cor:GWP}. 
\end{rem}

Note that the result in Theorem \ref{thm:main} is based on a compactness method: one passes to the limit by compactness arguments on some approximate solutions (see Appendix \ref{sec:Appendix A}). On the other hand, another natural way to solve the Cauchy problem is to perform the Picard iterative method on the integral equation \eqref{Duhamel}, which provides a stronger result than one in Theorem \ref{thm:main} (particularly, the flow map becomes real analytic). However, due to the presence of non-trivial resonances, one can not apply the Picard iterative method on this equation. 
\begin{thm}\label{thm:C^3 illposed}
Let $s > 0$. Then, there does not exist $T > 0$ such that \eqref{eq:5mkdv-gen} admits a unique local solution on $[-T, T]$ and the flow map $u_0 \mapsto u(t)$, $t \in [T,T]$, for \eqref{eq:5mkdv-gen} is $C^3$-differentiable at the origin from $H^s(\T)$ to $H^s(\T)$.
\end{thm}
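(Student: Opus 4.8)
The plan is to show that $C^3$‑regularity of the flow map would force a bound on its third Taylor coefficient that a single low‑plus‑very‑high frequency example violates. Assume, for contradiction, that for some $s>0$ and some $T>0$ the equation \eqref{eq:5mkdv-gen} admits unique local solutions on $[-T,T]$ and the flow map $\Phi_t:u_0\mapsto u(t)$ is $C^3$ at the origin from $H^s(\T)$ to $H^s(\T)$ for all $|t|\le T$. Differentiating the Duhamel formula \eqref{Duhamel} (equivalently the equation) three times in the data at $0$ and using that the nonlinearity $F=F_3+F_5$ of \eqref{eq:5mkdv-gen}, with cubic part $F_3(u)=c_1u\px u\px^2u+c_2u^2\px^3u+c_3(\px u)^3$ and quintic part $F_5(u)=c_4u^4\px u$, has no linear or quadratic part, one gets $D\Phi_t(0)=S(t)$, $D^2\Phi_t(0)=0$, and $D^3\Phi_t(0)[\phi,\phi,\phi]=-6\int_0^t S(t-\tau)F_3(S(\tau)\phi,S(\tau)\phi,S(\tau)\phi)\,d\tau$. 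Restricting to smooth band‑limited real data $\phi$ — for which the solution is classical, smooth in the scaling parameter, and all manipulations are elementary — $C^3$‑regularity therefore forces
\[ \sup_{|t|\le T}\norm{A_3(t)}_{H^s(\T)}\le C(T)\,\norm{\phi}_{H^s(\T)}^3,\qquad A_3(t):=-\int_0^t S(t-\tau)\,F_3\big(S(\tau)\phi\big)\,d\tau , \]
with $F_3(v):=F_3(v,v,v)$. It thus suffices to build band‑limited $\phi_N$ with $\norm{\phi_N}_{H^s(\T)}\sim1$ and $\norm{A_3(t)}_{H^s(\T)}\to\infty$ for each fixed $t\ne0$.

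Using $\wh{S(t)\phi}(n)=e^{itn^5}\wh\phi(n)$,
\[ \wh{A_3}(t,n)=-e^{itn^5}\!\!\!\sum_{n_1+n_2+n_3=n}\!\!\!M(n_1,n_2,n_3)\,\wh\phi(n_1)\wh\phi(n_2)\wh\phi(n_3)\int_0^t e^{i\tau\Omega(n_1,n_2,n_3)}\,d\tau , \]
where $\Omega(n_1,n_2,n_3)=n_1^5+n_2^5+n_3^5-(n_1+n_2+n_3)^5$ and $M$ is the symmetrized multiplier of $F_3$. A short computation with elementary symmetric polynomials (using $\tfrac{c_1}{4}=c_2=c_3$, which holds for \eqref{eq:5mkdv_integrable}) gives the closed form
\[ M(n_1,n_2,n_3)=-\tfrac{ic_1}{12}\,(n_1{+}n_2{+}n_3)\Big((n_1{+}n_2{+}n_3)^2-(n_1n_2{+}n_2n_3{+}n_3n_1)\Big), \]
while the identity $(a{+}b{+}c)^5-a^5-b^5-c^5=5(a{+}b)(b{+}c)(c{+}a)(a^2{+}b^2{+}c^2{+}ab{+}bc{+}ca)$ shows
\[ \Omega(n_1,n_2,n_3)=-5\,(n_1{+}n_2)(n_2{+}n_3)(n_3{+}n_1)\Big((n_1{+}n_2{+}n_3)^2-(n_1n_2{+}n_2n_3{+}n_3n_1)\Big). \]
Now take $\phi_N$ real with $\wh{\phi_N}$ supported on $\{\pm1,\pm N\}$, $\wh{\phi_N}(\pm1)=1$ and $\wh{\phi_N}(\pm N)=N^{-s}$, so that $\norm{\phi_N}_{H^s(\T)}\sim1$ uniformly in $N$. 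At the output frequency $n=N$ the triple $(1,-1,N)$ and its five permutations are resonant ($\Omega=0$, since $1+(-1)=0$), hence $\int_0^t e^{i\tau\Omega}\,d\tau=t$; since $M(1,-1,N)=-\tfrac{ic_1}{12}N(N^2+1)$, these six triples produce a secular term of size $\gtrsim|c_1|\,N^{3-s}\,|t|$ in $\wh{A_3}(t,N)$ — this is precisely the ``loss of three derivatives'' interaction in which $\norm{\phi_N}_{L^2}^2$ (essentially the low‑frequency mass) multiplies $\px^3S(\tau)\phi_N$, hidden inside $F_3$.

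Finally one checks that this secular term is neither cancelled nor beaten. At frequency $n=N$ the only other admissible triple is $(N,N,-N)$ (up to permutation), which is again resonant but carries amplitude $|\wh{\phi_N}(N)|^2|\wh{\phi_N}(-N)|\sim N^{-3s}$, hence contributes only $O(N^{3-3s}|t|)=o(N^{3-s}|t|)$; all other output frequencies of $\wh{A_3}(t,\cdot)$ lie near $O(1)$, $N\pm2$, $2N\pm1$, or $3N$, and the high ones are non‑resonant with $|\Omega|\gtrsim N^4$, so the gain $|\int_0^t e^{i\tau\Omega}\,d\tau|\le2|\Omega|^{-1}$ makes their contributions to $\norm{A_3(t)}_{H^s(\T)}$ tend to $0$, while the low ones have size $O(N^{2-2s}|t|)$ at frequency $O(1)$ and so are $o(N^3|t|)$ in $H^s(\T)$. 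Hence, for $N$ large, $\norm{A_3(t)}_{H^s(\T)}\ge\bra{N}^{s}|\wh{A_3}(t,N)|\gtrsim|c_1|\,N^3\,|t|$, contradicting $\sup_{|t|\le T}\norm{A_3(t)}_{H^s(\T)}\le C(T)\norm{\phi_N}_{H^s(\T)}^3\lesssim C(T)$ as $N\to\infty$; so no such $T$ exists. The main obstacle here is not conceptual but organizational: computing the symmetrized cubic symbol $M$, recognizing that it shares the quadratic factor $(n_1{+}n_2{+}n_3)^2-(n_1n_2{+}n_2n_3{+}n_3n_1)$ with the resonance function $\Omega$, and then carrying out the bookkeeping just described so that the frequency‑$N$ secular term genuinely dominates. (For \eqref{eq:5mkdv-gen} with general coefficients the same scheme applies with $M(1,-1,N)=-i\big(\tfrac{c_2}{3}N^3+(\tfrac{c_1}{3}-c_3)N\big)$, which is nonzero unless $c_2=0$ and $3c_3=c_1$; in that degenerate case every cubic resonance vanishes and one repeats the argument at the level of the quintic resonance, i.e.\ the fifth Picard iterate.)
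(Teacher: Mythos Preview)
Your proposal is correct and follows essentially the same approach as the paper: differentiate the Duhamel formula three times at the origin, take band-limited data supported at $\pm1$ and $\pm N$ with $\wh{\phi}(\pm1)=1$, $\wh{\phi}(\pm N)=N^{-s}$, and exploit the resonant high--low--low interaction $(n_1,n_2,n_3,n)=(1,-1,N,N)$ to produce a secular term of size $\sim N^3|t|$ in $H^s$; this is exactly the example the paper singles out in Remark~\ref{rem:C^3diffexample}. Your write-up is in fact more complete than the paper's sketch --- you compute the symmetrized cubic symbol, verify that the competing contributions (the $(N,N,-N)$ resonance and the non-resonant outputs at $N\pm2$, $2N\pm1$, $3N$, and $O(1)$) are subdominant, and you correctly flag the degenerate case $c_2=0$, $c_1=3c_3$ in which the cubic resonances cancel and only a $C^5$ obstruction survives.
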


The failure of the $C^3$-differentiability of the flow map is due to the presence of non-trivial resonances as mentioned. On the other hand, renormalizing the equation \eqref{eq:5mkdv-gen} with \eqref{eq:coefficient constraint} removes all non-trivial resonances. Indeed, using conservation laws ($\mathcal H_0$ and $\mathcal H_1$) and the nonlinear transformation defined by 
\begin{equation*}
	\NT(u)(t,x) = v(t,x) := \frac{1}{\sqrt{2\pi}}\sum_{n \in \Z} e^{i(nx - 20n \int_0^t \norm{u(s)}_{L^4}^4 \; ds)}\wh{u}(t,n),
\end{equation*}
the equation \eqref{eq:5mkdv_integrable} is renormalized\footnote{See Section \ref{sec:reformulation} for the details.} by
\begin{equation}\label{eq:5mkdv3_0}
	\begin{split}
		\pt\wh{v}(n) - i(n^5 + d_1n^3 + d_2n)\wh{v}(n)=&~{}-20in^3|\wh{v}(n)|^2\wh{v}(n)\\
		&+10in \sum_{\N_{3,n}} \wh{v}(n_1)\wh{v}(n_2)n_3^2\wh{v}(n_3) \\
		&+10in \sum_{\N_{3,n}} \wh{v}(n_1)n_2\wh{v}(n_2)n_3\wh{v}(n_3)\\
		&+6i n\sum_{\N_{5,n}} \wh{v}(n_1)\wh{v}(n_2)\wh{v}(n_3)\wh{v}(n_4)\wh{v}(n_5),
	\end{split}
\end{equation}
with some constants $d_1, d_2$ (depending on conserved quantities). Nevertheless, since the nonlinearity is still strong than its dispersion effects, we cannot solve the renormalization of \eqref{eq:5mkdv3_0} by using Picard iteration. Hence, we get the following results.
\begin{thm}\label{thm:failure}
Let $s > 0$. Then, there does not exist $T > 0$ such that \eqref{eq:5mkdv3_0} admits a unique local solution on $[-T, T]$ and the flow map $v_0 \mapsto v(t)$, $t \in [T,T]$, for \eqref{eq:5mkdv3_0} is $C^5$-differentiable at the origin from $H^s(\T)$ to $H^s(\T)$.
\end{thm}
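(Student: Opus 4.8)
The plan is to argue by contradiction on the multilinear (Picard) expansion of the flow map, one order above the argument behind Theorem~\ref{thm:C^3 illposed}. Suppose that for some $T>0$ the flow $\Phi_t\colon v_0\mapsto v(t)$ for \eqref{eq:5mkdv3_0} exists and is $C^5$ at the origin, uniformly for $t\in[-T,T]$. Iterating the Duhamel formulation of \eqref{eq:5mkdv3_0}, the degree-$5$ part of the Taylor expansion of $\Phi_t$ at $0$ is then a bounded symmetric $5$-linear map $A_5(t)$, so that $\|A_5(t)[v_0,\dots,v_0]\|_{H^s(\T)}\lesssim\|v_0\|_{H^s(\T)}^5$ for all $t\in[-T,T]$. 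Explicitly, $A_5$ collects exactly the $5$-homogeneous contributions in $v_0$: the quintic term $6in\sum_{\N_{5,n}}$ applied to five copies of the free evolution $S(t)v_0$; and each cubic term — the diagonal term $-20in^3|\wh v(n)|^2\wh v(n)$ and the two $\N_{3,n}$ sums — applied to two copies of $S(t)v_0$ and one copy of the (cubic-in-$v_0$) second Picard iterate, summed over the ways of inserting the inner cubic. Since the nonlinearity of \eqref{eq:5mkdv3_0} is odd, the even Taylor coefficients vanish, so the fifth order is the next one to probe after the third.

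To contradict this bound at a fixed small time $t_0\in(0,T)$ I would use the ``low and very high frequency'' data announced in the introduction: a one-parameter family $v_0=v_0^{(N)}$, normalised so that $\|v_0^{(N)}\|_{H^s}\sim1$, supported on a single very high frequency $N$ together with a fixed finite bundle of low frequencies. On the Fourier side each iterated Duhamel integral is an oscillatory integral in the time variables whose phases are the resonance functions built from $\omega(n):=n^5+d_1n^3+d_2n$; because $\omega$ is odd, every pair $\{k,-k\}$ occurring among the interacting frequencies cancels in such a resonance function. The construction will select, for a suitable output frequency, admissible configurations $(n_1,\dots,n_5)$ summing to it and lying in (or routed through) $\N_{3,n}$, $\N_{5,n}$, and the diagonal term, for which the total five-wave resonance vanishes; one of the two time integrations in the nested contributions then yields a factor $\sim t_0$, while the remaining integration, once summed over an intermediate frequency, produces a divergence in $N$ — plausibly the same logarithmic divergence that the weighted short-time spaces are designed to absorb on the well-posedness side. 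Tracking the derivative factors $n$, $n_j^2$, $n_j^3$ coming from \eqref{eq:5mkdv3_0} together with the normalisation cost $N^{-s}$ of each high-frequency input, the corresponding Fourier coefficient of $A_5(t_0)[v_0^{(N)},\dots,v_0^{(N)}]$ is bounded below by a quantity tending to $\infty$ with $N$, contradicting the $C^5$ bound. Since $C^5$ implies $C^3$, the range $0<s<\tfrac32$ is in any case already handled by the $C^3$ obstruction coming directly from the surviving diagonal term $-20in^3|\wh v(n)|^2\wh v(n)$, so the substance of the fifth-order argument lies in the remaining range.

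The reason the argument must be run at fifth order is structural: the renormalisation $\NT$ uses $\mathcal H_0$ and $\mathcal H_1$ precisely to remove the non-trivial cubic resonances responsible for the $C^3$ failure in Theorem~\ref{thm:C^3 illposed}, so for large $s$ — in particular $s\ge2$, where \eqref{eq:5mkdv3_0} is genuinely (continuously) well-posed by Theorem~\ref{thm:main} — the obstruction only reappears through the quintic nonlinearity and through the cubic-into-cubic cascades that survive renormalisation, notably those running through $-20in^3|\wh v(n)|^2\wh v(n)$.

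I expect the main obstacle to be the construction itself: one must simultaneously extract a non-negative power of $N$ from the derivative symbols \emph{and} a divergent sum over the intermediate frequency, while keeping the relevant resonance functions vanishing (or $O(1)$), respecting admissibility for $\N_{3,n}$ and $\N_{5,n}$, and using as few high-frequency inputs as possible. These constraints are in tension — a vanishing resonance forces cancelling high-frequency pairs, which both cost factors $N^{-s}$ and tend to displace the derivative-bearing frequencies downwards — and isolating a configuration that yields genuine growth uniformly in $s>0$ is the delicate point. A secondary, routine matter is to check that the selected Fourier coefficient is not destroyed by cancellation against the remaining contributions to it, which one handles by isolating a single dominant interaction and bounding the rest crudely.
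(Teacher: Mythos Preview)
Your framework is right — Tzvetkov-type contradiction on the fifth Taylor coefficient, high-plus-low data, and the recognition that renormalisation has removed the cubic high-low resonance so one must go to order five — but the mechanism you speculate about is not the one that works, and this is the substantive gap. You look for a logarithmic divergence from summing over an intermediate frequency; the paper instead obtains \emph{polynomial} growth $tN^2$ from a \emph{single} quintic-resonant sextuple, with no summation at all. The device you are missing is a normal-form step: integrate by parts in $t'$ in the Duhamel integral for the worst cubic term $10in\sum_{\N_{3,n}}\wh v(n_1)\wh v(n_2)n_3^2\wh v(n_3)$, trading a factor $\phi(\mathbf n)^{-1}$ (the cubic resonance function, of size $\sim N^4$ in the relevant high-low configuration) for the insertion of another cubic nonlinearity into the $n_3$ slot, which brings a fresh multiplier $n_3\cdot n_{33}^2$. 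With data supported on $\{-2,-1,1,2,N-1,N\}$ (four low modes and two adjacent high ones, not a single low pair) one then selects $\mathbf m_0=(n,n_1,n_2,n_{31},n_{32},n_{33})=(N,2,-1,-2,1,N)$: here $(2,-1,N-1)\in\N_{3,N}$ and $(-2,1,N)\in\N_{3,N-1}$ are both admissible, while the quintic phase $\varphi(\mathbf m_0)=-\mu(N)+\mu(2)+\mu(-1)+\mu(-2)+\mu(1)+\mu(N)$ vanishes exactly by oddness of $\mu$. The resulting coefficient at $n=N$ is $\sim t\cdot \tfrac{N}{N^4}\cdot N^3\cdot N^2\cdot N^{-s}=tN^{2-s}$, so its $H^s$-norm is $\sim tN^2$, diverging for every $s>0$. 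The remaining contributions (the boundary terms from the normal form, the terms where $\partial_{t'}$ lands on $n_1$ or $n_2$, the other cubic symbol, and the genuine quintic term) are then checked to be $o(tN^2)$, which is the routine part.

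Your side observation that the surviving diagonal term $-20in^3|\wh v(n)|^2\wh v(n)$ already gives a $C^3$ obstruction for $s<\tfrac32$ is correct (and the paper records the analogous $C^5$ contribution $t^2N^{6-4s}$ from that term), but this does not reach $s\ge\tfrac32$, let alone the well-posed range $s\ge2$ where the $C^5$ failure is the genuine content. The single-configuration argument above is what covers all $s>0$ uniformly; without it your proposal is a programme rather than a proof, and the speculation about logarithmic sums would lead you away from the actual construction.
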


\begin{rem}
The proof of Theorem \ref{thm:C^3 illposed} is analogous to and simpler than the proof of Theorem \ref{thm:failure}, thus we only provide a precise proof for Theorem \ref{thm:failure} in Section \ref{sec:illposed} and Appendix \ref{appendix:calculation}, but we will introduce an example to break the $C^3$-differentiability of the flow map (see Remark \ref{rem:C^3diffexample}).
\end{rem}

\subsection{About literature}

There are numerous results regarding the well-posedness of KdV and mKdV equations (including higher order equations) not only on the periodic domain but also the non-periodic domain, and here we only introduce few but important works via analytic methods in historical order.

\medskip

The KdV equation \eqref{eq:KdV} on $\R$ has been firstly studied by Bona and Smith \cite{BS1975}, Bona and Scott \cite{BS1976}, Saut and Temam \cite{ST1976}, and Kato \cite{Kato1979}. The authors (independently) proved that the KdV equation is locally and globally well-posed in $H^s(\R)$, $s > \frac32$ and $s \ge 2$, respectively, and up to this moment, dispersive structures of the KdV equation were ignored (this argument is also available for mKdV as well as higher-order equations regardless of boundary conditions). Later, Kenig, Ponce, and Vega \cite{KPV1991} handled $u_x$ via the maximal function estimate, and thus proved the local well-posedness in $H^s(\R)$, $s > \frac34$, which, in addition to the energy conservation law, guarantees the global well-posedness in the energy space $H^1(\R)$. By developing the dispersive nature introduced by Bourgain \cite{Bourgain1993}, Kenig, Ponce and Vega \cite{KPV1996} established the sharp bilinear estimate which guarantees that the KdV equation is locally (resp. globally) well-posed in the negative Sobolev space $H^s(\R)$, $s > -\frac34$ (resp. in $L^2(\R)$). The Sobolev regularity index $s = -\frac34$ is optimal in the sense that the KdV equation no longer shows a semilinear nature below this regularity. The global well-posedness result has been extended to the same regularity as the local one by Colliander, Keel, Staffilani, Takaoka, and Tao \cite{CKSTT2003}. We also refer to \cite{Guo2009, Kishimoto2009, Liu2015} and references therein for more results.

\medskip

Compared to the KdV equation, the mKdV and higher-order equations received less attention. Kenig, Ponce, and Vega \cite{KPV1993} (we do not mention here former local well-posedness results via compactness method) proved that the mKdV equation \eqref{eq:mkdv} is locally (resp. globally) well-posed in $H^s(\R)$, $s \ge \frac14$ (resp. in $H^1(\R)$). Later, its simple proof was given in \cite{KPV1996}, and the global well-posedness with the same regularity as the local one was obtained in \cite{CKSTT2003}. The fifth-order KdV equation \eqref{eq:5kdv} on $\R$ has first been studied by Ponce \cite{Ponce1993}. Ponce proved that \eqref{eq:5kdv} is globally well-posed in $H^s(\R)$, $s \ge 4$, and Kwon \cite{Kwon2008-1} improved the local (resp. global) result for $s > \frac52$ (resp. $s \ge 3$). Later, the global well-posedness result in the energy space was proven in \cite{GKK2013, KP2015}. For the fifth-order mKdV \eqref{eq:5mkdv-gen} on $\R$, Kwon \cite{Kwon2008-2} proved the local (resp. global) well-posedness in $H^s(\R)$, $s \ge \frac34$ (resp. $s \ge 1$). We also refer to \cite{KP2016} for higher order equations on $\R$.

\medskip

\begin{rem}\label{rem:Killip}
Recently, Killip and Visan \cite{KV2019} proved the global well-posedness of the KdV equation in $H^{-1}(\R)$. They provided a new, general method of low regularity local theory for integrable equations, informed by the existence of commuting flows. With the strong ill-posedness result established by Molinet \cite{Molinet2011}, the initial value problem for the KdV equation has been completely resolved. Nevertheless, the well-posedness problem via analytic methods (without using integrable structures) in $H^s(\R)$, $-1 \le s < -\frac34$, is still left an interesting open problem. An analogous argument for the fifth-order KdV can be found in \cite{BKV2021}.
\end{rem}

In a strong contrast to the non-periodic problems, the periodic problems have less developed due to the lack of local smoothing properties. To overcome this obstacle, Bourgain \cite{Bourgain1993} introduced the \emph{Fourier restriction norm method} (or dispersive smoothing properties) for periodic solutions, and proved that KdV \eqref{eq:KdV} and mKdV \eqref{eq:mkdv} are globally well-posed in $L^2(\T)$ and $H^1(\T)$, respectively. Later, Kenig et al. \cite{KPV1996} improved the local results for KdV and mKdV in $H^{-\frac12}(\T)$ and $H^{\frac12}(\T)$, respectively, and Colliander et al. \cite{CKSTT2003} showed the global well-posedness of these equations at the same regularities. Takaoka and Tsutsumi \cite{TT2004} (see also \cite{NTT2010}) particularly improved the local theory for mKdV in $H^s(\T)$, $s > \frac13$, and Molinet, Pilod, and Vento \cite{MPV2018} recently obtained unconditional well-posedness in $H^s(\T)$, $s \ge \frac13$.

\medskip 

The fifth-order KdV and mKdV equations under the periodic setting have firstly (as far as we know) studied by the first author \cite{Kwak2016, Kwak2018-2}. In these works, it is shown that the fifth-order KdV and mKdV are locally well-posed in $H^2(\T)$ and $H^s(\T)$, $s > 2$, respectively. In particular, the different nature (quasilinear nature) of the fifth-order mKdV equation under the periodic setting from the non-periodic problem (semilinear nature) has been observed. 

\begin{rem}\label{rem:Killip2}
Kappeler and Topalov \cite{KT2006, KT2005} proved that KdV and mKdV are globally well-posed in $H^{-1}(\T)$ and $L^2(\T)$, respectively, via theory of complete integrability. 
\end{rem}

\bigskip

\subsection{Idea of proofs}\label{sec:idea}

In the view of Theorem 1.5 of \cite{Kwak2018-2}, the trilinear estimates
\begin{align}\label{eq:failure-xsb}
\|uv \px^3 w\|_{X^{s,b-1}_{\tau - n^5}} \les \|u\|_{X^{s,b}_{\tau-n^5}}\|v\|_{X^{s,b}_{\tau-n^5}}\|w\|_{X^{s,b}_{\tau-n^5}}
\end{align}
fail for any $s,b \in \R$. Note that estimates \eqref{eq:failure-xsb} fail only periodic domain due to the lack of the local smoothing effect for linear profiles. Among the contributions of frequencies, {\em high$\times$low$\times$low $\Rightarrow$ high} interaction components, where the third order derivative is taken in the highest frequency, is critical in the counterexample of \eqref{eq:failure-xsb}. This observation shows that the standard Fourier restriction norm method is not available for the Cauchy problem for \eqref{eq:5mkdv3_0}. Moreover, it leads to the lack of uniform continuity of the flow (see Theorems \ref{thm:C^3 illposed} and \ref{thm:failure} for partial results). 
To overcome these obstacles, we employ the short time Fourier restriction norm method to prove Theorem \ref{thm:main}. Ionescu, Kenig, and Tataru \cite{IKT2008} firstly introduced this idea to establish the global well-posedness of the KP-I equation in the energy space. The main ingredients of the proof are the following estimates:
\begin{align}
	\|v\|_{F^s(T)} &\les \|v\|_{E^s(T)} + \|\mathfrak{N} (v)\|_{N^s(T)},\label{eq:linear}\\
	\|\mathfrak{N}(v)\|_{N^s(T)} &\les \|v\|_{F^s(T)}^3 + \|v\|_{F^s(T)}^5, \label{eq:nonlinear-esti}\\
	\|v\|_{E^s(T)} &\les \left( 1+ \|v_0\|_{H^s(\T)}^2\right)\|v_0\|_{H^s(\T)}^2 + \left( 1+ \|v\|_{F^s(T)}^2 + \|v\|_{F^s(T)}^4\right) \|v\|_{F^s(T)}^4,\label{eq:energy-esti}
\end{align}
in appropriate function spaces $F^s(T), N^s(T)$, $E^s(T)$, which will be precisely defined in Section \ref{sec:ftn-space}, where $\mathfrak{N}(v)$ is a collection of nonlinearities in \eqref{eq:5mkdv3_0}. 

\medskip

An appropriate time scale for \eqref{eq:5mkdv3_0} can be taken comparable to $(\mbox{frequency})^{-2}$. To see this, we give an intuitive observation:
Let $N \in \Z_+$ and $s \in \R$ be fixed, and let $f := f_1 + f_N$ be given as an initial data, where $\hat{f}_j(n) = n^{-s}\delta(n-j)$\footnote{Here $\hat{f}(n)$ is the Fourier coefficient of $f$ precisely defied in Section \ref{sec:2}.}, for $j=1, N$. Note that $\|f\|_{H^s(\T)} \sim 1$. Under the philosophy of the local theory, suppose for a suitable time cut-off function $\rho$  that  $u(t,x) \approx u_1(t,x) + u_N(t,x) =: \rho(t/T)e^{t\px^5}f_1 + \rho(t/T)e^{t\px^5}f_N$ is a solution that evolves as a linear solution in a very short time interval $[0,T]$. Then we must have (particularly, we focus on the worst case)
\[\left\| \rho(t/T)\int_0^t e^{(t-s)\px^5}(u_1^2 \px^3u_N)(s) \; ds \right\|_{H^s(\T)} \lesssim 1.\]
The left-hand side can be further (but roughly) reduced by
\begin{equation}\label{eq:heuristic}
N^{s+3}\sum_{j \ge 1} 2^{j/2}\left\|\eta_j(\tau - n^5) (\tau - n^5 + iT^{-1})^{-1}\widehat{u_1^2 \px^3u_N} \right\|_{L_{\tau}^2\ell_n^2},
\end{equation}
due to \eqref{eq:small data1.2} (see Appendix A in \cite{GKK2013} for its proof)\footnote{Here the function $\eta_j$ is a Littlewood-Paley multiplier for modulations, see Section \ref{sec:2.1} for the precise definition.}. By Lemma \ref{lem:prop of Xk}, in addition to Lemma \ref{lem:tri-L2} (particularly, the worst case appears when $j_{sub} \le j_{max} - 10$, but we always have $2^{j_{sub}} \ge T^{-1}$), we conclude 
\[\eqref{eq:heuristic} \lesssim N^{s+3}N^{-2}T^{\frac12}N^{-s} \lesssim 1,\]
as long as $T \sim N^{-2}$. 

\medskip

To compensate the nonlinear estimates \eqref{eq:nonlinear-esti}, we need to control the energy of solutions in \eqref{eq:linear} (particularly, we need energy-type estimates \eqref{eq:energy-esti}). A direct computation, however, cannot give such estimates \eqref{eq:energy-esti} due to the derivative loss in the {\em high$\times$low$\times$low $\Rightarrow$ high} interaction component, for instance, we have
\begin{equation}\label{eq:energyfail}
\begin{aligned}
	&\sum_{0\le k_1,k_2 \le k-10} \left| \sum_{n, \overline{\N}_{3,n}}  \int_0^{t_k} \chi_k(n) [ \chi_{k_1}(n_1) \wh{v}(n_1)\chi_{k_2}(n_2) \wh{v}(n_2) n_3^3 \wh{v}(n_3) ] \chi_k(n)\wh{v}(n)\right| \\
	&\hspace{1cm}\les \|v\|_{F^{\frac14+}(T)}^2 \sum_{|k'-k|\le 5} 2^{2k}\|P_{k'}v\|_{F_k(T)}^2.
\end{aligned}
\end{equation}
To recover it, we introduce a modified energy (see \eqref{eq:new energy1-5}). The modified energy in addition to the commutator estimates enables us to move two derivatives taken in the high frequency regime to the low frequency regime, so thus we completely improve the above estimate \eqref{eq:energyfail}. This idea in its current form was first described by Kwon \cite{Kwon2008-1} and developed as localize versions in \cite{KP2015, Kwak2018-1}.

\medskip

In the previous work \cite{Kwak2018-2}, the first author also employed the short time Fourier restriction norm method to obtain \eqref{eq:energy-esti}, but failed in the energy space (only in $H^s(\T)$, $s>2$) due to the presence of the logarithmic divergence appearing in the high-low interaction case, particularly, the case when the low frequency mode has the largest modulation. To get rid of this obstacle and then reduce the regularity $s$ down, we adopt an additional weight in the function spaces, which was first described in \cite{IK2007JMAS} and also used in \cite{GPWW2011, GKK2013} (it is the first attempt to use an additional weight in its current form under the periodic setting as far as we know). This weight exactly attack the issued case in the previous work, but no further advantages (see Lemmas \ref{lem:energy1-1} and \ref{lem:commutator1}, particularly, the case when third-maximum frequency is strictly greater than the minimum frequency). 

\medskip

The trade-off of using a short time structure as well as a weight is to worsen nonlinear interaction components particularly whose resulting frequency is smaller than at least two of the others, for instance,
\[P_{low}(P_{low} u \cdot P_{high} v \cdot P_{high} w_{xxx}).\]
However, nonlinearties under coefficient conditions \eqref{eq:coefficient constraint} can be rewritten as a divergence form, thus one can take a weight to balance nolinear and energy estimates.

\medskip

On the other hand, inspired by the work \cite{Tzvetkov1999} for KdV equation (see also \cite{MST2001} for Benjamin-Ono equation), we are able to prove Theorems \ref{thm:C^3 illposed} and \ref{thm:failure}. Let us be precise. According to the philosophy of local theory, a solution behaves like a linear profile within its lifespan. By taking a suitable initial data $\phi$ consisting of two components localized low and high frequency modes, respectively, one has to establish that there is no function space $X_T$ continuously embedded in $C([-T,T],H^s(\T))$ such that
\begin{align*}
	\normo{\int_0^t e^{(t-s)\px^5} (v^2 v_{xxx}) ds}_{X_T} \les \|v\|_{X_T}^3,
\end{align*}
where $v \sim e^{t\px^5}\phi$. Crucial interation components for the proofs of Theorems \ref{thm:C^3 illposed} and \ref{thm:failure} are \emph{high-low-low to high} resonant ($(n_1,n_2,n_3,n) = (1,-1,N,N)$) and non-resonant ($(n_1,n_2,n_3,n) = (2,-1,N-1,N)$)\footnote{Integration by parts under non-resonant interactions guarantees derivative gains, but distributed terms occur quintic resonance.} components, respectively. See Section \ref{sec:illposed} for the details.

\subsection*{Organization} The paper is organized as follows: In Section \ref{sec:2}, we provide some preliminaries, particularly, renormalization of the equation \eqref{eq:5mkdv_integrable} and introduction to short-time spaces and their properties. 	In Section \ref{sec:3}, multilinear and energy estimates are established, which are essential improvements of the formal work \cite{Kwak2018-2}. In Section \ref{sec:illposed}, we give counter-examples for $C^5$ ill-posedness of the renormalized equation. In Appendices, we provide the standard argument of the local well-posedness for the sake of self-containedness, and we also give concrete computations for error terms appearing in Section \ref{sec:illposed}. 

\subsection*{Acknowledgement}
C. K. was supported by Young Research Program, National Research Foundation of Korea(NRF) grant funded by the Korea government(MSIT) (No. RS-2023-00210210) and the Basic Science Research Program through the National Research Foundation of Korea (NRF) funded by the Ministry of Education (No. 2019R1A6A1A11051177). K. Lee was supported by  the Basic Science Research Program through the National Research Foundation of Korea (NRF) funded by the Ministry of Education (No. NRF-2022R1I1A1A01056408) and the National Research Foundation of Korea(NRF) grant funded by the Korea government(MSIT) (No. NRF-2019R1A5A1028324).

\section{Preliminaries}\label{sec:2}

\subsection{Notations}\label{sec:2.1}
$\;$

\noindent $\bullet$ For $x,y \in \R_+$, $x \lesssim y$ means that there exists $C>0$ such that $x \le Cy$, and $x \sim y$ means $x \lesssim y$ and $y\lesssim x$. 

\medskip

\noindent $\bullet$ Let $a_1,a_2,a_3,a_4 \in \R$. The quantities $a_{max} \ge a_{sub} \ge a_{thd} \ge a_{min}$ can be conveniently defined to be the maximum, sub-maximum, third-maximum and minimum values of $a_1,a_2,a_3,a_4$ respectively.

\medskip

\noindent $\bullet$ For $f \in \Sch(\R \times \T) $ we denote by $\wt{f}$ or $\ft (f)$ the Fourier transform of $f$ with respect to both spatial and time variables,
\[\wt{f}(\tau,n)=\frac{1}{\sqrt{2\pi}}\int_{\R}\int_{0}^{2\pi} e^{-i xn}e^{-it\tau}f(t,x)\; dx dt .\]
Moreover, we use $\ft_x$ (or $\wh{\;}$ ) and $\ft_t$ to denote the Fourier transform with respect to space and time variable respectively.

\medskip

\noindent $\bullet$ We denote $\bra{\cdot} = \sqrt{ 1+ |\cdot|^2}$. Let $\eta_0: \R \to [0,1]$ denote a smooth bump function supported in $ [-2,2]$ and equal to $1$ in $[-1,1]$ with the following property of regularities:
\begin{equation*}
	\partial_n^{j} \eta_0(n) = O(\eta_0(n)/\bra{n}^j), \hspace{1em} j=0,1,2,
\end{equation*}
as $n$ approaches end points of the support of $\eta$.

\medskip

\noindent $\bullet$ Let $\Z_+ = \Z \cap [0,\infty]$. For $k \in \Z_+$, we set
\[I_0 = \set{n \in \Z : |n| \le 2} \hspace{1em} \mbox{ and } \hspace{1em} I_k = \set{n \in \Z : 2^{k-1} \le |n| \le 2^{k+1}}, \hspace{1em} k \ge 1.\]

\medskip

\noindent $\bullet$ For $k,j \in \Z_+$, let
\[D_{k,j}=\{(\tau,n) \in \R \times \Z : \tau - \mu(n) \in I_j, n \in I_k \}, \hspace{2em} D_{k,\le j}=\cup_{l\le j}D_{k,l}.\]

\medskip

\noindent $\bullet$ For $k \in \Z_+ $, let 
\begin{equation}\label{eq:cut-off1}
	\chi_0(n) = \eta_0(n), \hspace{1em} \mbox{and} \hspace{1em} \chi_k(n) = \eta_0(n/2^k) - \eta_0(n/2^{k-1}), \hspace{1em} k \ge 1,
\end{equation}
which is supported in $I_k$, and
\[\chi_{[k_1,k_2]}=\sum_{k=k_1}^{k_2} \chi_k \quad \mbox{ for any} \ k_1 \le k_2 \in \Z_+ .\]
$\{ \chi_k \}_{k \in \Z_+}$ is the inhomogeneous decomposition function sequence to the frequency space. For $k\in \Z_+$, let $P_k$ denote the
operators on $L^2(\T)$ defined by $\widehat{P_kv}(n)=\chi_k(n)\wh{v}(n)$. For $l\in \Z_+$, let
\[P_{\le l}=\sum_{k \le l}P_k, \quad P_{\ge l}=\sum_{k \ge l}P_k.\]
For the time-frequency decomposition, we use the cut-off function $\eta_j$, but the same as $\eta_j(\tau-\mu(n)) = \chi_j(\tau-\mu(n))$.

\medskip

\noindent $\bullet$ For $Z = \R$ or $\Z$, let $\Gamma_k(Z)$ denote $(k-1)$-dimensional hyperplane by 
\[\set{\overline{x} = (x_1,x_2,...,x_k) \in Z^k : x_1 +x_2 + \cdots +x_k= 0}.\] 

\subsection{Reformulation of the main equations}\label{sec:reformulation}
As mentioned in Remark \ref{rem:integrable is enough}, in what follows, we only deal with \eqref{eq:5mkdv_integrable}. Taking the Fourier coefficient of \eqref{eq:5mkdv_integrable} with respect to the spatial variable, one has
\begin{equation}\label{eq:5mkdv1}
	\begin{split}
		\pt\wh{u}(n) - in^5\wh{u}(n) =&10i \sum_{n_1+n_2+n_3=n} \wh{u}(n_1)\wh{u}(n_2)n_3^3\wh{u}(n_3) \\
		&+10i \sum_{n_1+n_2+n_3=n} n_1\wh{u}(n_1)n_2\wh{u}(n_2)n_3\wh{u}(n_3)\\
		&+40i \sum_{n_1+n_2+n_3=n} \wh{u}(n_1)n_2\wh{u}(n_2)n_3^2\wh{u}(n_3)\\
		&+  30i \sum_{n_1+n_2+n_3+n_4+n_5=n} \wh{u}(n_1)\wh{u}(n_2)\wh{u}(n_3)\wh{u}(n_4)n_5\wh{u}(n_5) \\
	\end{split}
\end{equation}
Note that cubic resonance in the right-hand side of \eqref{eq:5mkdv1} appears  when $H =0$, where
\begin{equation*}
	\begin{aligned}
		H =& H(n_1,n_2,n_3)\\ 
		:=& (n_1+n_2+n_3)^5 - n_1^5 - n_2^5 - n_3^5 \\
		=& \frac52(n_1+n_2)(n_1+n_3)(n_2+n_3)(n_1^2+n_2^2+n_3^2+n^2).
	\end{aligned}
\end{equation*}

\begin{rem}
Due to $n_1^2+n_2^2+n_3^2+n^2 > 0$ for non-zero integers $n_1, n_2, n_3, n$, we know
\[
H=0 \quad \Leftrightarrow \quad (n_1+n_2)(n_1+n_3)(n_2+n_3)=0.
\]
\end{rem}
Moreover, one can also detect the quintic resonance $\norm{u}_{L^4}^4 n\wh{u}(n)$ when frequencies allow the following relation: 
\[	n_\ell + n_j + n_m +n_p = 0 \quad \mbox{for} \quad 1 \le \ell < j < m < p \le 5.\]
Most of resonances can be controlled by the first and the second conserved quantities in  \eqref{eq:hamiltonian}, thus we rewrite \eqref{eq:5mkdv1} as follows: 
\begin{equation}\label{eq:5mkdv2}
	\begin{split}
		\pt\wh{u}(n) - i(n^5 + d_1n^3 + d_2n)\wh{u}(n) =&~{}d_3 i \norm{u(t)}_{L^4}^4n\wh{u}(n) -20in^3|\wh{u}(n)|^2\wh{u}(n)\\
		&+10in \sum_{\N_{3,n}} \wh{u}(n_1)\wh{u}(n_2)n_3^2\wh{u}(n_3) \\
		&+10in \sum_{\N_{3,n}} \wh{u}(n_1)n_2\wh{u}(n_2)n_3\wh{u}(n_3)\\
&+6in \sum_{\N_{5,n}} \wh{u}(n_1)\wh{u}(n_2)\wh{u}(n_3)\wh{u}(n_4)\wh{u}(n_5),
	\end{split}
\end{equation}
where $d_1 = 10\norm{u_0}_{L^2}^2$, $d_2 = 10 (\norm{u_0}_{\dot{H^1}}^2 + \norm{u_0}_{L^4}^4)$, $d_3 =20$, 
\begin{equation*}
	\N_{3,n} = \left\{(n_1,n_2,n_3) \in \Z^3 : \begin{array}{ll} &n_1+n_2+n_3=n, \\ 
		&(n_1+n_2)(n_1+n_2)(n_2+n_3) \neq 0
	\end{array}
	\right\}.
\end{equation*}
and
\begin{align*}
\begin{aligned}
	&\N_{5,n} = \left\{
	(n_1,n_2,n_3,n_4,n_5) \in \Z^5 : \;\;\begin{aligned} &n_1+n_2+n_3+n_4+n_5=n,\\
		&n_\ell + n_j + n_m +n_p \neq 0,\\
		& \; 1 \le \ell <j<m<p \le 5
	\end{aligned}
	\right\}.
	\end{aligned}
\end{align*}

\begin{rem}
Even if we choose the non-integrable equation  \eqref{eq:5mkdv-gen} with \eqref{eq:coefficient constraint}, taking $\mathcal H_0$ and $\mathcal H_1$ in \eqref{eq:hamiltonian-2}, we still have \eqref{eq:5mkdv2} with different coefficients $d_j$, $j=1,2,3$. 
\end{rem}

\begin{rem}
	$\overline{\N}_{\ell,n}$, $\ell=3,5$, can be similarly defined with the frequency relation constraints 
\[\displaystyle \sum_{j=1}^\ell n_j + n = 0.\]
\end{rem}
Due to the presence of $d_3\norm{u(t)}_{L^4}^4n\wh{u}(n)$ in the right-hand side of \eqref{eq:5mkdv2}, we, similarly as in \cite{Kwak2018-2} further take the nonlinear transformation defined by 
\begin{equation}\label{eq:modified solution}
	v(t,x) := \NT(u)(t,x) :=  \frac{1}{\sqrt{2\pi}}\sum_{n \in \Z} e^{i(nx - 20n \int_0^t \norm{u(s)}_{L^4}^4 \; ds)}\wh{u}(t,n).
\end{equation}
Staffilani \cite{Staffilani1997} first introduced this-type of nonlinear transformation for the generalized KdV equations, and showed its bi-continuity in $H^s(\T)$ for $s > 1/2$. See also \cite{CKSTT2004} for its application.
With this new variable, we finally reformulate the equation \eqref{eq:5mkdv2} by
\begin{equation}\label{eq:5mkdv3}
	\begin{split}
		\pt\wh{v}(n) - i(n^5 + d_1n^3 + d_2n)\wh{v}(n)=&~{}-20in^3|\wh{v}(n)|^2\wh{v}(n)\\
		&+10in \sum_{\N_{3,n}} \wh{v}(n_1)\wh{v}(n_2)n_3^2\wh{v}(n_3) \\
		&+10in \sum_{\N_{3,n}} \wh{v}(n_1)n_2\wh{v}(n_2)n_3\wh{v}(n_3)\\
      &+6i n\sum_{\N_{5,n}} \wh{v}(n_1)\wh{v}(n_2)\wh{v}(n_3)\wh{v}(n_4)\wh{v}(n_5) \\
		=:&~{} \wh{N}_1(v) + \wh{N}_2(v) + \wh{N}_3(v) + \wh{N}_4(v)\\
		v(0,x) = u(0,x) =: v_0.\hspace{4.5em}&
	\end{split}
\end{equation}
We abbreviate  $N_j(v)$ to $N_j(u,v,w)$ for the cubic terms, $j=1,2,3$, or $N_4(v_1,v_2,v_3,v_4,v_5)$ for the quintic term in the sequel, and $\wh{N_j}$ denotes the Fourier transform of $N_j$.

\subsection{Function spaces}\label{sec:ftn-space}

For $s,b \in \R$, $X^{s,b}$-norm associated to \eqref{eq:5mkdv3} is given by

\[\norm{u}_{X^{s,b}}=\norm{\bra{ \tau - \mu(n)}^b\bra{n}^s \ft(u)}_{L_{\tau}^2(\R;\ell_n^2(\Z))},\]
where 
\[
\mu(n) = n^5 + d_1n^3 + d_2n.
\]

The $X^{s,b}$ space is now widely used to study the low-regularity theory for the dispersive equations. The Fourier restriction norm method was first introduced in its current form by Bourgain \cite{Bourgain1993} and further developed by Kenig, Ponce and Vega \cite{KPV1996} and Tao \cite{Tao2001}. Since then, it becomes a standard way to study semilinear dispersive equations.

For $k \in \Z_+$, we define the $X^{0,\frac12,1}$-type space $X_k$ for frequency localized functions,
\begin{align*}
	X_k=\left\{
	\begin{aligned}
		f\in L^2(\R \times \Z): f(\tau,n) \mbox{ is supported in } \R \times I_k  \; \mbox{ and }\;
		\norm{f}_{X_k}<\infty
	\end{aligned}
	\right\}
\end{align*}
where
$$
\norm{f}_{X_k}:=\sum_{j=0}^\infty 2^{j/2}\beta_{j,k}\norm{\eta_j(\tau-\mu(n))\cdot f(\tau,n)}_{L_{\tau}^2\ell_n^2}
$$
with
\begin{equation}\label{eq:weight}
\beta_{j,k} = \left\{ \begin{aligned} 1  \qquad&  \;\;(k=0), \\ 1+ 2^{\frac14(j-5k)} & \;\;(k\ge 1). \end{aligned} \right.
\end{equation}
\begin{rem}\label{rem:weight}
The weight introduced in \eqref{eq:weight} can be generalized as $1+2^{\gamma(j-5k)}$ for $k \ge 1$, where $0 < \gamma \le \frac14$. Non-negativity of $\gamma$ is essential to handle logarithmic divergence appearing in the energy estimates as mentioned. Note that the heavier weight, the worse nonlinear estimates. The possible range of $\gamma$ allowing nonlinear estimates in $H^2(\T)$ level is $0 < \gamma \le \frac14$, and thus we fix $\gamma = \frac14$ for computational simplicity.

\begin{rem}
As mentioned in Section \ref{sec:idea}, the weight handles the logarithmic divergence issue in the energy estimates, particularly, the high-low interaction case when the low frequency mode has the largest modulation. Under the periodic setting, however, high and very low ($< 1$) frequency interactions cannot appear so that the non-singular weight \eqref{eq:weight} is itself applicable. This is a big contrast of the choice of the weight with the work by Guo, Kwon, and the first author \cite{GKK2013}.
\end{rem}

%
%
%
\end{rem}

As in \cite{IKT2008}, at frequency $2^k$ we use the $X_k$-norm, uniformly on the $2^{-2k}$ time scale (see Section \ref{sec:idea} for the motivation of such a time scale). Precisely, for $k\in \Z_+$, we define function spaces
\begin{eqnarray*}
	&& F_k=\left\{
	\begin{array}{l}
		f\in L^2(\R \times \T): \widehat{f}(\tau,n) \mbox{ is supported in } \R \times I_k \mbox{ and } \\
		\norm{f}_{F_k}=\sup\limits_{t_k\in \R}\norm{\ft[f\cdot\eta_0(2^{2k}(t-t_k))]}_{X_k}<\infty
	\end{array}
	\right\},
	\\
	&&N_k=\left\{
	\begin{array}{l}
		f\in L^2(\R \times \T): \widehat{f}(\tau,n) \mbox{ is supported in } \R \times I_k \mbox{ and }  \\
		\norm{f}_{N_k}=\sup\limits_{t_k\in \R}\norm{(\tau-\mu(n)+i2^{2k})^{-1}\ft[f\cdot\eta_0(2^{2k}(t-t_k))]}_{X_k}<\infty
	\end{array}
	\right\}.
\end{eqnarray*}
In a standard way, we localize these spaces in time $T\in (0,1]$ as
\begin{align*}
	F_k(T)=&\left\{f\in C([-T,T]:L^2(\T)): \norm{f}_{F_k(T)}=\inf_{\wt{f}=f \mbox{ in } [-T,T] \times \T }\norm{\wt f}_{F_k}\right\},\\
	N_k(T)=& \left\{f\in C([-T,T]:L^2(\T)): \norm{f}_{N_k(T)}=\inf_{\wt{f}=f \mbox{ in } [-T,T] \times \T }\norm{\wt f}_{N_k}\right\}.
\end{align*}
For $s\geq 0$ and $T\in (0,1]$, we collect dyadic pieces in a Littlewood-Paley argument to define function spaces for solutions and
nonlinear terms as
\begin{eqnarray*}
	&&F^{s}(T)=\left\{ u: \norm{u}_{F^{s}(T)}^2=\sum_{k=0}^{\infty}2^{2sk}\norm{P_k(u)}_{F_k(T)}^2<\infty \right\},
	\\
	&&N^{s}(T)=\left\{ u: \norm{u}_{N^{s}(T)}^2=\sum_{k=0}^{\infty}2^{2sk}\norm{P_k(u)}_{N_k(T)}^2<\infty \right\}.
\end{eqnarray*}
We finally define the energy space analogously as above, i.e., for $s\geq 0$ and $u\in C([-T,T]:H^\infty(\T))$
\begin{eqnarray*}
	\norm{u}_{E^{s}(T)}^2=\norm{P_{0}(u(0))}_{L^2}^2+\sum_{k\geq 1}\sup_{t_k\in [-T,T]}2^{2sk}\norm{P_k(u(t_k))}_{L^2}^2.
\end{eqnarray*}

\begin{lem}[Properties of $X_k$]\label{lem:prop of Xk}
	Let $k, l\in \Z_+$ with $l \le 5k$ and $f_k\in X_k$. Then
	\begin{equation}\label{eq:prop1}
		\begin{aligned}
			&\sum_{j=l+1}^\infty 2^{j/2}\beta_{j,k}\normo{\eta_j(\tau-\mu(n))\int_{\R}|f_k(\tau',n)|2^{-l}(1+2^{-l}|\tau-\tau'|)^{-4}d\tau'}_{L_{\tau}^2\ell_n^2}\\
			&\qquad+2^{l/2}\beta_{l,k}\normo{\eta_{\leq l}(\tau-\mu(n)) \int_{\R}|f_k(\tau',n)| 2^{-l}(1+2^{-l}|\tau-\tau'|)^{-4}d\tau'}_{L_{\tau}^2\ell_n^2}\\
&\lesssim\norm{f_k}_{X_k}.					\end{aligned}
	\end{equation}
	In particular, if $t_0\in \R$ and $\gamma\in \Sch(\R)$, then
	\begin{eqnarray}\label{eq:prop2}
		\norm{\ft[\gamma(2^l(t-t_0))\cdot \ft^{-1}(f_k)]}_{X_k}\lesssim
		\norm{f_k}_{X_k}.
	\end{eqnarray}
	Moreover, from the definition of $X_k$-norm,
	\[\normo{\int_{\R}|f_k(\tau',n)|\; d\tau'}_{\ell_n^2} \lesssim \norm{f_k}_{X_k}.\]
\end{lem}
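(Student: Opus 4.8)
The plan is to establish the kernel bound \eqref{eq:prop1} first and then read off \eqref{eq:prop2} and the last inequality from it. Throughout write $K_l(\tau):=2^{-l}(1+2^{-l}|\tau|)^{-4}$; the only properties used are $\norm{K_l}_{L^1(\R)}\les 1$, $\norm{K_l}_{L^\infty(\R)}=2^{-l}$, and the off-origin decay $K_l(\sigma)\les 2^{3l}|\sigma|^{-4}$ for $|\sigma|\ge 2^l$. Granting \eqref{eq:prop1}, the estimate \eqref{eq:prop2} follows at once: since $\gamma\in\Sch(\R)$ its Fourier transform $\wh\gamma$ obeys $|\wh\gamma(\xi)|\les\bra{\xi}^{-4}$, and $\ft_t[\gamma(2^l(t-t_0))](\tau)=2^{-l}e^{-it_0\tau}\wh\gamma(2^{-l}\tau)$, so that
\[
\bigl|\ft[\gamma(2^l(t-t_0))\cdot\ft^{-1}(f_k)](\tau,n)\bigr|\les\int_\R K_l(\tau-\tau')\,|f_k(\tau',n)|\,d\tau';
\]
splitting the $X_k$-sum defining the left-hand side of \eqref{eq:prop2} into $j\ge l+1$ and $j\le l$, using $\eta_j\le\eta_{\le l}$ for $j\le l$ together with $\sum_{j=0}^{l}2^{j/2}\beta_{j,k}\les 2^{l/2}\beta_{l,k}$ (valid because $l\le 5k$), the right-hand side of \eqref{eq:prop1} appears verbatim. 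The last inequality is even more direct: writing $f_{k,j}:=\eta_j(\tau-\mu(n))\,f_k$ and using Cauchy--Schwarz on the $\tau'$-support of measure $\les 2^{j}$, one gets $\int_\R|f_{k,j}(\tau',n)|\,d\tau'\le 2^{j/2}\norm{f_{k,j}(\cdot,n)}_{L^2_{\tau'}}$, hence $\normo{\int_\R|f_k(\tau',n)|\,d\tau'}_{\ell^2_n}\le\sum_{j\ge0}2^{j/2}\norm{f_{k,j}}_{L^2_\tau\ell^2_n}\le\norm{f_k}_{X_k}$, using $\beta_{j,k}\ge 1$.

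For \eqref{eq:prop1} I would decompose the input $f_k=\sum_{j'\ge0}f_{k,j'}$ as above and, for each $j'$, bound the contribution of $K_l*_\tau|f_{k,j'}|$ to the two pieces on the left-hand side: $\|\eta_j(\tau-\mu(n))(K_l*_\tau|f_{k,j'}|)\|_{L^2_\tau\ell^2_n}$ with $j\ge l+1$, and $\|\eta_{\le l}(\tau-\mu(n))(K_l*_\tau|f_{k,j'}|)\|_{L^2_\tau\ell^2_n}$. Three regimes occur. (i) When output and input modulations are comparable ($|j-j'|\le 4$ in the first piece, or $j'\ge l-4$ in the collapsed piece), Young's inequality gives $\les\norm{K_l}_{L^1}\norm{f_{k,j'}}_{L^2_\tau\ell^2_n}\les\norm{f_{k,j'}}_{L^2_\tau\ell^2_n}$. (ii) When the input sits at low modulation $j'\le l-4$ and we are estimating the collapsed piece, we do \emph{not} use Young: instead, from $\norm{K_l}_{L^\infty}=2^{-l}$ and Cauchy--Schwarz one has the pointwise bound $(K_l*_\tau|f_{k,j'}|)(\tau,n)\le 2^{-l}2^{j'/2}\norm{f_{k,j'}(\cdot,n)}_{L^2_{\tau'}}$, and multiplying by the $\tau$-volume $\les 2^l$ of $\mathrm{supp}\,\eta_{\le l}$ yields the Bernstein-type \emph{gain} $\|\eta_{\le l}(\tau-\mu(n))(K_l*_\tau|f_{k,j'}|)\|_{L^2_\tau\ell^2_n}\les 2^{(j'-l)/2}\norm{f_{k,j'}}_{L^2_\tau\ell^2_n}$. (iii) Otherwise the two modulations are well separated and the larger of them exceeds $2^l$ (indeed $j\ge l+1$ or $j'\ge l+1$ in this case), so $|\tau-\tau'|\gtrsim 2^{\max(j,j')}$ on the relevant sets, $K_l(\tau-\tau')\les 2^{3l-4\max(j,j')}$, and combining this with $\int_\R|f_{k,j'}(\tau',n)|\,d\tau'\le 2^{j'/2}\norm{f_{k,j'}(\cdot,n)}_{L^2_{\tau'}}$ and the $\tau$-volume of the output support produces a bound carrying a large negative power of $2^{|j-j'|}$ and of $2^{\max(j,j')-l}$.

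It remains to re-sum in $j$ and then in $j'$. The regime (i) contributions reproduce, after re-indexing ($\beta_{j,k}$ being essentially constant over $O(1)$-ranges of $j$), exactly $\sum_{j'\ge0}2^{j'/2}\beta_{j',k}\norm{f_{k,j'}}_{L^2_\tau\ell^2_n}=\norm{f_k}_{X_k}$. For the low-modulation pieces fed into the factor $2^{l/2}\beta_{l,k}$ in front of the collapsed term, regime (ii) contributes $2^{l/2}\beta_{l,k}\cdot 2^{(j'-l)/2}\norm{f_{k,j'}}=\beta_{l,k}\,2^{j'/2}\norm{f_{k,j'}}$, and here the hypothesis $l\le 5k$ enters decisively: it forces $\beta_{l,k}=1+2^{(l-5k)/4}\les 1\le\beta_{j',k}$, so this is $\les 2^{j'/2}\beta_{j',k}\norm{f_{k,j'}}_{L^2_\tau\ell^2_n}$ and sums to $\norm{f_k}_{X_k}$. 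In regime (iii) the factors $2^{-c|j-j'|}$ make the $j$-sum geometric, and the residual weight is controlled by $2^{3l}\sum_{j\ge l+1}\beta_{j,k}2^{-3j}\les 2^{-3}+2^{(l-5k)/4}2^{-11/4}\les 1$, again thanks to $l\le 5k$; all implicit constants are visibly uniform in $k,l$.

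The step I expect to be the real obstacle is regime (ii): a careless application of Young's inequality there would only give $2^{l/2}\beta_{l,k}\norm{f_{k,j'}}_{L^2_\tau\ell^2_n}$, which is \emph{larger} than the corresponding piece $2^{j'/2}\beta_{j',k}\norm{f_{k,j'}}_{L^2_\tau\ell^2_n}$ of $\norm{f_k}_{X_k}$, so the lemma would fail. One must simultaneously exploit the $2^{(j'-l)/2}$ gain coming from the small ($\sim 2^{j'}$) $\tau$-measure of $\mathrm{supp}\,f_{k,j'}$ and the assumption $l\le 5k$ that keeps $\beta_{l,k}$ bounded. Once these three regimes are isolated, every resulting series in $j$ and $j'$ is geometric and the remaining bookkeeping is routine.
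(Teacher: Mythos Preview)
Your proof is correct. The paper does not give its own argument here but simply refers to \cite{GKK2013}; your approach is the standard one (and is essentially what appears in that reference, adapted to the weighted setting): decompose the input by modulation, handle near-diagonal pieces via Young's inequality, far-diagonal pieces via the tail decay of $K_l$, and the low-input-to-collapsed piece via the $L^\infty$ bound on $K_l$ together with the Bernstein-type volume gain. You correctly identify that the hypothesis $l\le 5k$ is what makes the weight harmless, forcing $\beta_{l,k}\les 1$ in regime (ii) and making the weighted geometric series $\sum_{j\ge l+1}\beta_{j,k}2^{-3j}$ converge uniformly in regime (iii).
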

\begin{proof}
	We refer to \cite{GKK2013} for the proof. 
\end{proof}

\begin{rem}
	The restriction $l \le 5k$ in Lemma \ref{lem:prop of Xk} is required to control the weight $\beta_{j,k}$ in the left-hand side of \eqref{eq:prop1}. See this with Lemma 2.1 in \cite{Kwak2018-2} for the comparison. 
\end{rem}

As in \cite{IKT2008}, for any $k\in \Z_+$ we define the set $S_k$ of $k$-\emph{acceptable} time multiplication factors 
\[
S_k=\left\{m_k:\R\rightarrow \R: \norm{m_k}_{S_k}=\sum_{j=0}^{10} 2^{-2jk}\norm{\partial^jm_k}_{L^\infty}< \infty \right\}.
\] 
 Using the definitions and \eqref{eq:prop2}, Direct estimates show that for any $s\geq 0$ and $T\in (0,1]$
\[\begin{cases}
	\normb{\sum\limits_{k\in \Z_+} m_k(t)\cdot P_k(u)}_{F^{s}(T)}\lesssim (\sup_{k\in \Z_+}\norm{m_k}_{S_k})\cdot \norm{u}_{F^{s}(T)};\\
	\normb{\sum\limits_{k\in \Z_+} m_k(t)\cdot P_k(u)}_{N^{s}(T)}\lesssim (\sup_{k\in \Z_+}\norm{m_k}_{S_k})\cdot \norm{u}_{N^{s}(T)};\\
	\normb{\sum\limits_{k\in \Z_+} m_k(t)\cdot P_k(u)}_{E^{s}(T)}\lesssim (\sup_{k\in \Z_+}\norm{m_k}_{S_k})\cdot \norm{u}_{E^{s}(T)}.
\end{cases}\]

\subsection{Block estimates and trilinear nonlinear estimates}
This section devotes to providing key ingredients for nonlinear and energy estimates, which will be addressed in Section \ref{sec:3}. 

Let $\zeta_i = \tau_i - \mu(n_i)$. For compactly supported functions $f_i \in L^2(\R \times \Z)$, $i=1,2,3,4$, we define 
\begin{align*}
&J(f_1,f_2,f_3,f_4) \\
&= \sum_{n_4, \N_{3,n_4}}\int_{\overline{\zeta}\in \Gamma_4(\R)}f_1(\zeta_1,n_1)f_2(\zeta_2,n_2)f_3(\zeta_3,n_3)f_4(\zeta_4 + G(n_1,n_2,n_3),n_4),
\end{align*}
where  $\overline{\zeta} = (\zeta_1,\zeta_2,\zeta_3,-\zeta_4-G(n_1,n_2,n_3))$ with the resonance function   
\[
G(n_1,n_2,n_3) = \mu(n_1 + n_2 + n_3)- \mu(n_1) - \mu(n_2) - \mu(n_3).
\] 
The function $G$ plays an crucial role in $X^{s,b}$ analysis. From the identities
\[n_1+n_2+n_3 = n_4\]
and
\[\zeta_1+\zeta_2+\zeta_3 = \zeta_4 + G(n_1,n_2,n_3)\]
on the support of $J(f_1,f_2,f_3,f_4)$, we see that $J(f_1,f_2,f_3,f_4)$ vanishes unless
\begin{equation}\label{eq:support property}
	\begin{array}{c}
		2^{k_{max}} \sim 2^{k_{sub}}\\
		2^{j_{max}} \sim \max(2^{j_{sub}}, |G|),
	\end{array}
\end{equation}
where $|n_i| \sim 2^{k_i}$ and $|\zeta_i| \sim 2^{j_i}$, $i=1,2,3,4$. Note that a direct computation gives
\begin{equation}\label{eq:modulation}
	|G| = \frac52(n_1+n_2)(n_2+n_3)(n_3+n_1)\left(n_1^2+n_2^2 + n_3^2 + (n_1+n_2+n_3)^2 + \frac{6d_1}{5}\right).
\end{equation}
Moreover, a direct computation in addition to a fact that the convolution operator allows the commutative law, ensures
\[|J(f_1,f_2,f_3,f_4)| = |J(f_2,f_1,f_3,f_4)| = |J(f_3,f_2,f_1,f_4)| = |J(f_1^*,f_2^*,f_4,f_3)|.\]

\begin{lem}[Block estimates, Lemma 4.1 in \cite{Kwak2018-2}]\label{lem:tri-L2}
	Let $k_i, j_i\in \Z_+$, $i=1,2,3,4$. Let $f_{i} \in L^2(\R \times \Z) $ be nonnegative functions supported in $D_{k_i,j_i}$:
	
	\noi(a) For any $k_i,j_i \in \Z_+$, $i=1,2,3,4$, we
	have
	\begin{align}\label{eq:tri-block estimate-a1}
		J(f_{1},f_{2},f_{3},f_{4}) \lesssim 2^{(j_{min}+j_{thd})/2}2^{(k_{min}+k_{thd})/2}\prod_{i=1}^4 \|f_{i}\|_{L^2}.
	\end{align}
	
	\noi(b) Let $k_{thd} \le k_{max}-10$. Then, the following holds:
	
 If $(k_i,j_i) = (k_{thd},j_{max})$ for $i=1,2,3,4$, we have
\begin{align}\label{eq:tri-block estimate-b1.2}
	J(f_{1},f_{2},f_{3},f_{4}) \lesssim 
		 2^{(j_1+j_2+j_3+j_4)/2}2^{-(j_{sub}+j_{max})/2}2^{k_{thd}/2}  \prod_{i=1}^4 \|f_{i}\|_{L^2}. 
\end{align}
Otherwise, we get
\begin{align*}
	J(f_{1},f_{2},f_{3},f_{4}) \lesssim 2^{(j_1+j_2+j_3+j_4)/2}2^{-(j_{sub}+j_{max})/2}2^{k_{min}/2}  \prod_{i=1}^4 \|f_{i}\|_{L^2}.
\end{align*}
\end{lem}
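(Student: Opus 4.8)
The plan is to establish the block (bilinear-type) estimate \eqref{eq:tri-block estimate-a1} and its refined version \eqref{eq:tri-block estimate-b1.2}, \eqref{eq:tri-block estimate-b1.2}-analogue by reducing the quadrilinear form $J(f_1,f_2,f_3,f_4)$ to a pair of $L^2$-based convolution estimates, exploiting the permutation symmetries $|J(f_1,f_2,f_3,f_4)| = |J(f_2,f_1,f_3,f_4)| = |J(f_3,f_2,f_1,f_4)| = |J(f_1^*,f_2^*,f_4,f_3)|$ already recorded in the excerpt so that we may always place the functions in the most convenient order. For part (a), I would first use the constraint $\overline{\zeta}\in\Gamma_4(\R)$ together with the convolution structure in the $n$ variable to write $J$ as an integral of a product of two pieces, each of the form $\|(f_a * f_b)\|_{L^2}$ with the convolution taken in both the $\tau$ and $n$ variables; the elementary inequality $\|f_a * f_b\|_{L^2} \le \|f_a\|_{L^1}\|f_b\|_{L^2} \le \|f_a\|_{L^2}\|f_b\|_{L^2}\,|E_a|^{1/2}$, where $E_a = D_{k_a,j_a}$ is the support and $|E_a|\sim 2^{k_a}2^{j_a}$ in the periodic setting (the $n$-sum contributes $2^{k_a}$, the $\tau$-integral contributes $2^{j_a}$), then gives $2^{(k_a+j_a)/2}$. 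Choosing $a,b$ to be the two indices realizing $j_{min},j_{thd}$ and $k_{min},k_{thd}$ simultaneously — which is possible after a permutation because the two smallest modulations and the two smallest frequencies can be matched up using the symmetry orbit — yields exactly $2^{(j_{min}+j_{thd})/2}2^{(k_{min}+k_{thd})/2}$.

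For part (b), the extra structure is the high-low scenario $k_{thd}\le k_{max}-10$: here the support constraint \eqref{eq:support property} forces $2^{k_{max}}\sim 2^{k_{sub}}$ and, crucially, $2^{j_{max}}\sim\max(2^{j_{sub}},|G|)$, so the maximum modulation is essentially pinned by the resonance function $G$ given explicitly in \eqref{eq:modulation}. The idea is that, on the two high-frequency legs, the map $(n_a,n_b)\mapsto G$ (or equivalently the map $n_i \mapsto \zeta_i + G$ in the shifted leg) is, for fixed values of the remaining frequencies and of $\zeta_1+\cdots$, locally injective with a lower bound on its Jacobian (a derivative bound) coming from the factorized form of $G$ — the factor $n_1^2+n_2^2+n_3^2+(n_1+n_2+n_3)^2+\tfrac{6d_1}{5}\sim 2^{2k_{max}}$ is large, and differentiating in the relevant high-frequency variable produces a gain of $2^{2k_{max}}$, hence an improvement over the trivial count. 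Concretely, I would perform a change of variables in the high-frequency summation variable to $G$ (or $\tau_{max}-\mu(n_{max})$ shifted), which replaces a factor $2^{k_{max}}$ in the trivial support-measure count by $|G|^{-1}\cdot(\text{number of }n\text{ in a }G\text{-level set})$; since $|G|\sim 2^{j_{max}}$ on the support and the level sets of $G$ in one variable have $O(1)$ size once the quadratic factor dominates, this converts $2^{(k_{max}+j_{max})/2}$ into $2^{(k_{thd} + j_{max})/2}\cdot 2^{-j_{max}/2}\cdot 2^{j_{max}/2}$-type bookkeeping, ultimately producing the claimed $2^{(j_1+j_2+j_3+j_4)/2}2^{-(j_{sub}+j_{max})/2}$ with either $2^{k_{thd}/2}$ (when one of the legs carrying the high modulation is the third-frequency leg, i.e. $(k_i,j_i)=(k_{thd},j_{max})$) or the better $2^{k_{min}/2}$ otherwise.

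The main obstacle I anticipate is the careful case analysis needed to decide, after using the symmetry group, which leg carries the largest modulation and which carries $k_{thd}$, and correspondingly whether the $G$-change-of-variables can be executed in a small (frequency $\sim 2^{k_{min}}$ or $2^{k_{thd}}$) variable or must be done in a high-frequency variable — this is precisely what distinguishes the two bounds in (b). One must check that the derivative $\partial_{n}G$ restricted to the appropriate variable is genuinely $\gtrsim 2^{2k_{max}}$ (using $|n_i+n_j|\ge 1$ for the relevant pair, which holds on $\N_{3,n}$ by definition, and the positivity/size of the quadratic factor, where the constant $\tfrac{6d_1}{5}$ with $d_1 = 10\|u_0\|_{L^2}^2\ge 0$ only helps), and that no degeneracy occurs when two of the high frequencies nearly cancel — but the support condition $k_{thd}\le k_{max}-10$ together with $k_{max}\sim k_{sub}$ rules out cancellation among the two dominant legs. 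Since this lemma is quoted from \cite{Kwak2018-2} (Lemma 4.1 there), I would follow that reference for the bookkeeping, merely re-deriving the $G$-Jacobian bound from \eqref{eq:modulation} and invoking the convolution/support-measure estimate above for the remaining factors.
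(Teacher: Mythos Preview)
First, note that the paper does not itself prove this lemma; it is quoted as Lemma~4.1 of \cite{Kwak2018-2}.  Your outline for part~(b) captures the correct mechanism --- exploiting the factorised resonance \eqref{eq:modulation} to trade a frequency count for a modulation count, with the dichotomy according to whether the leg carrying $j_{max}$ coincides with the one carrying $k_{thd}$ --- and is in line with the argument in \cite{Kwak2018-2}, though what you have written is still only a heuristic.

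Part~(a), however, has a genuine gap.  Your bilinear bound
\[
\|f_a*f_b\|_{L^2}\le |D_{k_a,j_a}|^{1/2}\|f_a\|_{L^2}\|f_b\|_{L^2}\sim 2^{(k_a+j_a)/2}\|f_a\|_{L^2}\|f_b\|_{L^2}
\]
ties the frequency loss and the modulation loss to the \emph{same} index~$a$.  You then assert that by the symmetry orbit one may choose a single pair $\{a,b\}$ that simultaneously realises the two smallest $j$'s and the two smallest $k$'s; this is false.  Take for instance $(j_1,j_2,j_3,j_4)=(1,2,10,10)$ and $(k_1,k_2,k_3,k_4)=(10,10,1,2)$, which is compatible with $2^{k_{max}}\sim 2^{k_{sub}}$ in \eqref{eq:support property}: the two smallest $j$'s sit at indices $\{1,2\}$ while the two smallest $k$'s sit at $\{3,4\}$, and no relabelling of the four legs changes this.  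For every partition into two pairs your estimate yields at best $2^{11}$, whereas \eqref{eq:tri-block estimate-a1} requires $2^{3}$.  The standard argument \emph{decouples} the $\tau$- and the $n$-variables: for fixed $n_1,\dots,n_4$ one first applies Young's inequality on the hyperplane $\zeta_1+\zeta_2+\zeta_3=\zeta_4+G$, placing two freely chosen factors in $L^1_\zeta$ (cost $2^{(j_{min}+j_{thd})/2}$), and then --- independently --- applies Young's inequality to the discrete convolution under $n_1+n_2+n_3=n_4$, placing a possibly \emph{different} pair of factors in $\ell^1_n$ (cost $2^{(k_{min}+k_{thd})/2}$).  The two choices are genuinely independent because the modulation integration is performed pointwise in $n$; this is what your ``simultaneous matching'' claim was meant to supply but cannot.
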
 

\begin{rem}\label{rem:duality}
	By the duality, $J(f_1,f_2,f_3,f_4)$ in Lemma \ref{lem:tri-L2} can be replaced by $\norm{\mathbf{1}_{D_{k_4,j_4}}(f_{1}\ast f_{2}\ast f_{3})}_{L^2}$. See Corollary 4.2 in \cite{Kwak2018-2}.
\end{rem}

The following lemma gives a significant different estimates by exploiting the weighted function spaces compared to those in previous results \cite{Kwak2018-2}.

\begin{lem}\label{lem:energy1-1}
	Let $T \in (0,1]$, $k_i\in \Z_+$, and $v_i \in F_{k_i}(T)$, $i=1,2,3,4$. We further assume $k_1 \le k_2 \le k_3 \le k_4$ with $k_4 \ge 10$. Then we have
%
	
%
%
%
\begin{align*}
		\left| \sum_{n_4,\overline{\N}_{3,n_4}} \int_0^T\wh{v}_1(n_1)\wh{v}_2(n_2)\wh{v}_3(n_3)\wh{v}_4(n_4) \; dt\right| \lesssim   C(\textbf{k}) \prod_{i=1}^4\norm{v_i}_{F_{k_i}(T)}
\end{align*}
where
\begin{align*}\label{eq:energy1-1.}
C(\textbf{k})= \left\{ \begin{aligned}
	2^{k_4/2}           &\qquad \mbox{ if } |k_1 -k_4| \le 5,\\
	2^{-k_4}2^{k_1/2}   & \qquad\mbox{ if } |k_2-k_4| \le 5 \mbox{ and } k_1 \le k_4 -10,\\	
	2^{-k_4}2^{k_1/2}   &\qquad \mbox{ if } |k_3-k_4| \le 5, k_2 \le k_4 -10, \mbox{ and } |k_1-k_2| \le 5,\\	
	2^{-(k_4-k_2)}2^{-k_4}   &\qquad \mbox{ if } |k_3-k_4| \le 5, k_2 \le k_4 -10, \mbox{ and } k_1 \le k_2 -10,\\	
		\end{aligned}    \right.
\end{align*}
with $\textbf{k}=(k_1,k_2,k_3,k_4)$.
\end{lem}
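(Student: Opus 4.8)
\textbf{Proof proposal for Lemma \ref{lem:energy1-1}.}

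The plan is to reduce the time-integral expression to a sum of localized $J$-functionals and then apply the block estimates of Lemma \ref{lem:tri-L2}, being careful to exploit the weight $\beta_{j,k}$ in the $X_k$-norm (and hence in the $F_k(T)$-norm) in precisely the one case where the naive bound loses a logarithm. First I would insert a partition of unity in time: since each $v_i$ is at frequency $\sim 2^{k_i}$ and $k_4 \ge 10$, I cover $[0,T]$ by $O(2^{2k_4})$ intervals of length $2^{-2k_4}$, using a smooth partition $\sum_m \gamma(2^{2k_4}t - m) \equiv 1$ with $\gamma \in \Sch(\R)$. On each such interval the functions $\eta_0(2^{2k_4}(t-t_m))v_i$ may be replaced by their $F_k$-extensions, whose Fourier transforms lie in $X_{k_i}$ with norm controlled by $\norm{v_i}_{F_{k_i}(T)}$ (by definition of $F_k$ and \eqref{eq:prop2}). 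Because $k_1 \le k_2 \le k_3 \le k_4$, at most $O(2^{2k_4}\cdot 2^{-2k_4}) = O(1)$ — more precisely the number of overlapping intervals that matter after also localizing $v_1,v_2,v_3$ at their own (coarser) time scales is summable — so up to harmless constants the whole estimate reduces to a single block $\big|\sum_{n_4,\overline{\N}_{3,n_4}}\int_\R f_1 f_2 f_3 f_4\big|$, i.e. to bounding $J(f_1,f_2,f_3,f_4)$ with $f_i$ supported in $I_{k_i}$ and $\norm{f_i}_{X_{k_i}} \lesssim \norm{v_i}_{F_{k_i}(T)}$. Here I am using the convolution/duality structure noted in Remark \ref{rem:duality} together with $\overline{\zeta}_1 + \cdots + \overline{\zeta}_4 = -G$.

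Next I would decompose each $f_i$ dyadically in modulation, $f_i = \sum_{j_i} f_{i,j_i}$ with $f_{i,j_i}$ supported in $D_{k_i,j_i}$, so that
\begin{align*}
\Big|\sum_{n_4,\overline{\N}_{3,n_4}}\int_\R \prod_{i=1}^4 f_i\Big|
\le \sum_{j_1,j_2,j_3,j_4} J(f_{1,j_1},f_{2,j_2},f_{3,j_3},f_{4,j_4}),
\end{align*}
and recall from \eqref{eq:support property} that the summand vanishes unless $2^{j_{max}} \sim \max(2^{j_{sub}},|G|)$. Now I split into the four regimes of $C(\mathbf k)$. In the regime $|k_1-k_4|\le 5$ (all frequencies comparable) I use part (a), \eqref{eq:tri-block estimate-a1}: here $k_{min},k_{thd}\sim k_4$ is the wrong sign, but in fact $|G|$ can be as small as $1$ so $j$'s are unconstrained; the bound $2^{(j_{min}+j_{thd})/2}2^{(k_{min}+k_{thd})/2}$ against $\prod 2^{j_i/2}\beta_{j_i,k_i}$ after summing in $j$ gives $2^{k_4}$ from the frequency factors times a factor I must trim using the $2^{-j_{max}/2}$ hidden in $2^{(j_{min}+j_{thd})/2}=2^{(j_1+\cdots+j_4)/2}2^{-(j_{sub}+j_{max})/2}$ — this yields $2^{k_4}\cdot 2^{-k_4/2}=2^{k_4/2}$, matching $C(\mathbf k)$. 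In the three high$\times$low regimes ($k_{thd}\le k_4-10$) I invoke part (b): the resonance identity \eqref{eq:modulation} shows $|G|\sim 2^{k_4}\cdot 2^{k_{\ast}}\cdot 2^{2k_4}$ where $2^{k_\ast}$ is the size of $(n_i+n_j)$ for the two small indices, so $2^{j_{max}}\gtrsim 2^{3k_4+k_\ast}$ is \emph{large}. Feeding this into \eqref{eq:tri-block estimate-b1.2} and summing $\sum_{j_i}2^{j_i/2}\beta_{j_i,k_i}\cdot 2^{(j_1+\cdots+j_4)/2}2^{-(j_{sub}+j_{max})/2}2^{k_{thd}/2}\big/\prod 2^{j_i/2}$ gives geometric series in each $j_i$ except at $j=j_{max}\sim 3k_4+k_\ast$, where the weight $\beta_{j_{max},k_4}=1+2^{\frac14(j_{max}-5k_4)}$ stays bounded (since $j_{max}-5k_4 \lesssim k_\ast - 2k_4 \le 0$), so no logarithm is produced and the net gain is $2^{k_{thd}/2}2^{-j_{max}/2}\sim 2^{k_{thd}/2}2^{-(3k_4+k_\ast)/2}$; reading off $k_{thd}$ and $k_\ast$ in each of the three sub-cases reproduces $2^{-k_4}2^{k_1/2}$, $2^{-k_4}2^{k_1/2}$, and $2^{-(k_4-k_2)}2^{-k_4}$ respectively.

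The main obstacle is the bookkeeping in the third and fourth sub-cases ($|k_3-k_4|\le 5$ with $k_1,k_2$ small), where one must identify which pair $(n_i+n_j)$ is forced to be small and verify that $|G|$ is genuinely of the claimed size $\sim 2^{k_\ast+3k_4}$ rather than accidentally degenerate — the set $\overline{\N}_{3,n_4}$ excludes $(n_i+n_j)=0$, so $|G|\ge c\,2^{3k_4}$ always, but to get the sharper $k_2$-gain in the fourth case (when $k_1\le k_2-10$) one needs $|n_1+n_2|\sim 2^{k_2}$ and $|n_2+n_3|\cdot|n_3+n_1|\sim 2^{2k_4}$, which requires a short case analysis on the signs and sizes of $n_1,n_2$ relative to $n_3\sim -n_4$. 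The second potential pitfall is making sure the weight never hurts: in every regime one checks $j_{max}\le 5k_4$ (so that Lemma \ref{lem:prop of Xk} and the $\beta_{j,k}\lesssim 1$ bound on the relevant pieces apply) — this holds because $j_{max}\sim 3k_4+k_\ast$ with $k_\ast\le k_4$ in the generic situation, and the boundary contributions where $j_{max}$ would exceed $5k_4$ are where the weight $\beta$ actively helps rather than hurts, exactly as designed. Once these size identities are pinned down, the $j$-summations are routine geometric series and Schur's test / Cauchy–Schwarz over $n$, and the four bounds follow.
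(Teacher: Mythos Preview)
Your overall architecture (partition $[0,T]$ at scale $2^{-2k_4}$, reduce to the $J$-functional, decompose in modulation, apply Lemma \ref{lem:tri-L2}) is the paper's approach, but three concrete errors derail the execution.

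\textbf{(1) The $2^{2k_4}$ factor does not disappear.} Your sentence ``at most $O(2^{2k_4}\cdot 2^{-2k_4})=O(1)$'' is wrong: the partition produces $\sim 2^{2k_4}$ sub-intervals, and since the $F_{k_i}$-norms are \emph{suprema} over time translations, localizing $v_1,v_2,v_3$ at their own coarser scales gives no cancellation. The paper keeps the prefactor $2^{2k_4}$ explicitly (see \eqref{eq:energy1-1.6}). What compensates it is the constraint $j_i\ge 2k_4$ coming from \eqref{eq:prop1} after localization at scale $2^{-2k_4}$, which you never invoke.

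\textbf{(2) The resonance size is off by $2^{k_4}$.} In the high--low regimes one has, from \eqref{eq:modulation}, three linear factors and one quadratic factor: $(n_1+n_2)\sim 2^{k_\ast}$, while $(n_2+n_3)\sim(n_3+n_1)\sim 2^{k_4}$ and $n_1^2+\cdots\sim 2^{2k_4}$, so $|G|\sim 2^{4k_4+k_\ast}$ and hence $j_{\max}\gtrsim 4k_4+k_\ast$, not $3k_4+k_\ast$.

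\textbf{(3) The role of the weight is inverted.} In the fourth case (the only new one; the paper refers the others to \cite{Kwak2018-2}) the worst configuration is $j_2=j_{\max}$, i.e.\ the \emph{low-frequency} factor $f_2$ carries the largest modulation. Then $(k_2,j_2)=(k_{thd},j_{\max})$ triggers \eqref{eq:tri-block estimate-b1.2} with loss $2^{k_2/2}$, and the relevant weight is $\beta_{j_2,k_2}$, not $\beta_{j_{\max},k_4}$. Since $j_2\ge 4k_4+k_2$ forces $j_2-5k_2\ge 4(k_4-k_2)$, one has $\beta_{j_2,k_2}\gtrsim 2^{k_4-k_2}$, which is \emph{large}; dividing by it is exactly what yields the extra $2^{-(k_4-k_2)}$. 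Your claim that the weight ``stays bounded'' throws away the whole point of introducing it, and with your bookkeeping (even after fixing (1) and (2)) you cannot recover the stated $C(\mathbf k)=2^{-(k_4-k_2)}2^{-k_4}$.
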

 
\begin{proof}
	Comparing with Lemma 6.4 in \cite{Kwak2018-2}, the only difference appears in the case when $|k_3-k_4| \le 5, k_2 \le k_4 -10$, and $k_1 \le k_2 -10$, thus we focus on this case.

	We follow the standard extensions $\wt{v}_i \in F_{k_i}$ so that $\norm{\wt{v}_{i}}_{F_{k_i}} \le 2 \norm{v_i}_{F_{k_i}(T)}$, $i=1,2,3,4$. Let $\rho : \R \to [0,1]$ be a smooth partition of unity function with $\sum_{m \in \Z}\rho^4(x-m) = 1$, $x \in \R$. Then, we obtain 
	\begin{equation}\label{eq:energy1-1.5}
		\begin{aligned}
			&\Big| \sum_{n_4,\overline{\N}_{3,n_4}} \int_0^T\wh{\wt{v}}_1(n_1)\wh{\wt{v}}_2(n_2)\wh{\wt{v}}_3(n_3)\wh{\wt{v}}_4(n_4) \;dt \Big|\\
			&\lesssim \sum_{|m| \lesssim 2^{2k_4}} \Big| \sum_{n_4,\overline{\N}_{3,n_4}} \int_{\R}\left(\rho(2^{2k_4}t-m)\mathbf{1}_{[0,T]}(t)\wh{\wt{v}}_1(n_1)\right)  \\
& \hspace{6em}\cdot \left(\rho(2^{2k_4}t-m)\wh{\wt{v}}_2(n_2)\right)\cdot \left(\rho(2^{2k_4}t-m)\wh{\wt{v}}_3(n_3)\right) \cdot \left(\rho(2^{2k_4}t-m)\wh{\wt{v}}_4(n_4)\right) \;dt \Big|
		\end{aligned}
	\end{equation}

		Then, for $m$ such that $\emptyset \neq \supp \rho(2^{2k_4}t-m) \cap [0,T] \neq \supp \rho(2^{2k_4}t-m)$, one can readily handle the right-hand side of \eqref{eq:energy1-1.5} (see \cite{guo2012} for the details), thus we only consider the case 
			\begin{equation}\label{eq:A^c}
				\rho(2^{2k_4}t-m)\mathbf{1}_{[0,T]}(t)\wh{\wt{v}}_1(n_1) = \rho(2^{2k_4}t-m)\wh{\wt{v}}_1(n_1).
				\end{equation} 
			Note that the number of $m$ for which \eqref{eq:A^c} is vaild is $\lesssim 2^{2k_4}$.
	Let $f_{k_i} = \ft[\rho(2^{2k_4}t-m)\wh{\wt{v}}_i(n_i)]$ and $f_i = \eta_{j_i}(\tau - \mu(n))f_{k_i}$, $i=1,2,3,4$. By parseval's identity and \eqref{eq:prop1}, the right-hand side of \eqref{eq:energy1-1.5} is dominated by
	\begin{equation}\label{eq:energy1-1.6}
	 \sup_m 2^{2k_4} \sum_{j_1,j_2,j_3,j_4 \ge 2k_4} |J(f_1,f_2,f_3,f_4)|,
	\end{equation}
where the supremum is taken over $m$ satisfying \eqref{eq:A^c}.

	Using the frequency relations $|k_3 - k_4| \le 5$, $k_2 \le k_4 -10$, and $k_1  \le k_2 - 10$ in addition to \eqref{eq:support property} and \eqref{eq:modulation}, we see that $j_{max} \ge 4k_4 + k_2$. The worst bound of $|J(f_{1},f_{2},f_{3},f_{4})|$ should appear when $j_2 = j_{max}$ and $j_{sub} \le 4k_4 + k_2 - 10$ hold. By \eqref{eq:tri-block estimate-b1.2}, we have 

	\begin{align*}
		\eqref{eq:energy1-1.6} &\lesssim 2^{2k_4} \sum_{\substack{j_1,j_2,j_3,j_4 \ge 2k_4\\ j_2 = j_{max}\\j_{sub} \le 4k_4 + k_2 - 10}}2^{(j_1+j_2+j_3+j_4)/2}2^{-(j_{max}+j_{sub})/2}2^{k_2/2} \norm{f_2}_{L_{\tau}^2\ell_n^2}\prod_{i=1,3,4} \norm{f_i}_{L_{\tau}^2\ell_n^2}\\
&\les  2^{-(k_4-k_2)} 2^{-k_4} \left(\sum_{j_2 \ge 4k_4 +k_2}2^{j_2/2} \beta_{j_2,k_2}\norm{f_2}_{L_{\tau}^2\ell_n^2}\right)\prod_{i=1,3,4} \norm{v_{i}}_{F_{k_i}(T)}\\
		&\lesssim 2^{-(k_4-k_2)} 2^{-k_4} \prod_{i=1}^4 \norm{v_{i}}_{F_{k_i}(T)},
	\end{align*}
which proves Lemma \ref{lem:energy1-1}.
\end{proof}

For commutator estimates, we define  $\psi(n):=n\chi'(n)$ and $\psi_k(n) = \psi(2^{-k}n)$ for $k \ge 1$, where $\chi$ is defined in \eqref{eq:cut-off1}. Then we get
\[
\psi_k(n) = n\chi_k'(n).
\]

\begin{rem}\label{rem:even real}
It is easy to see that $\chi_k$ and $\psi_k$ are even and real-valued functions. These properties will be used to remove quintic resonances appeared in modified energies, see Remark \ref{rem:resonant2}.
\end{rem}
\begin{rem}
A  new cut-off function $\psi_k$ is required to be defined only for the second-order Mean Value Theorem in the commutator estimates (see Lemma \ref{lem:commutator1}), otherwise, it plays only a role of frequency support.
\end{rem}

\begin{lem}\label{lem:commutator1}
	Let $T \in (0,1]$, $k,k_1,k_2 \in \Z_+$ satisfying $k_1,k_2 \le k -10$, $u_i \in F_{k_i}(T)$, $i=1,2$, and $v \in F^0(T)$. Then, we have
\begin{equation*}
		\begin{aligned}
			\Bigg|\sum_{n,\overline{\N}_{3,n}}&\int_0^T  \chi_k(n)n[\chi_{k_1}(n_1)\wh{u}_1(n_1)\chi_{k_2}(n_2)\wh{u}_2(n_2)n_3^2\wh{v}(n_3)]\chi_k(n)\wh{v}(n) \;dt\\
			&\hspace{-2em}+ \frac12\sum_{n,\overline{\N}_{3,n}}\int_0^T (n_1+n_2)\chi_{k_1}(n_1)\wh{u}_1(n_1)\chi_{k_2}(n_2)\wh{u}_2(n_2)\chi_k(n_3)n_3\wh{v}(n_3)\chi_k(n)n\wh{v}(n) \;dt\\
			&\hspace{-2em}- \sum_{n,\overline{\N}_{3,n}}\int_0^T (n_1+n_2)\chi_{k_1}(n_1)\wh{u}_1(n_1)\chi_{k_2}(n_2)\wh{u}_2(n_2)\psi_k(n_3)n_3\wh{v}(n_3)\chi_k(n)n\wh{v}(n) \;dt \Bigg|\\
			&~{} \hspace{6em}\lesssim  C(k,k_1,k_2) 
			 \norm{P_{k_1}u_1}_{F_{k_1}(T)}\norm{P_{k_2}u_2}_{F_{k_2}(T)}\sum_{|k-k'|\le 5} \norm{P_{k'}v}_{F_{k'}(T)}^2
		\end{aligned}
	\end{equation*}
and
\begin{align*}
	\begin{aligned}
		\Bigg|\sum_{n,\overline{\N}_{3,n}}&\int_0^T \chi_k(n)[(n_1+n_2)\chi_{k_1}(n_1)\wh{u}_1(n_1)\chi_{k_2}(n_2)\wh{u}_2(n_2)n_3\wh{v}(n_3)]\chi_k(n)n\wh{v}(n)\; dt\\
		&\hspace{-2em}- \sum_{n,\overline{\N}_{3,n}}\int_0^T (n_1+n_2)\chi_{k_1}(n_1)\wh{u}_1(n_1)\chi_{k_2}(n_2)\wh{u}_2(n_2)\chi_k(n_3)n_3\wh{v}(n_3)\chi_k(n)n\wh{v}(n)\;dt \Bigg|\\
		&~{} \hspace{7em}\lesssim  C(k,k_1,k_2) 
			 \norm{P_{k_1}u_1}_{F_{k_1}(T)}\norm{P_{k_2}u_2}_{F_{k_2}(T)}\sum_{|k-k'|\le 5} \norm{P_{k'}v}_{F_{k'}(T)}^2,
	\end{aligned}
\end{align*}
	where
\[
C(k,k_1,k_2) = \begin{cases}
	2^{2k_2}  & \mbox{if }\;\; |k_1 -k_2| \le 5,\\
	2^{-(k-k_2)}2^{2k_2}& \mbox{if }\;\; k_1 \le k_2 -10.
\end{cases}
\]
\end{lem}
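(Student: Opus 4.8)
\textbf{Proof proposal for Lemma \ref{lem:commutator1}.}

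The plan is to exploit the same localization-in-time machinery used in the proof of Lemma \ref{lem:energy1-1}: extend the $v$'s and $u_i$'s to global functions $\wt v, \wt u_i$ with comparable norms, decompose the time integral via the partition of unity $\sum_m \rho^4(2^{2k}t-m)=1$, so that on each time interval of length $2^{-2k}$ the four-fold product is governed by $J(f_1,f_2,f_3,f_4)$ with $f_i = \eta_{j_i}(\tau-\mu(n_i))\ft[\rho(2^{2k}t-m)\wh{\wt{(\cdot)}}_i]$. The whole point of the lemma, though, is that the naive application of Lemma \ref{lem:tri-L2} to the \emph{unsymmetrized} term (as in \eqref{eq:energyfail}) loses two derivatives, so before doing any block estimate I would first perform the algebraic commutator manipulation that produces the three (resp. two) terms in the statement. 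Concretely: in the first sum relabel so that $n_3\to n$ is the output frequency and $n$ becomes a summation index $n_3'$; using $n = n_1+n_2+n_3$ on $\overline{\N}_{3,n}$, write $n^2 = (n_1+n_2)^2 + 2(n_1+n_2)n_3 + n_3^2$ and, crucially, $\chi_k(n) n - \chi_k(n_3) n_3 = (n_1+n_2)\chi_k(n_3) + \big(\chi_k(n)-\chi_k(n_3)\big)n$, then apply the Mean Value Theorem to $\chi_k(n)-\chi_k(n_3)$; the first-order term reproduces the $\frac12(n_1+n_2)$-correction, and the $\psi_k$-term captures the second-order MVT remainder (this is exactly why $\psi_k(n)=n\chi_k'(n)$ was introduced, cf.\ Remark \ref{rem:even real}). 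After these cancellations every surviving term carries at most \emph{one} power of the high frequency $2^k$ on the two highest-frequency factors combined, i.e.\ each term is bounded by $2^{k}$ times a product of four $L^2_{\tau}\ell^2_n$ norms of modulation-localized pieces, rather than $2^{3k}$.

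Having reduced to this ``one-derivative'' form, I would run the block estimate. Since $k_1,k_2\le k-10$ and the output is $\sim 2^{k}$, on $\Gamma_4$ the two frequencies $\sim 2^k$ are $n_3$ and $n$, so $k_{\max}\sim k_{\text{sub}}\sim k$ and $k_{\text{thd}}=k_{\min}$ equals $k_1$ if $k_1\le k_2-10$ (the ``strictly separated'' case) or is comparable to $k_2$ if $|k_1-k_2|\le 5$. Here the resonance function is $|G|\sim (n_1+n_2)(n_2+n_3)(n_3+n_1)\cdot 2^{2k}\sim 2^{k_2}\cdot 2^{4k}=2^{4k+k_2}$ when $k_1\le k_2-10$ (and $\sim 2^{4k+k_1}$ up to the lower-frequency factor in the comparable case), forcing $j_{\max}\gtrsim 4k+k_2$ via \eqref{eq:support property}. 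Then I apply part (b) of Lemma \ref{lem:tri-L2}: in the generic sub-case the gain is $2^{-(j_{\max}+j_{\text{sub}})/2}2^{(j_1+j_2+j_3+j_4)/2}2^{k_{\min}/2}$, and after summing the $j_i\ge 2k$ against the $2^{j/2}\beta_{j,k}$ weights — here the restriction $l=2k\le 5k$ in Lemma \ref{lem:prop of Xk} is used, and the weight $\beta_{j,k}$ on the high-modulation factor is harmless since $4k+k_2\le 5k$ precisely when $k_2\le k$ — together with the outer factor $2^{2k}$ from the number of time-intervals $m$, the total power of $2$ works out to $2^{2k_2}$ (resp.\ $2^{-(k-k_2)}2^{2k_2}$), matching $C(k,k_1,k_2)$. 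The extra decay $2^{-(k-k_2)}$ in the separated case is exactly the payoff of the large modulation $2^{4k+k_2}$ being strictly bigger than $2^{5k}$'s competitor, i.e.\ of $j_{\max}$ being pushed up by the size of $k_2$ relative to $k_1$.

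The \emph{second} inequality in the lemma is lighter: the term being subtracted already has the ``correct'' frequency placement, and the difference $\chi_k(n)-\chi_k(n_3)$ hit by a single MVT produces one factor of $(n_1+n_2)2^{-k}\cdot 2^{k}$-type gain, so one only needs part (b) of Lemma \ref{lem:tri-L2} one more time with the same resonance bookkeeping; I expect to be able to say ``by the same argument as above'' after exhibiting the one-line algebraic identity. The main obstacle I anticipate is purely bookkeeping: tracking which of $n_3,n$ (or, after relabeling, which two of the four frequencies) is $k_{\max}$ versus $k_{\text{sub}}$ through the commutator expansion, and making sure that in the case $|k_1-k_2|\le 5$ the symmetrization does not accidentally move a derivative onto a low-frequency factor in a way that Lemma \ref{lem:tri-L2}(b) can't absorb — i.e.\ verifying that after all cancellations no term survives in which $n_3^2$ (two high derivatives) sits on a factor whose modulation is \emph{not} forced to be $\gtrsim 2^{4k+k_{\min}}$. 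One has to check the few ``bad'' modulation configurations ($j_3$ or $j_4$ maximal with small sub-modulation) by hand and confirm \eqref{eq:tri-block estimate-b1.2} still gives the stated $C(k,k_1,k_2)$; this is the analogue of the single case singled out in the proof of Lemma \ref{lem:energy1-1} and should be checkable in the same way.
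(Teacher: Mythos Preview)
Your proposal is correct and follows essentially the same route as the paper, which does not give a self-contained proof but refers to Lemma 6.5 in \cite{Kwak2018-2} for the commutator/MVT reduction and notes that the only new ingredient---the extra factor $2^{-(k-k_2)}$ in the case $k_1\le k_2-10$---comes from feeding the refined Lemma \ref{lem:energy1-1} into that argument. One clarification on the source of that gain: it is not that $j_{\max}\sim 4k+k_2$ exceeds some ``$2^{5k}$ competitor,'' but rather that in the worst configuration $j_{\max}$ sits on the \emph{low}-frequency factor at scale $2^{k_2}$, and there the weight satisfies $\beta_{j_{\max},k_2}\gtrsim 2^{(4k+k_2-5k_2)/4}=2^{k-k_2}$, so passing from $2^{j_2/2}\|f_2\|_{L^2}$ to $2^{j_2/2}\beta_{j_2,k_2}\|f_2\|_{L^2}\le \|f_{k_2}\|_{X_{k_2}}$ buys exactly $2^{-(k-k_2)}$---this is the computation displayed at the end of the proof of Lemma \ref{lem:energy1-1}.
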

\begin{rem}
	Commutator estimates above are slightly improved compared with Lemma 6.5 in \cite{Kwak2018-2} thanks to the refined version of Lemma \ref{lem:energy1-1}, but the proof of Lemma \ref{lem:commutator1} is analogous.
\end{rem}

\section{Refined nonlinear and energy estimates}\label{sec:3}

\subsection{Refined nonlinear estimates}\label{sec:nonli}
In this section, we introduce the following nonlinear estimates.
\begin{prop}\label{prop:nonlinear1}
	(a) If $s \ge 1$\footnote{Compared with the previous nonlinear estimates in \cite{Kwak2018-2}, the weight condition $\beta_{j,k}$ exhibits the worse estimates in terms of regularity. Considering the function spaces $N^s(T),\,F^s(T)$ without the weight condition, we have \eqref{eq:nonlinear1} up to $s> \frac12$.}, $T \in (0,1]$ and $u,v,w, v_\ell \in F^s(T)$, $\ell=1,2,3,4,5$, then
	\begin{equation}\label{eq:nonlinear1}
		\begin{aligned}
			\sum_{m=1,2,3} \norm{N_{m}(u,v,w)}_{N^s(T)} &+ \norm{N_4(v_1,v_2,v_3,v_4,v_5)}_{N^s(T)} \\
			&\hspace{1em}\lesssim \norm{u}_{F^s(T)}\norm{v}_{F^s(T)}\norm{w}_{F^s(T)} + \prod_{\ell=1}^{5}\norm{v_\ell}_{F^s(T)}.
		\end{aligned}
	\end{equation}
	
	(b) If $T \in (0,1]$, $u,v, v_\ell \in F^2(T)$, $\ell=1,2,3,4$, and $w, v_5 \in F^0(T)$,  then
	\begin{equation}\label{eq:nonlinear2}
		\begin{aligned}
			\sum_{m=1,2,3}& \norm{N_{m}(u,v,w)}_{N^0(T)} + \norm{N_4(v_1,v_2,v_3,v_4,v_5)}_{N^0(T)}\\
			&\hspace{3em} \lesssim \norm{u}_{F^{1+}(T)}\norm{v}_{F^{2}(T)}\norm{w}_{F^0(T)} + \prod_{\ell=1}^{4}\norm{v_\ell}_{F^{1+}(T)}\norm{v_5}_{F^0(T)}.
		\end{aligned}
	\end{equation}
\end{prop}

Comparing with Proposition 5.11 in \cite{Kwak2018-2}, nonlinear estimates get more delicate from the fact that the derivative loss arising in the (cubic) high-low interactions where the output component has low frequency, but large modulation. Thus, we only focus on these regimes.
%

\begin{rem}\label{rem:NonRed}
Obviously, the case in which two derivatives are taken in the high frequency modes exhibits the most delicate. Thus, it suffices to control 
\begin{equation}\label{eq:nonres_0}
	\norm{P_{k_4}N_2(P_{k_1}u,P_{k_2}v,P_{k_3}w)}_{N_{k_4}},
\end{equation}
where $k_4 \ll k_3 = k_{max}$. Since $N_{k_4}$-norm is taken on the time intervals of length $2^{-2k_4}$, while each $F_{k_i}$-norm is defined on shorter (time) intervals of length $2^{-2k_i}$, $i=1,2,3$, we further decompose the time interval into $2^{2k_3 - 2k_4}$ sub-intervals of length $2^{-2k_3}$. For this purpose, let take $\sigma: \R \to [0,1]$, as a partition of unity, supported in $[-1,1]$ with $ \ \sum\limits_{m\in \Z} \sigma^3(x-m) \equiv 1$. Then we apply the definition of $N_{k_4}$-norm to \eqref{eq:nonres_0}, obtaining the following term
\begin{equation*}
	\begin{split}
		\sup_{t_k\in \R}&2^{k_4}2^{2k_3}\Big\|(\tau_4-\mu(n_4) +i 2^{2k_4})^{-1}\mathbf{1}_{I_{k_4}}\\
		&\qquad\qquad\times  \sum_{|m| \le C 2^{2k_3-2k_4}} \ft[\eta_0(2^{2k_4}(t-t_k))\sigma (2^{2k_3}(t-t_k)-m)P_{k_1}u]\\
		&\hspace{10em}\ast \ft[\eta_0(2^{2k_4}(t-t_k))\sigma (2^{2k_3}(t-t_k)-m)P_{k_2}v]\\
		&\hspace{10em}\ast \ft[\eta_0(2^{2k_4}(t-t_k))\sigma (2^{2k_3}(t-t_k)-m)P_{k_3}w]\Big\|_{X_{k_4}}.
	\end{split}
\end{equation*}
Let us abbreviate 
\[
\begin{aligned}
&u_{k_1} := \ft\left[\eta_0(2^{2k_4}(t-t_k))\sigma (2^{2k_3}(t-t_k))P_{k_1}u\right], \\
&v_{k_2} := \ft\left[\eta_0(2^{2k_4}(t-t_k))\sigma (2^{2k_3}(t-t_k))P_{k_2}v\right], \\
&w_{k_3} := \ft\left[\eta_0(2^{2k_4}(t-t_k))\sigma (2^{2k_3}(t-t_k))P_{k_3}w\right].
\end{aligned}
\]
We further decompose the modulation of $u_{k_1},v_{k_2}$ and $w_{k_3}$ into dyadic pieces as $f_{1}(\tau_1,n_1) = u_{k_1}(\tau_1,n_1)\eta_{j_1}(\tau_1 - \mu(n_1))$, $f_{2}(\tau_2,n_2) = v_{k_2}(\tau_2,n_2)\eta_{j_2}(\tau_2 - \mu(n_2))$ and $f_{3}(\tau_3,n_3) = w_{k_3}(\tau_3,n_3)\eta_{j_3}(\tau_3 - \mu(n_3))$, respectively.
\end{rem}

\begin{lem}[High-high-high $\Rightarrow$ low]\label{lem:nonres3} Let $k_3 \ge 20$, $|k_1-k_3|, |k_2-k_3| \le 5$ and $k_4 \le k_3-10$. Then, we have
\begin{align}\label{eq:nonres3-1}
\norm{P_{k_4}N_2(P_{k_1}u,P_{k_2}v,P_{k_3}w)}_{N_{k_4}} \lesssim 2^{\frac94 k_3}2^{- \frac74 k_4}\norm{P_{k_1}u}_{F_{k_1}}\norm{P_{k_2}v}_{F_{k_2}}\norm{P_{k_3}w}_{F_{k_3}}.
\end{align}
\end{lem}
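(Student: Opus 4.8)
\emph{Plan.} I would start from the reduction already set up in Remark \ref{rem:NonRed}. Since $2^{k_4}$ is the output (and lowest) frequency while, by $|k_1-k_3|,|k_2-k_3|\le 5$ and $k_4\le k_3-10$, the two derivatives $n_3^2$ sit at the maximal frequency scale $2^{k_3}$, one localizes the $N_{k_4}$-output at time scale $2^{-2k_4}$, splits that window into $\lesssim 2^{2(k_3-k_4)}$ pieces of length $2^{-2k_3}$, and extracts the derivative factors $2^{k_4}$ (from the outer $n$) and $2^{2k_3}$ (from $n_3^2$). After dualizing the $X_{k_4}$-norm (equivalently, Remark \ref{rem:duality}) and decomposing each factor into modulation blocks $f_i=\eta_{j_i}(\zeta_i)f_{k_i}$ ($i=1,2,3$, with $\|f_{k_i}\|_{X_{k_i}}\lesssim\|P_{k_i}(\cdot)\|_{F_{k_i}}$ and effective range $j_i\ge 2k_3$) and $f_4=\eta_{j_4}(\zeta_4)g_4$, $\|g_4\|_{L^2}\le 1$, it suffices to bound
\begin{align*}
2^{4k_3-k_4}\sum_{j_4\ge 2k_4}2^{j_4/2}\beta_{j_4,k_4}\,2^{-\max(j_4,2k_4)}\sup_{g_4}\sum_{j_1,j_2,j_3\ge 2k_3}J(f_1,f_2,f_3,\eta_{j_4}(\zeta_4)g_4)
\end{align*}
by the right-hand side of \eqref{eq:nonres3-1}.

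The first step is to pin down the resonance. From \eqref{eq:modulation}, the identities $n_1+n_2=n_4-n_3$ (and cyclically) together with the frequency hypotheses force $|n_i+n_j|\sim 2^{k_3}$ for all three pairwise sums and $n_1^2+n_2^2+n_3^2+n_4^2\sim 2^{2k_3}$, so $|G|\sim 2^{5k_3}$; by \eqref{eq:support property} this yields $2^{j_{max}}\gtrsim 2^{5k_3}$. Since $k_4=k_{min}$ here, $k_{thd}$ is one of $k_1,k_2,k_3$ and hence satisfies $k_{thd}\ge k_{max}-10$; thus part (b) of Lemma \ref{lem:tri-L2} is not available, and I would use part (a), namely $J(f_1,f_2,f_3,\eta_{j_4}(\zeta_4)g_4)\lesssim 2^{(j_{min}+j_{thd})/2}2^{(k_{min}+k_{thd})/2}\|f_1\|_{L^2}\|f_2\|_{L^2}\|f_3\|_{L^2}$, with $2^{(k_{min}+k_{thd})/2}\sim 2^{(k_3+k_4)/2}$.

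It then remains to carry out the modulation summation, and the mechanism is that the large resonance $j_{max}\gtrsim 5k_3$ more than compensates the derivative loss. Writing $2^{(j_{min}+j_{thd})/2}=2^{(j_1+j_2+j_3+j_4)/2}2^{-(j_{max}+j_{sub})/2}$, one distributes $2^{j_i/2}$ to each of $i=1,2,3$ (so that summing $2^{j_i/2}\beta_{j_i,k_i}\|f_i\|_{L^2}$ rebuilds $\|f_{k_i}\|_{X_{k_i}}$), distributes $2^{j_4/2}$ to the output (where it meets $2^{j_4/2}\beta_{j_4,k_4}2^{-\max(j_4,2k_4)}$), and then sums the leftover $2^{-(j_{max}+j_{sub})/2}$ from the largest index downward. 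Combined with the prefactor $2^{4k_3-k_4}$ and the block factor $2^{(k_3+k_4)/2}$ this produces $2^{\frac94 k_3}2^{-\frac74 k_4}$; the extremal sub-case, which saturates the bound, is the one in which $j_4$ itself carries the maximal modulation, so that $\beta_{j_4,k_4}\sim 2^{\frac14(j_4-5k_4)}$ is balanced against $2^{-j_4}$ from the resolvent and summed for $j_4\ge 5k_3$.

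The step I expect to be the main obstacle is precisely this accounting, because of the weight. The factor $\beta_{j_4,k_4}$ sits on the \emph{low}-frequency output and is genuinely large — of size $\sim 2^{\frac54(k_3-k_4)}$ — once $j_4$ is comparable to $5k_3$; the argument must be arranged so that this growth is at every stage dominated by the resolvent decay $2^{-\max(j_4,2k_4)}$ and by the portion of the resonance gain $2^{-(j_{max}+j_{sub})/2}$ not already spent reconstructing the three input norms (in particular using that $j_{thd}=\min(j_1,j_2,j_3)\ge\max(2k_3,j_4)$ when $j_4=j_{min}$), so that the $\ell^1$-sum over $j_4$ converges with no logarithmic loss in $k_3$. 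Beyond this careful bookkeeping, the proof uses only Lemma \ref{lem:tri-L2}(a) and the resonance identity \eqref{eq:modulation}.
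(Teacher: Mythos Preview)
Your proposal is correct and follows essentially the same route as the paper: both start from the reduction in Remark~\ref{rem:NonRed}, apply Lemma~\ref{lem:tri-L2}(a) (part~(b) being inapplicable since $k_{thd}\ge k_{max}-10$), use \eqref{eq:modulation} to obtain $|G|\sim 2^{5k_3}$ and hence $2^{j_{max}}\gtrsim 2^{5k_3}$, and identify the sub-case $j_4=j_{max}\ge 5k_3$ as the one saturating the bound. The paper's computation makes explicit one point you leave slightly implicit: in that critical sub-case one uses $j_{sub}=\max(j_1,j_2,j_3)\ge 2k_3$ to replace $2^{-j_{sub}/2}$ by the constant $2^{-k_3}$, after which the three sums $\sum_{j_i\ge 2k_3}2^{j_i/2}\|f_i\|_{L^2}$ reconstruct the $X_{k_i}$-norms cleanly and the remaining $j_4$-sum $\sum_{j_4\ge 5k_3}2^{-j_4/2}\beta_{j_4,k_4}\lesssim 2^{-\frac54(k_3+k_4)}$ closes the arithmetic to $2^{\frac94 k_3-\frac74 k_4}$.
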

\begin{proof}
	By \eqref{eq:tri-block estimate-a1} and Lemma \ref{lem:prop of Xk}, we get
	\[\begin{aligned} \mbox{LHS of }\eqref{eq:nonres3-1} \lesssim &2^{4k_3}2^{-k_4}\sum_{j_4 \ge 0} \frac{2^{j_4/2}}{\max(2^{j_4},2^{2k_4})}\beta_{j_4,k_4}\\
		&\qquad\times \sum_{j_1,j_2,j_3 \ge 2k_3} 2^{(j_{min}+j_{thd})/2}2^{(k_3+k_4)/2}\norm{f_1}_{L_{\tau}^2\ell_n^2}\norm{f_2}_{L_{\tau}^2\ell_n^2}\norm{f_3}_{L_{\tau}^2\ell_n^2}.
	\end{aligned}\]
	Note that \eqref{eq:modulation} and \eqref{eq:support property} yield
\begin{equation}\label{eq:high modulation}
2^{j_{max}} \gtrsim |G| \gtrsim |(n_1+n_2)(n_1+n_3)(n_2+n_3)|(n_1^2+n_2^2+n_3^2+n^2).
\end{equation}
Then, we have
\begin{align}\label{eq:case1}	
	(\max(2^{j_4},2^{2k_4}))^{-1}2^{(j_{min}+j_{thd})/2} \lesssim 
	\begin{cases}
		2^{(j_1+j_2+j_3)/2}2^{-7k_3/2}2^{-k_4}, \quad &0 \le j_4 < 2k_4,\\
		2^{(j_1+j_2+j_3)/2}2^{-7k_3/2}2^{-k_4}, \quad &2k_4 \le j_4 < 2k_3,\\
		2^{-j_4}2^{(j_1+j_2+j_3)/2}2^{-5k_3/2}, \quad &2k_3 \le j_4, \;\; j_4 \neq j_{max},\\
		2^{-j_4}2^{(j_1+j_2+j_3)/2}2^{-k_3}, \quad &2k_3 \le j_4, \;\; j_4 = j_{max}.
	\end{cases}
\end{align}
Among \eqref{eq:case1}, the most delicate bound appears when $j_4 =j_{\max} \ge 5 k_3$, Otherwise, we can handle the effect which weight gives by $2^{-j_{sub}/2}$. If $j_4 = j_{\max}$, we have
	\begin{align*}
		&2^{4k_3}2^{-k_4}\sum_{j_4 \ge  5k_3}\frac{2^{j_4/2}}{\max(2^{j_4},2^{2k_4})}\beta_{j_4,k_4}\sum_{j_1,j_2,j_3 \ge 2k_3}2^{(j_{min}+j_{thd})/2}2^{(k_3+k_4)/2}\prod_{i=1}^{3}\norm{f_i}_{L_{\tau}^2\ell_n^2}\\
		&\lesssim 2^{4k_3}2^{-k_4} 2^{-\frac54(k_3+k_4)}\sum_{j_1,j_2,j_3 \ge 2k_3}2^{(j_1+j_2+j_3)/2}2^{-k_3}2^{(k_3+k_4)/2}\prod_{i=1}^{3}\norm{f_{i}}_{L_{\tau}^2\ell_n^2}\\
		&\lesssim 2^{\frac94 k_3}2^{- \frac74 k_4}\norm{P_{k_1}u}_{F_{k_1}}\norm{P_{k_2}v}_{F_{k_2}}\norm{P_{k_3}w}_{F_{k_3}}.
	\end{align*}
	This finishes the proof of  Lemma \ref{lem:nonres3}.
\end{proof}

\begin{lem}[High-high-low $\Rightarrow$ low]\label{lem:nonres5}
	Let $k_3 \ge 20$, $|k_2-k_3| \le 5$ and $k_1,k_4 \le k_3 -10$. Then, we have
\begin{equation}\label{eq:nonres5-0}
	\norm{P_{k_4}N_2(P_{k_1}u,P_{k_2}v,P_{k_3}w)}_{N_{k_4}} \lesssim 2^{2k_3}C(k_1,k_4)\norm{P_{k_1}u}_{F_{k_1}}\norm{P_{k_2}v}_{F_{k_2}}\norm{P_{k_3}w}_{F_{k_3}},
	\end{equation}
	where
	\[C(k_1,k_4)= 
	\begin{cases}
		2^{-2 k_4} \hspace{1em}  &   {\rm if }\;\;  k_1 \le k_4 -10, \\
		2^{-\frac74 k_4}2^{-\frac14 k_1} \hspace{1em}  & {\rm if }\;\; k_4 \le k_1 - 10,\\
		2^{-\frac74 k_4} \hspace{1em}  &{\rm if }\;\; |k_1 -k_4| <10.
	\end{cases}
	\]
\end{lem}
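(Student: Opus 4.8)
The plan is to reduce \eqref{eq:nonres5-0} to a block estimate for $J$ via the decomposition set up in Remark \ref{rem:NonRed}. As there, I would write
\[
\mbox{LHS of }\eqref{eq:nonres5-0} \lesssim 2^{4k_3}2^{-k_4}\sum_{j_4\ge 0}\frac{2^{j_4/2}}{\max(2^{j_4},2^{2k_4})}\beta_{j_4,k_4}\sum_{j_1,j_2,j_3\ge 2k_3}|J(f_1,f_2,f_3,f_4)|,
\]
where the factor $2^{4k_3}$ comes from the three derivatives landing on the high-frequency mode $n_3$ (so $n_3^3\sim 2^{3k_3}$) together with the outer $n\sim 2^{k_4}$ and one extra $2^{k_3}$ bookkeeping factor from the time-localization on the $2^{-2k_3}$ scale, exactly as in the proof of Lemma \ref{lem:nonres3}. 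The resonance identity \eqref{eq:modulation}, combined with the frequency configuration $|k_2-k_3|\le 5$, $k_1,k_4\le k_3-10$, forces $|G|\gtrsim 2^{k_1}2^{4k_3}$ (the factor $(n_1+n_2)(n_2+n_3)(n_3+n_1)$ is $\sim 2^{k_1}2^{2k_3}$ when $k_1$ is genuinely small, and $\sim 2^{3k_3}$ when $k_1$ is comparable to $k_3$), hence by \eqref{eq:support property} we have $2^{j_{max}}\gtrsim 2^{k_1}2^{4k_3}$ (or $\gtrsim 2^{5k_3}$ in the comparable case).

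Next I would split into the three regimes of $C(k_1,k_4)$ exactly mirroring the case distinction in Lemma \ref{lem:nonres3}. In each regime, which of the $j_i$ realizes $j_{max}$ has to be examined; the dominant contribution is when $j_4=j_{max}$ and $j_{sub}$ is much smaller, since then the weight $\beta_{j_4,k_4}\sim 2^{\frac14(j_4-5k_4)}$ is largest and no $2^{-j_{sub}/2}$ gain is available to absorb it, precisely the point flagged after \eqref{eq:case1}. For the bound on $|J|$ in the regime $k_1\le k_4-10$ I would use the estimate in Lemma \ref{lem:tri-L2}(b): here $k_{thd}=k_1=k_{min}$, so \eqref{eq:tri-block estimate-b1.2} and its "otherwise" counterpart coincide up to $2^{k_1/2}$, giving $|J|\lesssim 2^{(j_1+j_2+j_3+j_4)/2}2^{-(j_{sub}+j_{max})/2}2^{k_1/2}\prod\|f_i\|_{L^2}$. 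Summing the $j$'s against $2^{j_i/2}\beta_{j_i,k_i}$ (using Lemma \ref{lem:prop of Xk} to handle the $j_4$-sum with the weight, which is legitimate since $j_4\ge 5k_4\ge 2k_4$ but we keep $l=2k_3\le 5k_3$; one has to be a little careful that the relevant $\beta$ is $\beta_{j_4,k_4}$ with $k_4$ not $k_3$, so the constraint $l\le 5k_4$ in Lemma \ref{lem:prop of Xk} is not directly met — instead I would absorb $\beta_{j_4,k_4}\sim 2^{(j_4-5k_4)/4}$ by hand against the extra modulation $2^{j_4}\gtrsim 2^{k_1}2^{4k_3}$), the counting should produce $2^{4k_3}2^{-k_4}\cdot 2^{-k_3}\cdot (\mbox{weight loss})\cdot 2^{k_1/2}$ and one checks this is $\lesssim 2^{2k_3}2^{-2k_4}$ once $k_1$ is summed out. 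The regimes $k_4\le k_1-10$ and $|k_1-k_4|<10$ are handled the same way but now $k_{min}=k_4$ (or the two are comparable), so the block estimate contributes $2^{k_4/2}$ (resp.\ $2^{k_1/2}\sim 2^{k_4/2}$), yielding the advertised $2^{-\frac74 k_4}2^{-\frac14 k_1}$ and $2^{-\frac74 k_4}$.

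The main obstacle, as in Lemma \ref{lem:nonres3}, is the bookkeeping of the weight $\beta_{j_4,k_4}$ when $j_4=j_{max}$: this is exactly the term for which the space was designed to be borderline, and one has to verify that the gain $2^{-(j_{sub}+j_{max})/2}$ from Lemma \ref{lem:tri-L2}(b), together with the lower bound $j_{max}\gtrsim k_1+4k_3$, beats both the growth of $\beta_{j_4,k_4}$ and the accumulated derivative powers $2^{4k_3}2^{-k_4}$. The arithmetic is: after using half of $j_{max}\gtrsim k_1+4k_3$ to kill $\beta_{j_4,k_4}\sim 2^{(j_4-5k_4)/4}$ (valid because $k_4\le k_3$, so $2^{(j_4-5k_4)/4}\le 2^{(j_4-2k_3+\cdots)/4}$, and $2^{-j_4/4}$ is more than enough), one is left to sum $2^{j_i/2}$ factors against $\prod 2^{j_i/2}\beta_{j_i,k_i}$, which converges geometrically, leaving the claimed polynomial bound. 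I expect the proof to be essentially a transcription of Lemma \ref{lem:nonres3}'s argument with the three-way $C(k_1,k_4)$ split layered on top, so I would present it compactly, doing the $k_1\le k_4-10$ case in full and indicating that the other two are identical with $k_{min}$ replaced by $k_4$.
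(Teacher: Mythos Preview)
Your overall strategy is exactly the paper's: reduce via Remark \ref{rem:NonRed} to the expression
\[
2^{4k_3}2^{-k_4}\sum_{j_4\ge 0}\frac{2^{j_4/2}\beta_{j_4,k_4}}{\max(2^{j_4},2^{2k_4})}\sum_{j_1,j_2,j_3\ge 2k_3}\|\mathbf 1_{D_{k_4,j_4}}(f_1\ast f_2\ast f_3)\|_{L^2_\tau\ell^2_n},
\]
split according to the relative sizes of $k_1,k_4$, and in each case focus on $j_4=j_{\max}$ where the weight $\beta_{j_4,k_4}$ is worst, controlling it against the modulation gain from Lemma \ref{lem:tri-L2}(b). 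That is precisely what the paper does.

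Two concrete errors in your execution, however, need fixing. First, your modulation lower bound is wrong. Since $n_2+n_3=n-n_1$, the size of this factor is governed by $\max(|n|,|n_1|)\sim 2^{\max(k_1,k_4)}$ when $k_1,k_4$ are well separated, not by $2^{k_1}$ alone. Thus $|G|\gtrsim 2^{4k_3+k_4}$ in Case~(a) ($k_1\le k_4-10$), $|G|\gtrsim 2^{4k_3+k_1}$ in Case~(b) ($k_4\le k_1-10$), and only $|G|\gtrsim 2^{4k_3}$ in Case~(c) ($|k_1-k_4|<10$, where $n-n_1$ may be as small as $1$). Your claim ``$2^{j_{\max}}\gtrsim 2^{k_1}2^{4k_3}$'' is only correct in Case~(b). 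Second, in Case~(a) you assert $k_{thd}=k_1=k_{min}$; this is false. With $k_1\le k_4-10\le k_3-20$ one has $k_{min}=k_1$ and $k_{thd}=k_4$, so when $j_4=j_{\max}$ the pair $(k_4,j_4)=(k_{thd},j_{\max})$ falls under \eqref{eq:tri-block estimate-b1.2} and the block estimate delivers $2^{k_{thd}/2}=2^{k_4/2}$, not $2^{k_1/2}$.

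With the correct inputs $j_{\max}\ge 4k_3+k_4$ and $2^{k_4/2}$ in Case~(a), the arithmetic is
\[
2^{4k_3}2^{-k_4}\cdot 2^{-5k_4/4}\sum_{j_4\ge 4k_3+k_4}2^{-j_4/4}\cdot 2^{-k_3}\cdot 2^{k_4/2}\sim 2^{2k_3}2^{-2k_4},
\]
which is exactly the claimed bound; Cases~(b) and~(c) then follow the same template with the respective modulation thresholds $4k_3+k_1$ and $4k_3$, yielding $2^{-7k_4/4-k_1/4}$ and $2^{-7k_4/4}$.
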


\begin{proof}
	As we observed in the proof of Lemma \ref{lem:nonres3}, the left-hand side of \eqref{eq:nonres5-0} is bounded by
	\begin{equation}\label{eq:nonres5-1}
		2^{4k_3}2^{-k_4}\sum_{j_4 \ge 0}\frac{2^{j_4/2}\beta_{j_4,k_4}}{\max(2^{j_4},2^{2k_4})}\sum_{j_1,j_2,j_3 \ge 2k_3}\norm{\mathbf{1}_{D_{k_4,j_4}}\cdot(f_1 \ast f_2 \ast f_3)}_{L_{\tau_4}^2\ell_{n_4}^2}.
	\end{equation}

	\textbf{Case (a): $k_1 \le k_4-10$.} In this case, we also have $j_{max} \ge 4k_3 + k_4$ by \eqref{eq:modulation} and \eqref{eq:support property}. Note that there is no effect of the weight when $j_4 \le 5k_4$. Thus we reduce the summation over $j_4$ on $j_4 \ge 5k_4$. By Lemma \ref{lem:tri-L2} (b), we have
\[\eqref{eq:nonres5-1} \lesssim 2^{4k_3}2^{-k_4}\sum_{5k_4 \le j_4} \beta_{j_4,k_4}\sum_{j_1,j_2,j_3 \ge 2k_3}2^{(j_1+j_2+j_3)/2} 2^{-(j_{max}+j_{sub})/2}2^{k_4/2}\prod_{i=1}^{3}\norm{f_{i}}_{L_{\tau}^2\ell_n^2}.\]
Here $2^{k_4/2}$ can be replaced by $2^{k_1/2}$, when $j_4 \neq j_{max}$, but there is no notable effect on our result. The most delicate case happens when $j_4 = j_{max}$ similarly to the proof of Lemma \ref{lem:nonres3}, then  we get
\begin{align*}
\eqref{eq:nonres5-1} &\lesssim 2^{4k_3}2^{-k_4}\sum_{4k_3+k_4 -5 \le j_4} \beta_{j_4,k_4}\sum_{j_1,j_2,j_3 \ge 2k_3}2^{(j_1+j_2+j_3)/2} 2^{-(j_{max}+j_{sub})/2}2^{k_4/2}\prod_{i=1}^{3}\norm{f_{i}}_{L_{\tau}^2\ell_n^2}\\
&\lesssim 2^{2k_3}2^{-2 k_4}\norm{P_{k_1}u}_{F_{k_1}}\norm{P_{k_2}v}_{F_{k_2}}\norm{P_{k_3}w}_{F_{k_3}}.
\end{align*}
		\textbf{Case (b): $k_4 \le k_1-10$.} Similarly to \textbf{Case (a)}, we only focus on the case 
		\begin{equation*}
			j_4 = j_{max} \ge 4k_3 + k_1 - 5,
		\end{equation*}
and therefore we obtain
		\begin{align*}
			\begin{aligned}
				\eqref{eq:nonres5-1} &\lesssim 2^{4k_3}2^{-k_4}\sum_{4k_3 + k_1 -5 \le j_4}\beta_{j_4,k_4}\sum_{j_1,j_2,j_3 \ge 2k_3}2^{(j_1+j_2+j_3)/2} 2^{-(j_{max}+j_{sub})/2}2^{k_4/2}\prod_{i=1}^{3}\norm{f_{i}}_{L_{\tau}^2\ell_n^2}\\
				&\lesssim 2^{2k_3}2^{-\frac74 k_4}2^{-\frac14 k_1}\norm{P_{k_1}u}_{F_{k_1}}\norm{P_{k_2}v}_{F_{k_2}}\norm{P_{k_3}w}_{F_{k_3}}.
			\end{aligned}
		\end{align*}
		
		\textbf{Case (c): $|k_1 - k_4| < 10$.} Note from \eqref{eq:high modulation} that $j_{max} \ge 4k_3$. Similarly to above cases, the case when $j_4 = j_{max}$ is the most delicate one, and we only deal with this case. Note also that the case $(k_4j_4) = (k_{thd}, j_{max})$ never happens. By Lemma \ref{lem:tri-L2} (b) with $j_{max} \ge 4k_3$, we obtain
		\begin{equation*}
			\begin{aligned}
				\eqref{eq:nonres5-1} &\lesssim 2^{4k_3}2^{-k_4}\sum_{4k_3 \le j_{max}=j_4} \beta_{j_4,k_4}\sum_{j_1,j_2,j_3 \ge 2k_3}2^{(j_1+j_2+j_3)/2} 2^{-(j_{max}+j_{sub})/2}2^{k_4/2}\prod_{i=1}^{3}\norm{f_{i}}_{L_{\tau}^2\ell_n^2}\\
&\lesssim 2^{2k_3}2^{-\frac74 k_4}\norm{P_{k_1}u}_{F_{k_1}}\norm{P_{k_2}v}_{F_{k_2}}\norm{P_{k_3}w}_{F_{k_3}},
			\end{aligned}
		\end{equation*}
		which completes the proof of Lemma \ref{lem:nonres5}.
	\end{proof}
	
	\subsection{Energy estimates}\label{sec:energy-esti}
	
	In this section, we only deal with the energy for the difference of two solutions $v_1$ and $v_2$ to the following equation: 
	\begin{equation}\label{eq:5mkdv4}
		\begin{split}
			\pt\wh{v}(n) - i(n^5 + c_1n^3 + c_2n)\wh{v}(n)=&-20in^3|\wh{v}(n)|^2\wh{v}(n)\\
			&+10in \sum_{\N_{3,n}} \wh{v}(n_1)\wh{v}(n_2)n_3^2\wh{v}(n_3) \\
			&+5in \sum_{\N_{3,n}} (n_1+n_2)\wh{v}(n_1)\wh{v}(n_2)n_3\wh{v}(n_3)\\
			&+6i n\sum_{\N_{5,n}} \wh{v}(n_1)\wh{v}(n_2)\wh{v}(n_3)\wh{v}(n_4)\wh{v}(n_5).
		\end{split}
	\end{equation}
Let $w = v_1 - v_2$. Then $w$ satisfies
	\begin{align}
		\begin{aligned}\label{eq:5mkdv8}
			&\pt\wh{w}(n) - i\mu(n)\wh{w}(n)= \sum_{j=1}^4\wh{N}_j(v_1,v_2,w), \qquad w(0,x) = w_0(x) = v_{1,0}(x) - v_{2,0}(x),
		\end{aligned}
	\end{align}
where
\begin{align*}
&\wh{N}_1(v_1,v_2,w) = -20in^3\left(|\wh{v}_1(n)|^2\wh{w}(n) + \wh{v}_1(n)\wh{v}_2(n)\wh{w}(-n) + |\wh{v}_2(n)|^2\wh{w}(n)\right),\\ 
&\begin{aligned}
		\wh{N}_2(v_1,v_2,w) =&~{} 10in \sum_{\N_{3,n}} \wh{w}(n_1)\wh{v}_1(n_2)n_3^2\wh{v}_1(n_3) + 10in \sum_{\N_{3,n}} \wh{v}_2(n_1)\wh{w}(n_2)n_3^2\wh{v}_1(n_3)\\
		&\;\;\;+ 10in \sum_{\N_{3,n}} \wh{v}_2(n_1)\wh{v}_2(n_2)n_3^2\wh{w}(n_3),
	\end{aligned}\\
&\begin{aligned}
		\wh{N}_3(v_1,v_2,w) =&~{} 5in \sum_{\N_{3,n}} (n_1+n_2)\wh{w}(n_1)\wh{v}_1(n_2)n_3\wh{v}_1(n_3) + 5in \sum_{\N_{3,n}} (n_1+n_2)\wh{v}_2(n_1)\wh{w}(n_2)n_3\wh{v}_1(n_3)\\
		&\;\;\;+ 5in \sum_{\N_{3,n}} (n_1+n_2)\wh{v}_2(n_1)\wh{v}_2(n_2)n_3\wh{w}(n_3),
	\end{aligned}
\end{align*}
and
	\[\begin{aligned}
		\wh{N}_4(v_1,v_2,w) =&~{} 6i n\sum_{\N_{5,n}} \wh{w}(n_1)\wh{v}_1(n_2)\wh{v}_1(n_3)\wh{v}_1(n_4)\wh{v}_1(n_5) + 6i n\sum_{\N_{5,n}}\wh{v}_2(n_1)\wh{w}(n_2)\wh{v}_1(n_3)\wh{v}_1(n_4)\wh{v}_1(n_5)\\
		&+ 6i n\sum_{\N_{5,n}}\wh{v}_2(n_1)\wh{v}_2(n_2)\wh{w}(n_3)\wh{v}_1(n_4)\wh{v}_1(n_5) + 6i n\sum_{\N_{5,n}}\wh{v}_2(n_1)\wh{v}_2(n_2)\wh{v}_2(n_3)\wh{w}(n_4)\wh{v}_1(n_5)\\
		&+ 6i n\sum_{\N_{5,n}}\wh{v}_2(n_1)\wh{v}_2(n_2)\wh{v}_2(n_3)\wh{v}_2(n_4)\wh{w}(n_5).
	\end{aligned}\]
	We denote $\wh{N}_1(v_1,v_2,w)+\wh{N}_2(v_1,v_2,w)+\wh{N}_3(v_1,v_2,w)+\wh{N}_4(v_1,v_2,w)$ by $\wh{N}(v_1,v_2,w)$ only in the proof of Proposition \ref{prop:energy1-3} below. As we mentioned in Section \ref{sec:idea}, we define the localized modified energy by
\begin{equation}\label{eq:new energy1-5}
			\begin{aligned}
				{E}_{k}(w)(t) &=\norm{P_kw(t)}_{L^2}^2 \\
				&+\sum_{1 \le \ell \le m \le 2} \mbox{Re}\left[{\kappa}_{\ell,m} \sum_{n,\overline{\N}_{3,n}}\wh{v}_\ell(n_1)\wh{v}_m(n_2)\psi_k(n_3)\frac{1}{n_3}\wh{w}(n_3)\chi_k(n)\frac1n\wh{w}(n)\right]\\
				&+\sum_{1 \le \ell \le m \le 2} \mbox{Re}\left[{\epsilon}_{\ell,m} \sum_{n,\overline{\N}_{3,n}}\wh{v}_\ell(n_1)\wh{v}_m(n_2)\chi_k(n_3)\frac{1}{n_3}\wh{w}(n_3)\chi_k(n)\frac1n\wh{w}(n)\right]
			\end{aligned}
		\end{equation}
and
	\[{E}_{T}^s(w) = \norm{P_0w(0)}_{L^2}^2 + \sum_{k \ge 1}2^{2sk} \sup_{t_k \in [-T,T]} {E}_{k}(w)(t_k),\]
where ${\kappa}_{\ell,m}$ and ${\epsilon}_{\ell,m}$, $1 \le \ell \le m \le 2$, are real and will be chosen later.

	\begin{rem}\label{rem:modified energy}
The coefficients $\kappa_{\ell,m}$ and $\epsilon_{\ell,m}$ will be chosen appropriately for high-low interaction components to formulate commutator estimates as in Lemma \ref{lem:commutator1}. 

On the other hands, symmetrizing $\wh{N}_2(v_1,v_2,w)$ and $\wh{N}_3(v_1,v_2,w)$, one can rewrite 
	\begin{equation}\label{eq:energy-nonlinear3-1}
		\begin{aligned}
			&\wh{N}_2(v_1,v_2,w)\\
=&\frac{10}{3}in \sum_{\N_{3,n}} \Big(\wh{v}_1(n_1)\wh{v}_1(n_2)+\wh{v}_1(n_1)\wh{v}_2(n_2)+\wh{v}_2(n_1)\wh{v}_2(n_2)\Big)n_3^2\wh{w}(n_3)\\ 
			&+ \frac{20}{3}in \sum_{\N_{3,n}} n_2^2\Big(\wh{v}_1(n_1)\wh{v}_1(n_2)+\wh{v}_1(n_1)\wh{v}_2(n_2)+\wh{v}_2(n_1)\wh{v}_1(n_2)+\wh{v}_2(n_1)\wh{v}_2(n_2)\Big)\wh{w}(n_3)
		\end{aligned}
	\end{equation}	
	and
	\begin{equation*}
		\begin{aligned}
		\wh{N}_3(v_1,v_2,w)=&~{}\sum_{1 \le \ell \le m \le 2}\frac{10}{3}in \sum_{\N_{3,n}} (n_1+n_2)\wh{v}_\ell(n_1)\wh{v}_m(n_2)n_3\wh{w}(n_3)\\
			&~{}+\sum_{1 \le \ell \le m \le 2} 5in\sum_{\N_{3,n}} n_1\wh{v}_\ell(n_1)n_2\wh{v}_m(n_2)\wh{w}(n_3).
		\end{aligned}
	\end{equation*}
Then, the quintic resonant interaction components with respect to the first term in \eqref{eq:energy-nonlinear3-1} can be removed due to the choice of same values of $\kappa_{\ell, m}$ and $\epsilon_{\ell,m}$, respectively, for all $1 \le \ell \le m \le 2$, see Remark \ref{rem:resonant2} below.
	\end{rem}

Except for the cases mentioned in Remark \ref{rem:modified energy}, we only deal with
\begin{equation*}
		\wh{N}_1(v_2,w) = -20in|\wh{v}_2(n)|^2\wh{w}(n),
	\end{equation*}
	\begin{align}
		\begin{aligned}\label{eq:energy-nonlinear3-2}
			\wh{N}_2(v_2,w) &= \frac{10}{3}in \sum_{\N_{3,n}} \wh{v}_2(n_1)\wh{v}_2(n_2)n_3^2\wh{w}(n_3)+\frac{20}{3}in \sum_{\N_{3,n}} n_2^2\wh{v}_2(n_1)\wh{v}_2(n_2)\wh{w}(n_3),
		\end{aligned}
	\end{align}
	and
	\begin{align}
		\begin{aligned}\label{eq:energy-nonlinear4-2}
			\wh{N}_3(v_2,w) &= \frac{10}{3}in \sum_{\N_{3,n}} (n_1+n_2)\wh{v}_2(n_1)\wh{v}_2(n_2)n_3\wh{w}(n_3)+ 5in\sum_{\N_{3,n}} n_1\wh{v}_2(n_1)n_2\wh{v}_2(n_2)\wh{w}(n_3)
		\end{aligned}
	\end{align}
	as the cubic nonlinear terms for the sake of computational convenience, and we simply denote the relevant modified energy by
	\begin{equation*}
		\begin{aligned}
			{E}_{k}(w)(t) =&~{} \norm{P_kw(t)}_{L^2}^2 \\
			&+ \mbox{Re}\left[{\kappa} \sum_{n,\overline{\N}_{3,n}}\wh{v}_2(n_1)\wh{v}_2(n_2)\psi_k(n_3)\frac{1}{n_3}\wh{w}(n_3)\chi_k(n)\frac1n\wh{w}(n)\right]\\
			&+ \mbox{Re}\left[{\epsilon} \sum_{n,\overline{\N}_{3,n}}\wh{v}_2(n_1)\wh{v}_2(n_2)\chi_k(n_3)\frac{1}{n_3}\wh{w}(n_3)\chi_k(n)\frac1n\wh{w}(n)\right]
		\end{aligned}
	\end{equation*}
for appropriate real constants $\kappa$ and $\epsilon$.

	\begin{prop}\label{prop:energy1-3}
		Let $s \ge 2$ and $T \in (0,1]$. Then, for smooth solutions $v_1, v_2 \in F^{2s}(T) \cap C([-T,T];H^{\infty}(\T))$ to \eqref{eq:5mkdv4} and $w \in F^s(T) \cap C([-T,T];H^{\infty}(\T))$ to \eqref{eq:5mkdv8}, we have
		\begin{equation*}
			\begin{aligned}
				{E}_{T}^0(w) \lesssim&~{} (1+ \norm{v_{1,0}}_{H^s(\T)}^2+\norm{v_{1,0}}_{H^s(\T)}\norm{v_{2,0}}_{H^s(\T)}+\norm{v_{2,0}}_{H^s(\T)}^2)\norm{w_0}_{L^2(\T)}^2\\
				&+\left(\norm{v_1}_{F^{s}(T)}^2+\norm{v_1}_{F^{s}(T)}\norm{v_2}_{F^{s}(T)}+\norm{v_2}_{F^{s}(T)}^2\right)\norm{w}_{F^0(T)}^2\\
				&+\Big(\sum_{m=0}^{4}\norm{v_1}_{F^{s}(T)}^{4-m}\norm{v_2}_{F^{s}(T)}^m\Big)\norm{w}_{F^0(T)}^2\\
				&+\Big(\sum_{m=0}^{6}\norm{v_1}_{F^{s}(T)}^{6-m}\norm{v_2}_{F^{s}(T)}^m\Big)\norm{w}_{F^0(T)}^2
			\end{aligned} 
		\end{equation*}
		and
		\begin{equation*}
			\begin{aligned}
				{E}_{T}^s(w)&\les (1+ \norm{v_{1,0}}_{H^s(\T)}^2+\norm{v_{1,0}}_{H^s(\T)}\norm{v_{2,0}}_{H^s(\T)}+\norm{v_{2,0}}_{H^s(\T)}^2)\norm{w_0}_{H^s(\T)}^2\\
				&\qquad+\left(\norm{v_1}_{F^s(T)}^2+\norm{v_1}_{F^s(T)}\norm{v_2}_{F^s(T)}+\norm{v_2}_{F^s(T)}^2\right)\norm{w}_{F^s(T)}^2\\
				&\qquad +\left(\sum_{\ell,m=1,2}\norm{v_\ell}_{F^{s}(T)}\norm{v_m}_{F^{2s}(T)}\right)\norm{w}_{F^0(T)}\norm{w}_{F^s(T)}\\
				&\qquad +\Big(\sum_{m=0}^{4}\norm{v_1}_{F^{s}(T)}^{4-m}\norm{v_2}_{F^{s}(T)}^m\Big)\norm{w}_{F^s(T)}^2\\
				&\qquad +\Big(\sum_{m=0}^{3}\norm{v_1}_{F^{s}(T)}^{3-m}\norm{v_2}_{F^{s}(T)}^m\Big)(\norm{v_1}_{F^{2s}(T)}+\norm{v_2}_{F^{2s}(T)})\norm{w}_{F^0(T)}\norm{w}_{F^s(T)}\\
				&\qquad +\Big(\sum_{m=0}^{6}\norm{v_1}_{F^{s}(T)}^{6-m}\norm{v_2}_{F^{s}(T)}^m\Big)\norm{w}_{F^s(T)}^2.
			\end{aligned}
		\end{equation*} 
	\end{prop}

	\begin{proof}
		For any $k \ge 1$ and $t \in [-T,T]$, we differentiate ${E}_{k}(w)$ with respect to $t$ and deduce that 
		\[\frac{d}{dt}{E}_{k}(w) = I_1(t) + I_2(t) + I_3(t),\]
		where 
		\[\begin{aligned}
			I_1(t) &:= \frac{d}{dt}\norm{P_kw}_{L^2}^2\\
			&= 20 i \sum_{n} \chi_k^2(n) n^3 \wh{v}_1(-n)\wh{v}_2(-n)\wh{w}(n)\wh{w}(n)\\
			&\qquad+ 2\mbox{Re}\left[\sum_{n}\chi_k(n)\left(\overline{\wh{N}}_{2}(v_1,v_2,w)+\overline{\wh{N}}_{3}(v_1,v_2,w)+\overline{\wh{N}}_{4}(v_1,v_2,w)\right)\chi_k(n)\wt{w}(n)\right],
		\end{aligned}\]
		\[
		I_2(t)=  I_{2,1}(t) + I_{2,2}(t) + I_{2,3}(t),
		\]
		where
		\begin{align*}
			I_{2,1}(t) &=\mbox{Re}\Big[{\kappa}i \sum_{n,\overline{\N}_{3,n}}\big\{10n_1n_2^3(n_3+n) + 5n_1^2n_2^2(n_3+n) + 30n_1n_2^2n_3n \\
			&\;\;\;\;+ 10 n_2^3n_3n - 5(n_1+n_2)n_3^2n^2\big\} \wh{v}_2(n_1)\wh{v}_2(n_2)\psi_k(n_3)\frac{1}{n_3}\wh{w}(n_3)\chi_k(n)\frac1n\wh{w}(n)\Big],\\
			I_{2,2}(t) &=\mbox{Re}\Big[c_1{\kappa}i \sum_{n,\overline{\N}_{3,n}}\big\{3n_1n_2(n_3+n) + 6n_2n_3n\big\} \\
			&\hspace{9em}\times \wh{v}_2(n_1)\wh{v}_2(n_2)\psi_k(n_3)\frac{1}{n_3}\wh{w}(n_3)\chi_k(n)\frac1n\wh{w}(n)\Big],
		\end{align*}
		and
		\[\begin{aligned}
			I_{2,3}(t) &=\mbox{Re}\Big[{\kappa} \sum_{n,\overline{\N}_{3,n}}\wh{N}(v_2)(n_1)\wh{v}_2(n_2)\psi_k(n_3)\frac{1}{n_3}\wh{w}(n_3)\chi_k(n)\frac1n\wh{w}(n)\\
			&\hspace{5em}+\wh{v}_2(n_1)\wh{N}(v_2)(n_2)\psi_k(n_3)\frac{1}{n_3}\wh{w}(n_3)\chi_k(n)\frac1n\wh{w}(n)\\
			&\hspace{5em}+\wh{v}_2(n_1)\wh{v}_2(n_2)\psi_k(n_3)\frac{1}{n_3}\wh{N}(v_1,v_2,w)(n_3)\chi_k(n)\frac1n\wh{w}(n)\\
			&\hspace{5em}+\wh{v}_2(n_1)\wh{v}_2(n_2)\psi_k(n_3)\frac{1}{n_3}\wh{w}(n_3)\chi_k(n)\frac1n\wh{N}(v_1,v_2,w)(n)\Big],
		\end{aligned}\]
		and
		\[
		I_3(t) = I_{3,1}(t) + I_{3,2}(t)+I_{3,3}(t) ,\] 
	where
		\begin{align*}
			I_{3,1}(t) &= \mbox{Re}\Big[{\epsilon}i \sum_{n,\overline{\N}_{3,n}}\big\{10n_1n_2^3(n_3+n) + 5n_1^2n_2^2(n_3+n) + 30n_1n_2^2n_3n \\
			&\hspace{2cm}+ 10 n_2^3n_3n - 5(n_1+n_2)n_3^2n^2\big\} \wh{v}_2(n_1)\wh{v}_2(n_2)\chi_k(n_3)\frac{1}{n_3}\wh{w}(n_3)\chi_k(n)\frac1n\wh{w}(n)\Big],\\
			{I}_{3,2}(t) &=\mbox{Re}\Big[c_1{\epsilon}i \sum_{n,\overline{\N}_{3,n}}\big\{3n_1n_2(n_3+n) + 6n_2n_3n\big\}\\
			&\hspace{9em}\times \wh{v}_2(n_1)\wh{v}_2(n_2)\chi_k(n_3)\frac{1}{n_3}\wh{w}(n_3)\chi_k(n)\frac1n\wh{w}(n)\Big],
		\end{align*}
		and
		\begin{align*}
			I_{3,3}(t) &=\mbox{Re}\Big[{\epsilon} \sum_{n,\overline{\N}_{3,n}}\wh{N}(v_2)(n_1)\wh{v}_2(n_2)\chi_k(n_3)\frac{1}{n_3}\wh{w}(n_3)\chi_k(n)\frac1n\wh{w}(n)\\
			&\hspace{4cm}+\wh{v}_2(n_1)\wh{N}(v_2)(n_2)\chi_k(n_3)\frac{1}{n_3}\wh{w}(n_3)\chi_k(n)\frac1n\wh{w}(n)\\
			&\hspace{4cm}+\wh{v}_2(n_1)\wh{v}_2(n_2)\chi_k(n_3)\frac{1}{n_3}\wh{N}(v_1,v_2,w)(n_3)\chi_k(n)\frac1n\wh{w}(n)\\
			&\hspace{4cm}+\wh{v}_2(n_1)\wh{v}_2(n_2)\chi_k(n_3)\frac{1}{n_3}\wh{w}(n_3)\chi_k(n)\frac1n\wh{N}(v_1,v_2,w)(n)\Big].
		\end{align*}
		Then, it suffices to get a bound of
		\begin{equation}\label{eq:energy1-3.3}
			\left|\int_0^{t_k} {I}_{1}(t) + {I}_{2}(t) + {I}_{3}(t) \; dt \right|.
		\end{equation}
		
\noindent		\textbf{Estimates for main cubic terms in $I_1(t)$, $I_2(t)$, and $I_3(t)$.} We first consider the terms 
		\[-\mbox{Re}\left[\frac{20}{3}i \sum_{n,\overline{\N}_{3,n}}\chi_k(n)n\wh{v}_2(n_1)\wh{v}_2(n_2)n_3^2\wh{w}(n_3)\chi_k(n)\wh{w}(n)\right]\]
		and
		\[-\mbox{Re}\left[\frac{20}{3}i \sum_{n,\overline{\N}_{3,n}}\chi_k(n)n(n_1+n_2)\wh{v}_2(n_1)\wh{v}_2(n_2)n_3\wh{w}(n_3)\chi_k(n)\wh{w}(n)\right]\]
		in ${E}_{1}$,
		\[\mbox{Re}\left[- 5{\kappa}i\sum_{n,\overline{\N}_{3,n}}(n_1+n_2)n_3^2n^2\wh{v}_2(n_1)\wh{v}_2(n_2)\psi_k(n_3)\frac{1}{n_3}\wh{w}(n_3)\chi_k(n)\frac1n\wh{w}(n)\right]\]
		and
		\[\mbox{Re}\left[- 5{\epsilon}i\sum_{n,\overline{\N}_{3,n}}(n_1+n_2)n_3^2n^2\wh{v}_2(n_1)\wh{v}_2(n_2)\chi_k(n_3)\frac{1}{n_3}\wh{w}(n_3)\chi_k(n)\frac1n\wh{w}(n)\right]\]
		in $I_2(t)$ and $I_3(t)$, respectively. By choosing ${\kappa} = -\frac{4}{3}$ and ${\epsilon} = -\frac{2}{3}$ (choosing $\kappa_{\ell, m} = -\frac{4}{3}$ and $\epsilon_{\ell, m} = -\frac{2}{3}$, for $1\le \ell \le m \le 2$, in \eqref{eq:new energy1-5}), we reduce above terms to the following:
		\begin{align*}
			A_1(k)&:=\sum_{k_1,k_2 \le k - 10}\Bigg|\sum_{n,\overline{\N}_{3,n}}\int_0^{t_k} \chi_k(n)n[\chi_{k_1}(n_1)\wh{v}_2(n_1)\chi_{k_2}(n_2)\wh{v}_2(n_2)n_3^2\wh{w}(n_3)]\chi_k(n)\wh{w}(n) \,dt\\
			&+ \frac12\sum_{n,\overline{\N}_{3,n}}\int_0^{t_k} (n_1+n_2)\chi_{k_1}(n_1)\wh{v}_2(n_1)\chi_{k_2}(n_2)\wh{v}_2(n_2)\chi_k(n_3)n_3\wh{w}(n_3)\chi_k(n)n\wh{w}(n) \,dt\\
			&- \sum_{n,\overline{\N}_{3,n}}\int_0^{t_k} (n_1+n_2)\chi_{k_1}(n_1)\wh{v}_2(n_1)\chi_{k_2}(n_2)\wh{v}_2(n_2)\psi_k(n_3)n_3\wh{w}(n_3)\chi_k(n)n\wh{w}(n) \,dt \Bigg|,\\
			A_2(k)&:=\sum_{0\le k_1,k_2 \le k - 10}\Bigg|\sum_{n,\overline{\N}_{3,n}}\int_0^{t_k} \chi_k(n)\Big[(n_1+n_2)\chi_{k_1}(n_1)\wh{v}_2(n_1)\chi_{k_2}(n_2)\wh{v}_2(n_2)n_3\wh{w}(n_3)\Big]\\
			&\hspace{27em}\times\chi_k(n)n\wh{w}(n)\; dt\\
			&- \sum_{n,\overline{\N}_{3,n}}\int_0^{t_k} (n_1+n_2)\chi_{k_1}(n_1)\wh{v}_2(n_1)\chi_{k_2}(n_2)\wh{v}_2(n_2)\chi_k(n_3)n_3\wh{w}(n_3)\chi_k(n)n\wh{w}(n)\;dt \Bigg|,\\
			A_3(k)&:=\sum_{\substack{\max(k_1,k_2) \ge k-9\\k_3 \ge 0}}\Bigg|\sum_{n,\overline{\N}_{3,n}}\int_0^{t_k}\chi_{k_1}(n_1)\wh{v}_2(n_1)\chi_{k_2}(n_2)\wh{v}_2(n_2)\\
			&\hspace{6cm}\times\chi_{k_3}(n_3)n_3^2\wh{w}(n_3)\chi_k^2(n)n\wh{w}(n)\;dt\Bigg|,\\
			A_4(k)&:=\sum_{\substack{\max(k_1,k_2) \ge k-9\\k_3 \ge 0}}\Bigg|\sum_{n,\overline{\N}_{3,n}}\int_0^{t_k}(n_1+n_2)\chi_{k_1}(n_1)\wh{v}_2(n_1)\chi_{k_2}(n_2)\wh{v}_2(n_2)\\
			&\hspace{6cm}\times\chi_{k_3}(n_3)n_3\wh{w}(n_3)\chi_k^2(n)n\wh{w}(n)\;dt\Bigg|,
		\end{align*}
		and
		\begin{align*}
			A_5(k)&:=\sum_{\max(k_1,k_2) \ge k-9}\Bigg|\sum_{n,\overline{\N}_{3,n}}\int_0^{t_k}(n_1+n_2)n_3^2n^2\chi_{k_1}(n_1)\wh{v}_2(n_1)\\
			&\hspace{2cm} \times \chi_{k_2}(n_2)\wh{v}_2(n_2)(\psi_k(n_3)+\chi_k(n_3))\frac{1}{n_3}\wh{w}(n_3)\chi_k(n)\frac1n\wh{w}(n)\;dt\Bigg|.
		\end{align*}
We consider the $A_1(k)+ A_2(k)$. By Lemmas \ref{lem:commutator1}, we see that
\[\begin{aligned}
&~{}A_1(k) + A_2(k)\\
\lesssim&~{} \sum_{\substack{k_1,k_2 \le k-10\\ |k_1 - k_2| \le 5}} 2^{2k_2} \norm{P_{k_1}v_2}_{F_{k_1}(T)}\norm{P_{k_2}v_2}_{F_{k_2}(T)}\sum_{|k-k'|\le 5} \norm{P_{k'}w}_{F_{k'}(T)}^2\\
&~{}+\sum_{\substack{k_1,k_2 \le k-10\\ k_1 \le k_2 - 10}} 2^{-(k-k_2)}2^{2k_2} \norm{P_{k_1}v_2}_{F_{k_1}(T)}\norm{P_{k_2}v_2}_{F_{k_2}(T)}\sum_{|k-k'|\le 5} \norm{P_{k'}w}_{F_{k'}(T)}^2\\
\lesssim&~{} \left(\norm{v_2}_{F^0(T)}\norm{v_2}_{F^{2}(T)} + \norm{v_2}_{F^{0+}(T)}\left(\sum_{k_2 \le k - 10}2^{-(k-k_2)}2^{2k_2} \norm{P_{k_2}v_2}_{F_{k_2}(T)}\right)\right)\sum_{|k-k'|\le 5} \norm{P_{k'}w}_{F_{k'}(T)}^2\\
\lesssim&~{}\norm{v_2}_{F^{2}(T)}^2\sum_{|k-k'|\le 5} \norm{P_{k'}w}_{F_{k'}(T)}^2,
		\end{aligned}\]
which yields that for any $s \ge 0$ and any $t_k \in (0, T]$
		\begin{align*}
			\begin{aligned}
				\sum_{k \ge 1} 2^{2sk}\Big( A_1(k) + A_2(k) \Big) \lesssim \norm{v_2}_{F^{2}(T)}^2\norm{w}_{F^s(T)}^2.
			\end{aligned}
		\end{align*}

For $A_3(k)$, the frequency conditions $\max(k_1,k_2) \ge k-9$ and $k_3 \ge 0$ can be separated by the following five cases assuming without loss of generality $k_1 \le k_2$:
\begin{equation*}
\begin{aligned}
&\mbox{\bf Case I.} \;\; k_1,k_3 \le k- 10, \;\; |k_2 - k| \le 5,\;\; \mbox{and} \;\; |k_1 - k_3| \le 5\\
&\mbox{\bf Case II.} \;\; k_3 \le k- 10, \;\; k_1 \le k_3 -10, \;\; \mbox{and} \;\; |k_2 - k| \le 5\\
&\mbox{\bf Case III.} \;\; k_1 \le k- 10 \;\; \mbox{and} \;\; k_2, k_3 \ge k - 9\\
&\mbox{\bf Case IV.} \;\; k_3 \le k- 10 \;\; \mbox{and} \;\; k_1, k_2 \ge k - 9\\
&\mbox{\bf Case V.} \;\; k_1,k_2,k_3 \ge k - 9
\end{aligned}
\end{equation*}

By Lemma \ref{lem:energy1-1}, $A_3(k)$ restricted on \textbf{Case I} and \textbf{Case II} is bounded by
\begin{equation}\label{eq:A_3(k)_1,2}
\begin{aligned}
&~{}\norm{P_{k}v_2}_{F_{k}(T)}\norm{P_{k}w}_{F_k(T)}\sum_{\substack{|k_1 - k_3| \le 5 \\ k_1 \le k -10}}2^{2k_3}\norm{P_{k_1}v_2}_{F_{k_1}(T)}\norm{P_{k_3}w}_{F_{k_3}(T)}\\
&~{}+\norm{P_{k}v_2}_{F_{k}(T)}\norm{P_{k}w}_{F_k(T)}\sum_{\substack{k_1 \le k_3-10 \\ k_3 \le k-10}} 2^{-(k-k_3)}2^{2k_3} \norm{P_{k_1}v_2}_{F_{k_1}(T)}\norm{P_{k_3}w}_{F_{k_3}(T)}\\
\lesssim&~{} \norm{P_{k}v_2}_{F_{k}(T)}\norm{P_{k}w}_{F_k(T)}\norm{v_2}_{F^{2}(T)}\left(\norm{w}_{F^{0}(T)} +\sum_{k_3 \le k-10} 2^{-(k-k_3)}2^{2k_3}\norm{P_{k_3}w}_{F_{k_3}(T)}\right).
\end{aligned}
\end{equation}

Note that $A_3(k)$ restricted on \textbf{Case IV} is dominated by $A_3(k)$ restricted on \textbf{Case III}, due to the presence of two derivatives. Again, by Lemma \ref{lem:energy1-1}, $A_3(k)$ restricted on \textbf{Case III} is bounded by
\begin{equation*}
\begin{aligned}
&~{}\norm{P_{k}w}_{F_{k}(T)}\sum_{k_1 \le k-10} 2^{k_1/2}\norm{P_{k_1}v_2}_{F_{k_1}(T)}\sum_{\substack{|k_2-k|\le 5 \\ |k_3-k|\le 5}}2^{2k_3}\norm{P_{k_2}v_2}_{F_{k_2}(T)}\norm{P_{k_3}w}_{F_{k_3}(T)} \\
&~{}+\norm{P_{k}w}_{F_{k}(T)}\sum_{k_1 \le k-10} \norm{P_{k_1}v_2}_{F_{k_1}(T)}\sum_{\substack{|k_2-k_3|\le 5 \\ k_3 \ge k+9}}2^{-(k_3-k)}2^{2k_3}\norm{P_{k_2}v_2}_{F_{k_2}(T)}\norm{P_{k_3}w}_{F_{k_3}(T)} \\
\lesssim&~{} \norm{v_2}_{F^{\frac12+}(T)}\norm{v_2}_{F^{2}(T)} \sum_{|k'-k| \le 5}\norm{P_{k'}w}_{F_{k'}(T)}^2\\
&~{} + \norm{P_{k}w}_{F_{k}(T)} \norm{v_2}_{F^{0+}(T)}\sum_{\substack{|k_2-k_3|\le 5 \\ k_3 \ge k+9}}2^{-(k_3-k)}2^{2k_3}\norm{P_{k_2}v_2}_{F_{k_2}(T)}\norm{P_{k_3}w}_{F_{k_3}(T)}.
\end{aligned}
\end{equation*}

Finally, by Lemma \ref{lem:energy1-1}, $A_3(k)$ restricted on \textbf{Case V} is bounded by\footnote{$A_3(k)$ under the case when $|k-k_3| \le 5$, $|k_1-k_2| \le 5$ and $k_3 \le k_1 - 9$ is dominated by the one under the frequency conditions $|k-k_1| \le 5$, $|k_2-k_3| \le 5$ and $k_1 \le k_2 - 9$, due to the presence of two derivatives taken in the high frequency mode.} 
\begin{equation}\label{eq:A_3(k)_5}
\begin{aligned}
&~{}\norm{P_{k}w}_{F_{k}(T)}\Bigg(\sum_{|k-k'| \le 5}2^{7k/2}\norm{P_{k'}v_2}_{F_{k'}(T)}^2\norm{P_{k'}w}_{F_{k'}(T)} \\ 
&~{} \hspace{4em}+2^{3k/2}\sum_{\substack{k_3 \ge k+9\\|k_3-k'| \le 5}}2^{k_3}\norm{P_{k'}v_2}_{F_{k'}(T)}^2\norm{P_{k'}w}_{F_{k'}(T)} \\
&~{} \hspace{4em} +\sum_{|k-k_1|\le 5}2^{3k/2}\norm{P_{k_1}v_2}_{F_{k_1}(T)} \sum_{\substack{k_3 \ge k+9 \\ |k_2-k_3|\le 5}}2^{k_3}\norm{P_{k_2}v_2}_{F_{k_2}(T)}\norm{P_{k_3}w}_{F_{k_3}(T)}\\
&~{} \hspace{4em}+ 2^k\sum_{k_1 \ge k+9}\norm{P_{k_1}v_2}_{F_{k_1}(T)}\sum_{\substack{k_1 \le k_3 -10 \\ |k_2-k_3|\le 5}}2^{-(k_3 - k_1)}2^{k_3}\norm{P_{k_2}v_2}_{F_{k_2}(T)}\norm{P_{k_3}w}_{F_{k_3}(T)}\Bigg)\\
\lesssim&~{}\norm{v_2}_{F^{\frac74}(T)}^2\sum_{|k'-k|\le 5}\norm{P_{k'}w}_{F_{k'}(T)}^2 \\
&~{}+ \norm{v_2}_{F^{\frac54}(T)} 2^{3k/2}\norm{P_{k}w}_{F_{k}(T)}\sum_{\substack{k_3 \ge k+9\\|k_3-k'| \le 5}}2^{-k_3/4}\norm{P_{k'}v_2}_{F_{k'}(T)}\norm{P_{k'}w}_{F_{k'}(T)} \\
&~{}+\sum_{|k-k'|\le 5}2^{3k/2}\norm{P_{k'}v_2}_{F_{k'}(T)}\norm{P_{k'}w}_{F_{k'}(T)} \sum_{\substack{k_3 \ge k+9 \\ |k_2-k_3|\le 5}}2^{k_3}\norm{P_{k_2}v_2}_{F_{k_2}(T)}\norm{P_{k_3}w}_{F_{k_3}(T)}\\
&~{}+ \norm{v_2}_{F^{0+}(T)}2^k\norm{P_{k}w}_{F_{k}(T)}\sum_{\substack{k_3 \ge k+9 \\ |k_2-k_3|\le 5}}2^{k_3}\norm{P_{k_2}v_2}_{F_{k_2}(T)}\norm{P_{k_3}w}_{F_{k_3}(T)}.
\end{aligned}
\end{equation}
Collecting all \eqref{eq:A_3(k)_1,2} -- \eqref{eq:A_3(k)_5}, we conclude that for $s \ge 2$
\begin{equation}\label{eq:A_3(k)_1}
\sum_{k \ge 1} 2^{2sk} A_3(k) \lesssim \norm{v_2}_{F^{2}(T)}^2\norm{w}_{F^s(T)}^2
\end{equation}
and 
\begin{equation}\label{eq:A_3(k)_2}
\sum_{k \ge 1} A_3(k) \lesssim \norm{v_2}_{F^{2}(T)}^2\norm{w}_{F^0(T)}^2,
\end{equation}
for any $t_k \in (0,T]$.

Note that $A_4(k)$ is weaker than or comparable to $A_3(k)$ in the sense that one derivative taken in $P_{k_3}w$ is distributed to $P_{k_1}v_2$ or $P_{k_2}v_2$, thus $A_4(k)$ can be dealt with similarly as $A_3(k)$, and we obtain \eqref{eq:A_3(k)_1} and \eqref{eq:A_3(k)_2} for $A_4(k)$.

		
For $A_5(k)$, we readily obtain
		\[\begin{aligned}
			A_5(k) \lesssim&~{} \sum_{k_2 \ge k+10}\norm{P_{k_2}v_2}_{F_{k_2}(T)}^22^{5k/2}\norm{P_kw}_{F_k(T)}^2\\
			&+ \sum_{|k-k'|\le 5}2^{7k/2}\norm{P_{k'}v_2}_{F_{k'}(T)}^2\norm{P_{k'}w}_{F_{k'}(T)}^2  \\ 
			&+ \sum_{k_1 \le k-10}2^{k_1/2}\norm{P_{k_1}v_2}_{F_{k_1}(T)}\sum_{|k-k'|\le5}2^{2k}\norm{P_{k'}v_2}_{F_{k'}(T)}\norm{P_{k'}w}_{F_{k'}(T)}^2 \\
			\lesssim&~{} \norm{v_2}_{F^{2}(T)}^2\sum_{|k-k'|\le 5}\norm{P_{k'}w}_{F_{k'}(T)}^2, 
		\end{aligned}\]
		which implies
		\begin{align*}
			\sum_{k\ge 1}2^{2sk} A_5(k) \lesssim \norm{v_2}_{F^{2}(T)}^2\norm{w}_{F^s(T)}^2,
		\end{align*}
		whenever $s \ge 0$ and any $t_k \in (0,T]$.

Now, we focus on the rest terms of $I_1(t),I_2(t),I_3(t)$. By $F^s(T) \hookrightarrow C_TH^s(\T)$ (see Proposition \ref{prop:small data1-1}), the cubic terms in $I_1(t)$ can be easily controlled as
\[
\begin{aligned}
\sum_{k\ge 1}2^{2sk}\Big|\sum_{n}\int_0^{t_k} \chi_k^2(n) n^3 \wh{v}_1(-n)\wh{v}_2(-n)\wh{w}(n)\wh{w}(n) \; dt\Big| \lesssim \norm{v_1}_{F^{\frac32}(T)}\norm{v_2}_{F^{\frac32}(T)}\norm{w}_{F^s(T)}^2
\end{aligned}\]
		for any $s \ge 0$ and any $t_k \in (0,T]$.

For $I_1(t)$ with \eqref{eq:energy-nonlinear3-2} and \eqref{eq:energy-nonlinear4-2}, it suffices to consider
		\begin{align*}
			A_6(k):=\left|\int_0^{t_k}\sum_{n,\overline{\N}_{3,n}}\chi_{k_1}(n_1)\wh{v}_2(n_1)\chi_{k_2}(n_2)n_2^2\wh{v}_2(n_2)\chi_{k_3}(n_3)\wh{w}(n_3)\chi_k^2(n)n\wh{w}(n) \;dt \right|
		\end{align*}
		and
		\begin{equation}\label{eq:energy1-3.19}
		\left|\int_0^{t_k}\sum_{n,\overline{\N}_{3,n}}\chi_{k_1}(n_1)n_1\wh{v}_2(n_1)\chi_{k_2}(n_2)n_2\wh{v}_2(n_2)\chi_{k_3}(n_3)\wh{w}(n_3)\chi_k^2(n)n\wh{w}(n) \;dt \right|.
		\end{equation}
		
Since \eqref{eq:energy1-3.19} is weaker than or comparable to $A_6(k)$, it is enough to consider $A_6(k)$. Note that $A_6(k)$ becomes the worst when $k = \max(k_1,k_2,k_3,k)$ and $|k_2-k| \le 5$, since all derivatives are distributed to $P_{k_2}v_2$ and $P_{k}w$. Moreover, comparing to \cite{Kwak2018-2}, it is required to improve the bound of $A_6(k)$ under the case when\footnote{The bounds of $A_6(k)$ under the other cases are valid for Proposition \ref{prop:energy1-3}.}
\begin{equation}\label{eq:case}
k_1 \le k_3 - 10, \quad k_3 \le k_2 - 10, \quad \mbox{and} \quad |k_2 - k| \le 5.
\end{equation}
We (temporarily) denote the set of triple $(k_1,k_2,k_3)$ satisfying \eqref{eq:case} by $\Theta$. Lemma \ref{lem:energy1-1} leads us  that
		\[\begin{aligned}
			\sum_{ \Theta} A_6(k) \les&~{}   \sum_{\Theta}2^{-(k-k_3)}2^{2k}\norm{P_{k_1}v_2}_{F_{k_1}(T)}\norm{P_{k_3}w}_{F_{k_3}(T)}\norm{P_{k_2}v_2}_{F_{k_2}(T)}\norm{P_{k}w}_{F_k(T)}\\
			\les&~{} \|w\|_{F^{0}(T)} \sum_{\substack{|k-k_2|\le 5 \\ k_1 \le k-10}}  2^{2k}\norm{P_{k_1}v_2}_{F_{k_1}(T)}\norm{P_{k_2}v_2}_{F_{k_2}(T)}\norm{P_{k}w}_{F_k(T)},
		\end{aligned}\]
which concludes that
		\[\sum_{k\ge1}2^{2sk}\sum_{\Theta} A_6(k) \les \|v_2\|_{F^{2}(T)}\|v_2\|_{F^{2+s}(T)}\|w\|_{F^{0}(T)}\|w\|_{F^{s}(T)} \]
		for any $s \ge 0$ and $t_k \in (0,T]$.

\medskip
		
		\textbf{Estimates for the rest cubic terms in ${I}_{2,1}(t), {I}_{2,2}(t), I_{3,1}(t)$ and ${I}_{3,2}(t)$.} It immediately follows from \cite{Kwak2018-2} without any changes.

\medskip
		
\textbf{Estimates for  quintic and septic terms in \eqref{eq:energy1-3.3}.} Since the quintic and septic terms in \eqref{eq:energy1-3.3} contain less derivatives compared with cubic terms above, one can easily control these terms similarly as in \cite{Kwak2018-2}. Thus, we only point out (but already mentioned in \cite{Kwak2018-2}) the quintic resonances appeared in $I_{2,3}(t)$ and $I_{3,3}(t)$.

		\begin{rem}\label{rem:resonant2}
{\bf The case when $n_{33} + n = 0$.} Once considering the full modified energy \eqref{eq:new energy1-5} and the full nonlinear term \eqref{eq:energy-nonlinear3-1}, one can find 9-resonant interaction components in ${I}_{2,3}(t)$ as follows:\footnote{One can analogously handle the resonances appeared in $I_{3,3}(t)$, thus we only focus on $I_{2,3}(t)$.}
			\begin{equation}\label{eq:quintic resonant1}
				\sum_{n,\overline{n}\in \Gamma_4(\Z)}\wh{v}_\ell(n_1)\wh{v}_\ell(n_2)\psi_k(n-n_{31}-n_{32})\wh{v}_\ell(n_{31})\wh{v}_\ell(n_{32})\chi_k(n)n|\wh{w}(n)|^2 \hspace{2em} \ell=1,2,
			\end{equation}
			\begin{equation}\label{eq:quintic resonant2}
				\sum_{n,\overline{n}\in \Gamma_4(\Z)}\wh{v}_1(n_1)\wh{v}_2(n_2)\psi_k(n-n_{31}-n_{32})\wh{v}_1(n_{31})\wh{v}_2(n_{32})\chi_k(n)n|\wh{w}(n)|^2, \hspace{2em}
			\end{equation}
			\begin{equation}\label{eq:quintic resonant3}
				\sum_{n,\overline{n}\in \Gamma_4(\Z)}\wh{v}_1(n_1)\wh{v}_2(n_2)\psi_k(n-n_{31}-n_{32})\wh{v}_\ell(n_{31})\wh{v}_\ell(n_{32})\chi_k(n)n|\wh{w}(n)|^2 \hspace{2em} \ell=1,2,
			\end{equation}
			\begin{equation}\label{eq:quintic resonant4}
				\sum_{n,\overline{n}\in \Gamma_4(\Z)}\wh{v}_\ell(n_1)\wh{v}_\ell(n_2)\psi_k(n-n_{31}-n_{32})\wh{v}_1(n_{31})\wh{v}_2(n_{32})\chi_k(n)n|\wh{w}(n)|^2 \hspace{2em} \ell=1,2
			\end{equation}
			and for $(\ell,m)=(1,2)$ or $(2,1)$,
			\begin{equation}\label{eq:quintic resonant5}
				\sum_{n,\overline{n}\in \Gamma_4(\Z)}\wh{v}_\ell(n_1)\wh{v}_\ell(n_2)\psi_k(n-n_{31}-n_{32})\wh{v}_m(n_{31})\wh{v}_m(n_{32})\chi_k(n)n|\wh{w}(n)|^2, \hspace{2em}
			\end{equation}
			where $\overline{n} = (n_1,n_2,n_{31},n_{32})$. Under the symmetry $n_1+n_2+n_{31}+n_{32}=0$, we observe for any real-valued functions $f, g$ that
\begin{equation}\label{eq:quintic resonant}
				\begin{aligned}
					&\sum_{\overline{n} \in \Gamma_4(\Z)}\wh{f}(n_1)\wh{g}(n_2)\psi_k(n-n_{31}-n_{32})\wh{f}(n_{31})\wh{g}(n_{32})\chi_k(n)n|\wh{w}(n)|^2\\
					=&\sum_{\overline{n} \in \Gamma_4(\Z)} \overline{\wh{f}(-n_1)\wh{g}(-n_2)\psi_k(n-n_{31}-n_{32})}\overline{\wh{f}(-n_{31})\wh{g}(-n_{32})\chi_k(n)n|\wh{w}(n)|^2}\\
					=&\sum_{\overline{n} \in \Gamma_4(\Z)}\overline{\wh{f}(n_1)\wh{g}(n_2)\psi_k(n+n_{31}+n_{32})\wh{f}(n_{31})\wh{g}(n_{32})\chi_k(n)n|\wh{w}(n)|^2}\\
					=&\sum_{\overline{n} \in \Gamma_4(\Z)}\overline{\wh{f}(n_1)\wh{g}(n_2)\psi_k(n-n_1-n_2)\wh{f}(n_{31})\wh{g}(n_{32})\chi_k(n)n|\wh{w}(n)|^2}\\
					=&\sum_{\overline{n} \in \Gamma_4(\Z)}\overline{\wh{f}(n_1)\wh{g}(n_2)\psi_k(n-n_{31}-n_{32})\wh{f}(n_{31})\wh{g}(n_{32})\chi_k(n)n|\wh{w}(n)|^2}.
				\end{aligned}
			\end{equation}
The identity \eqref{eq:quintic resonant} is valid due to Remark \ref{rem:even real}. Above observation \eqref{eq:quintic resonant} with $f=g=v_\ell$ for \eqref{eq:quintic resonant1} or $f=v_1$ and $g = v_2$ for \eqref{eq:quintic resonant2} ensures that both \eqref{eq:quintic resonant1} and \eqref{eq:quintic resonant2} vanish, since those are purely real numbers. 

Moreover, one also shows similarly as \eqref{eq:quintic resonant} that the sum\footnote{It is possible because we chose ${\kappa}_{\ell,m}$ as the same number for all $1 \le \ell \le m \le 2$.} of \eqref{eq:quintic resonant3} and \eqref{eq:quintic resonant4} vanishes, indeed,
			\[\begin{aligned}
				&\overline{\sum_{\overline{n}\in \Gamma_4(\Z)}\wh{v}_1(n_1)\wh{v}_2(n_2)\chi_k(n-n_{31}-n_{32})\wh{v}_\ell(n_{31})\wh{v}_\ell(n_{32})\chi_k(n)n|\wh{w}(n)|^2}\\
				+&\overline{\sum_{\overline{n}\in \Gamma_4(\Z)}\wh{v}_\ell(n_1)\wh{v}_\ell(n_2)\chi_k(n-n_{31}-n_{32})\wh{v}_1(n_{31})\wh{v}_2(n_{32})\chi_k(n)n|\wh{w}(n)|^2}\\
				=&\sum_{\overline{n}\in \Gamma_4(\Z)}\wh{v}_\ell(n_1)\wh{v}_\ell(n_2)\chi_k(n-n_{31}-n_{32})\wh{v}_1(n_{31})\wh{v}_2(n_{32})\chi_k(n)n|\wh{w}(n)|^2\\
				+&\sum_{\overline{n}\in \Gamma_4(\Z)}\wh{v}_1(n_1)\wh{v}_2(n_2)\chi_k(n-n_{31}-n_{32})\wh{v}_\ell(n_{31})\wh{v}_\ell(n_{32})\chi_k(n)n|\wh{w}(n)|^2,
			\end{aligned}\]
for $\ell=1,2$. An analogous argument holds for \eqref{eq:quintic resonant5} by summing all terms. 
\end{rem}
	
\begin{rem}	
{\bf The case when $n_{32} + n = 0$.} For the other quintic resonances, it suffices to consider 
			\[QR(k) :=\left|\int_0^{t_k}\sum_{n,\overline{n}\in \Gamma_4(\Z)}\wh{v}_2(n_1)\wh{v}_2(n_2)\chi_k(n-n_{31}-n_{33})\wh{v}_2(n_{31})\wh{w}(n_{33})\chi_k(n)n\wh{v}_2(-n)\wh{w}(n)\; dt\right|.\]
Similarly as in \eqref{eq:energy1-1.5} with \eqref{eq:tri-block estimate-a1}, we obtain
\[\begin{aligned}
QR(k) \lesssim&~{} \sum_{k_1,k_2,k_{31},k_{33} \ge 0}2^{(k_{min}+k_{thd})/2}\norm{P_{k_1}v_2}_{F_{k_1}(T)}\norm{P_{k_2}v_2}_{F_{k_2}(T)}\norm{P_{k_{31}}v_2}_{F_{k_{31}}(T)}\\
				&\hspace{9em} \times\norm{P_{k_{33}}w}_{F_{k_{33}}(T)}\sum_{|k-k'|\le 5}2^{k}\norm{P_{k'}v_2}_{L_T^{\infty}L^2}\norm{P_{k'}w}_{L_T^{\infty}L^2}\\
				\lesssim&~{} \norm{v_2}_{F^{\frac12+}(T)}^3\norm{w}_{F^0(T)}\sum_{|k-k'|\le 5}2^{k}\norm{P_{k'}v_2}_{L_T^{\infty}L^2}\norm{P_{k'}w}_{L_T^{\infty}L^2},
			\end{aligned}\]
which, in addition to the embedding $F^s(T) \hookrightarrow C_TH^s(\T)$, implies
			\begin{equation*}
				\sum_{k\ge 1}2^{2sk}QR(k) \lesssim \norm{v_2}_{F^{\frac12+}(T)}^3\norm{w}_{F^0(T)}\norm{v_2}_{F^{s+1}(T)}\norm{w}_{F^{s}(T)},
			\end{equation*}
for any $s \ge 0$ and any $t_k \in (0,T]$.
			
The same argument also holds for the resonances in $I_{3,3}(t)$.
		\end{rem}
	\end{proof}
	
	The following lemma implies that ${E}_{T}^s(w)$ and $\norm{w}_{E^s(T)}$ are comparable.
	\begin{lem}[Lemma 6.11 of \cite{Kwak2018-2}]\label{lem:comparable energy1-2} Let $s > \frac12$ and $T \in (0,1]$. Then, for smooth solutions $v_1, v_2 \in C([-T,T];H^{\infty}(\T))$ to \eqref{eq:5mkdv4} and $w \in E^s(T) \cap  C([-T,T];H^{\infty}(\T))$ to \eqref{eq:5mkdv8}, there exists $0 < \delta \ll 1$ such that if $\norm{v_1}_{L_T^{\infty}H^s(\T)}, \norm{v_2}_{L_T^{\infty}H^s(\T)} \le \delta$, we have
		\[\frac12\norm{w}_{E^s(T)}^2 \le E_{T}^s(w) \le \frac32\norm{w}_{E^s(T)}^2.\]
	\end{lem}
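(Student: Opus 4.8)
The plan is to show directly that the two correction terms appearing in the modified energy $E_k(w)(t)$ are, at each time $t\in[-T,T]$, dominated by $\delta^2$ times a finite sum of neighbouring $L^2$ frequency blocks of $w$, so that they are absorbed by the principal term $\norm{P_kw(t)}_{L^2}^2$. This is exactly the argument of Lemma 6.11 in \cite{Kwak2018-2}: the one new ingredient — the term carrying the coefficient $\epsilon$ — is structurally identical to the $\kappa$-term (it merely replaces $\psi_k(n_3)$ by $\chi_k(n_3)$), and the weight $\beta_{j,k}$ plays no role here since the modified energy involves no modulation variables.

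First I would fix $t$ and $k\ge 1$. In both correction terms the output frequency $n$ is localized by $\chi_k(n)$ and the frequency $n_3$ by $\psi_k(n_3)$ (resp. $\chi_k(n_3)$) to $|n|\sim|n_3|\sim 2^k$, so the two reciprocal factors $1/n$ and $1/n_3$ each produce a gain $2^{-k}$, while $\psi_k=n\chi_k'$ and $\chi_k$ are uniformly bounded. Hence each correction term is bounded by
\[
2^{-2k}\Big|\sum_{n,\overline{\N}_{3,n}}\wh v_2(n_1)\wh v_2(n_2)\,(\psi_k\wh w)(n_3)\,(\chi_k\wh w)(n)\Big|.
\]
Applying Cauchy--Schwarz in $n$ and then Young's convolution inequality $\ell_n^1\ast\ell_n^1\ast\ell_n^2\hookrightarrow\ell_n^2$, and using $s>\frac12$ so that $\normo{\wh v_2}_{\ell_n^1}\les\norm{v_2}_{H^s(\T)}$, this is
\[
\les 2^{-2k}\normo{\wh v_2}_{\ell_n^1}^2\,\normo{\psi_k\wh w}_{\ell_n^2}\,\normo{\chi_k\wh w}_{\ell_n^2}
\les 2^{-2k}\norm{v_2}_{L_T^{\infty}H^s}^2\sum_{|k'-k|\le 2}\norm{P_{k'}w(t)}_{L^2}^2 .
\]
Since $2^{-2k}\le\frac14$ for $k\ge1$ and $\norm{v_2}_{L_T^{\infty}H^s}\le\delta$ (and likewise the bound with $\norm{v_1}\norm{v_2}$, etc., for the full energy \eqref{eq:new energy1-5}), we get
\[
\big|E_k(w)(t)-\norm{P_kw(t)}_{L^2}^2\big|\les \delta^2\sum_{|k'-k|\le 2}\norm{P_{k'}w(t)}_{L^2}^2
\]
uniformly in $t\in[-T,T]$.

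Finally I would multiply by $2^{2sk}$, take the supremum over $t_k\in[-T,T]$ (using $\sup(f+g)\le\sup f+\sup g$ and $2^{2sk}\sim 2^{2sk'}$ for $|k-k'|\le 2$) and sum over $k\ge1$; since the $P_0$-contribution is literally identical in $E_T^s(w)$ and $\norm{w}_{E^s(T)}^2$, this yields $\big|E_T^s(w)-\norm{w}_{E^s(T)}^2\big|\les \delta^2\norm{w}_{E^s(T)}^2$, and choosing $\delta$ small enough gives $\frac12\norm{w}_{E^s(T)}^2\le E_T^s(w)\le\frac32\norm{w}_{E^s(T)}^2$. The only delicate bookkeeping is at the lowest dyadic block $k=1$, where the neighbouring block $k'=0$ is touched; there the low-frequency portion of $w$ is controlled crudely exactly as in \cite{Kwak2018-2}. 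Everything else is elementary once the decisive gain $2^{-2k}$ coming from the two reciprocal factors is extracted — this is precisely why the correction terms are genuinely lower order and hence absorbable.
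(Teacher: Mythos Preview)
Your proposal is correct and follows the same elementary argument as in the cited reference: extract the factor $2^{-2k}$ from $1/n_3\cdot 1/n$, then use Cauchy--Schwarz plus Young's inequality together with $\|\wh v_\ell\|_{\ell^1}\les\|v_\ell\|_{H^s}$ for $s>\frac12$ to absorb the correction into $\delta^2$ times neighbouring blocks of $w$. One small remark: on $\T$ the ``delicate bookkeeping at $k=1$'' you flag is in fact empty, since for integer frequencies the supports of $\chi_k,\psi_k$ ($k\ge1$) lie in $\{|n|\ge2\}$ and hence never touch the support $\{|n|\le1\}$ of $\chi_0$; thus $P_0w(t)$ at $t\neq0$ never enters and no separate treatment is needed.
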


	As a corollary to Lemma \ref{lem:comparable energy1-2} and Proposition \ref{prop:energy1-3}, we obtain an \emph{a priori bound} of $\norm{w}_{E^s(T)}$ for the difference of two solutions.
	\begin{cor}\label{cor:energy1-3}
		Let $s \ge 2$ and $T \in (0,1]$. Then, for smooth solutions $v_1, v_2 \in F^{2s}(T) \cap C([-T,T];H^{\infty}(\T))$ to \eqref{eq:5mkdv4} and $w \in F^s(T) \cap  C([-T,T];H^{\infty}(\T))$ to \eqref{eq:5mkdv8}, there exists $0 < \delta \ll 1$ such that if $\norm{v_1}_{L_T^{\infty}H^s(\T)}, \norm{v_2}_{L_T^{\infty}H^s(\T)} \le \delta$, we have
		\begin{equation}\label{eq:energy1-3.1.1}
			\begin{aligned}
				&\norm{w}_{E^0(T)}^2 \\
				&\lesssim (1+ \norm{v_{1,0}}_{H^s(\T)}^2+\norm{v_{1,0}}_{H^s(\T)}\norm{v_{2,0}}_{H^s(\T)}+\norm{v_{2,0}}_{H^s(\T)}^2)\norm{w_0}_{L^2}^2\\
				&\;\;+\left(\norm{v_1}_{F^{s}(T)}^2+\norm{v_1}_{F^{s}(T)}\norm{v_2}_{F^{s}(T)}+\norm{v_2}_{F^{s}(T)}^2\right)\norm{w}_{F^0(T)}^2\\
				&\;\;+\Big(\sum_{m=0}^{4}\norm{v_1}_{F^{s}(T)}^{4-m}\norm{v_2}_{F^{s}(T)}^m\Big)\norm{w}_{F^0(T)}^2\\
				&\;\;+\Big(\sum_{m=0}^{6}\norm{v_1}_{F^{s}(T)}^{6-m}\norm{v_2}_{F^{s}(T)}^m\Big)\norm{w}_{F^0(T)}^2
			\end{aligned} 
		\end{equation}
		and
		\begin{equation}\label{eq:energy1-3.2.1}
			\begin{aligned}
				&\norm{w}_{E^s(T)}^2 \\
				&\lesssim (1+ \norm{v_{1,0}}_{H^s(\T)}^2+\norm{v_{1,0}}_{H^s(\T)}\norm{v_{2,0}}_{H^s(\T)}+\norm{v_{2,0}}_{H^s(\T)}^2)\norm{w_0}_{H^s(\T)}^2\\
				&\;\;+\left(\norm{v_1}_{F^s(T)}^2+\norm{v_1}_{F^s(T)}\norm{v_2}_{F^s(T)}+\norm{v_2}_{F^s(T)}^2\right)\norm{w}_{F^s(T)}^2\\
				&\;\;+\left(\sum_{\ell,m=1,2}\norm{v_\ell}_{F^{\frac12}(T)}\norm{v_m}_{F^{2s}(T)}\right)\norm{w}_{F^0(T)}\norm{w}_{F^s(T)}\\
				&\;\;+\Big(\sum_{m=0}^{4}\norm{v_1}_{F^{s}(T)}^{4-m}\norm{v_2}_{F^{s}(T)}^m\Big)\norm{w}_{F^s(T)}^2\\
				&\;\;+\Big(\sum_{m=0}^{3}\norm{v_1}_{F^{s}(T)}^{3-m}\norm{v_2}_{F^{s}(T)}^m\Big)(\norm{v_1}_{F^{2s}(T)}+\norm{v_2}_{F^{2s}(T)})\norm{w}_{F^0(T)}\norm{w}_{F^s(T)}\\
				&\;\;+\Big(\sum_{m=0}^{6}\norm{v_1}_{F^{s}(T)}^{6-m}\norm{v_2}_{F^{s}(T)}^m\Big)\norm{w}_{F^s(T)}^2.
			\end{aligned}
		\end{equation} 
	\end{cor}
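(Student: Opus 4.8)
The plan is to derive Corollary \ref{cor:energy1-3} directly from Proposition \ref{prop:energy1-3} and the comparability Lemma \ref{lem:comparable energy1-2}; no new multilinear analysis is needed, only a comparison between the modified energy functional $E_T^{s}(w)$ and the genuine energy norm $\|w\|_{E^s(T)}^2$.

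First I would invoke Lemma \ref{lem:comparable energy1-2}: since $s\ge 2>\tfrac12$, the solutions $v_1,v_2$ are smooth, and $\|v_1\|_{L_T^\infty H^s(\T)},\|v_2\|_{L_T^\infty H^s(\T)}\le\delta$, the lemma gives
\[
\tfrac12\|w\|_{E^s(T)}^2\le E_T^s(w)\le\tfrac32\|w\|_{E^s(T)}^2 ,
\]
the smoothness of $w$ placing it in $E^s(T)$. Thus $\|w\|_{E^s(T)}^2\le 2E_T^s(w)$, and substituting the second estimate of Proposition \ref{prop:energy1-3} for $E_T^s(w)$ on the right yields \eqref{eq:energy1-3.2.1}. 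One cosmetic discrepancy must be accounted for: Proposition \ref{prop:energy1-3} carries a factor $\|v_\ell\|_{F^s(T)}$ in its cross term whereas \eqref{eq:energy1-3.2.1} has the weaker $\|v_\ell\|_{F^{1/2}(T)}$. This is legitimate because, on inspecting the proof of Proposition \ref{prop:energy1-3}, that factor enters only through the estimates of $A_6(k)$ and of the quintic-resonance term $QR(k)$, and there only $F^{1/2+}(T)\hookrightarrow F^{1/2}(T)$ regularity of $v_\ell$ is actually used, the remaining derivatives being assigned to $P_{k_2}v_\ell$ and to $w$. For the $L^2$-level bound \eqref{eq:energy1-3.1.1} I would run the same argument at regularity $0$: here the comparability $\tfrac12\|w\|_{E^0(T)}^2\le E_T^0(w)\le\tfrac32\|w\|_{E^0(T)}^2$ for $\delta$ small is immediate, since the correction terms in $E_k(w)$ carry the weights $\tfrac1{n_3}$ and $\tfrac1n$ with $|n_3|\sim|n|\sim 2^k$, so that after summing in $k$ the pair $(v_2,v_2)$ contributes only $\lesssim\|v_2\|_{L_T^\infty H^{1/2+}(\T)}^2\,\|w\|_{E^0(T)}^2\lesssim\delta^2\|w\|_{E^0(T)}^2$, which is absorbed into the left-hand side; then the first estimate of Proposition \ref{prop:energy1-3} gives \eqref{eq:energy1-3.1.1}.

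I do not expect a real obstacle here, as the corollary is essentially a bookkeeping step. The two points that do require some care are (i) checking the hypotheses of Lemma \ref{lem:comparable energy1-2} and of its regularity-$0$ counterpart at both levels, which the smallness assumption and $s\ge2$ supply, and (ii) matching the regularity indices: one must note that the auxiliary norms $F^{2+s}(T)$ and $F^{s+1}(T)$ that surface inside the proof of Proposition \ref{prop:energy1-3} are dominated by $F^{2s}(T)$ exactly because $s\ge2$, so that the right-hand sides collapse to the ones displayed in \eqref{eq:energy1-3.1.1}--\eqref{eq:energy1-3.2.1}.
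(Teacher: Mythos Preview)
Your proposal is correct and matches the paper's approach: the corollary is presented there simply as the combination of Lemma~\ref{lem:comparable energy1-2} with Proposition~\ref{prop:energy1-3}, and that is exactly what you do. Your extra remarks---the direct verification of the $E^0$--$E_T^0$ comparability (since Lemma~\ref{lem:comparable energy1-2} is stated only for $s>\tfrac12$), the observation that $F^{2+s},F^{s+1}\hookrightarrow F^{2s}$ when $s\ge 2$, and the reconciliation of the $F^{\frac12}$ factor in \eqref{eq:energy1-3.2.1} with the $F^s$ factor in Proposition~\ref{prop:energy1-3} by inspecting the $A_6(k)$ and $QR(k)$ bounds---correctly fill in points the paper leaves implicit.
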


	\begin{rem}\label{rem:energysymm}
	By taking $v_1=0$, $v_2=v$, and $w=v$ in Corollary \ref{cor:energy1-3}, we obtain the energy estimates, an {\em a priori bound} of $\norm{v}_{E^s(T)}$ for a smooth solution $v$ to the equation \eqref{eq:5mkdv4}.

\end{rem}

	\begin{cor}\label{cor:energy1-2}
		Let $s \ge 0$ and $T \in (0,1]$. Then, for a smooth solution $v \in F^2(T)\cap F^s(T) \cap C([-T,T];H^{\infty}(\T))$ to \eqref{eq:5mkdv4}, there exists $0 < \delta \ll 1$ such that if $\norm{v}_{L_T^{\infty}H^s(\T)} \le \delta$, we have
		\begin{align}
			\begin{aligned}\label{eq:energy1-2.1.1}
				&\norm{v}_{E^s(T)}^2 \les (1+ \norm{v_0}_{H^s(\T)}^2)\norm{v_0}_{H^s(\T)}^2\\
				&\hspace{2.3cm} + \left(1 + \norm{v}_{F^{\frac12+}(T)}^2 + \norm{v}_{F^{\frac12+}(T)}^4\right)\norm{v}_{F^{2}(T)}^2\norm{v}_{F^s(T)}^2.
			\end{aligned}
		\end{align} 
	\end{cor}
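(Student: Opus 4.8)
The plan is to obtain Corollary~\ref{cor:energy1-2} by specializing the modified-energy argument behind Proposition~\ref{prop:energy1-3} (equivalently Corollary~\ref{cor:energy1-3}, cf.\ Remark~\ref{rem:energysymm}) to a single solution: set $v_1=0$ and $v_2=w=v$. With this choice the equation~\eqref{eq:5mkdv8} for $w$ coincides with~\eqref{eq:5mkdv4} for $v$, the localized modified energy~\eqref{eq:new energy1-5} reduces to a modified energy $E_k(v)$ of $v$ itself whose cubic corrections are built from two copies of $v$ together with the cutoffs $\psi_k,\chi_k$, and Lemma~\ref{lem:comparable energy1-2} gives $\tfrac12\|v\|_{E^s(T)}^2\le E_T^s(v)\le \tfrac32\|v\|_{E^s(T)}^2$ once $\|v\|_{L_T^\infty H^s(\T)}\le\delta$. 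Hence it suffices to bound $\sum_{k\ge1}2^{2sk}\sup_{t_k}|E_k(v)(t_k)|$, that is, $\big|\int_0^{t_k}\frac{d}{dt}E_k(v)\,dt\big|$ summed against $2^{2sk}$, together with $E_k(v)(0)$; the latter is $\|P_kv_0\|_{L^2}^2$ plus a quartic correction which is harmless because the factors $\tfrac1{n_3},\tfrac1n$ only gain derivatives, so summing in $k$ produces the $(1+\|v_0\|_{H^s(\T)}^2)\|v_0\|_{H^s(\T)}^2$ contribution.

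For the time derivative I would differentiate $E_k(v)$, substitute~\eqref{eq:5mkdv4}, and sort the output by degree exactly as in the proof of Proposition~\ref{prop:energy1-3}: cubic terms (cubic nonlinearity paired with $v$, plus the linear part of $\partial_t$ of the cubic corrections), quintic terms (quintic nonlinearity paired with $v$, plus cubic corrections whose $\partial_t$ hits a cubic nonlinearity), and septic terms. The cubic terms are the core. After symmetrizing $N_2(v)$ in the single-solution form and choosing $\kappa=-\tfrac43$, $\epsilon=-\tfrac23$, the high$\times$low$\times$low$\Rightarrow$high pieces carrying two derivatives on the highest frequency are replaced, up to commutator remainders, by $A_1(k)+A_2(k)$-type quantities; Lemma~\ref{lem:commutator1} (which uses that $\chi_k,\psi_k$ are even and a second-order mean value theorem, cf.\ Remark~\ref{rem:even real}) together with Lemma~\ref{lem:energy1-1} bounds these by $\|v\|_{F^2(T)}^2\|v\|_{F^s(T)}^2$ for every $s\ge0$. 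The high$\times$high$\Rightarrow$low and high$\times$high$\times$high pieces are handled directly via the large resonance lower bound~\eqref{eq:high modulation} and the block estimates of Lemma~\ref{lem:tri-L2}, again yielding $\|v\|_{F^2(T)}^2\|v\|_{F^s(T)}^2$. The essential simplification over Corollary~\ref{cor:energy1-3} is that, since the ``difference'' factor $w$ here equals $v$, the whole quadruple symmetrizes and can be weighted uniformly by $2^{sk}$ on the two highest factors and by $2^{2k}$ (absorbed into $\|v\|_{F^2(T)}$) and by $\|v\|_{F^{\frac12+}(T)}$ on the rest; thus the terms that in the Lipschitz estimate forced $F^{2s}$ or $F^{2+s}$ norms (and the restriction $s\ge2$) do not appear, and the argument closes for all $s\ge0$.

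The quintic and septic terms carry strictly fewer derivatives per factor and are treated as in \cite{Kwak2018-2}: block estimates (Lemma~\ref{lem:tri-L2}) plus the embedding $F^s(T)\hookrightarrow C([-T,T];H^s(\T))$ give bounds of the form $\|v\|_{F^{\frac12+}(T)}^2\|v\|_{F^2(T)}^2\|v\|_{F^s(T)}^2$ and $\|v\|_{F^{\frac12+}(T)}^4\|v\|_{F^2(T)}^2\|v\|_{F^s(T)}^2$, respectively. The quintic ``$n_{ij}+n=0$'' resonances produced by differentiating the cubic corrections are dispatched as in Remark~\ref{rem:resonant2}: by the reality of $v$ and the evenness of $\psi_k,\chi_k$ they are purely real and cancel (or vanish). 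Collecting the $t=0$ contribution with the cubic, quintic, and septic bounds and invoking Lemma~\ref{lem:comparable energy1-2} yields~\eqref{eq:energy1-2.1.1}.

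The main obstacle I expect is precisely the cubic high$\times$low$\times$low$\Rightarrow$high interaction --- the genuine derivative loss that is the source of the quasilinear behaviour. The naive bound costs two extra derivatives on a high-frequency factor (a $\|v\|_{F^{s+2}(T)}$-type quantity), which is not permitted in~\eqref{eq:energy1-2.1.1}; only the cancellation engineered into the modified energy, followed by the second-order commutator estimate of Lemma~\ref{lem:commutator1}, transfers these derivatives onto the low-frequency ``coefficient'' $v^2$ and reduces the loss to the admissible $\|v\|_{F^2(T)}^2$. Checking that every frequency ordering appearing in the proof of Proposition~\ref{prop:energy1-3} (the analogues of Case~I through Case~V, and the $A_6(k)$-type term) survives the single-solution specialization with the uniform $2^{sk}$-weighting, leaving no residual $F^{2s}$ or $F^{2+s}$ norm, is the step demanding the most care.
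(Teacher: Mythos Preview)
Your proposal is correct and follows the paper's intended approach: the paper itself derives Corollary~\ref{cor:energy1-2} by the specialization $v_1=0$, $v_2=w=v$ indicated in Remark~\ref{rem:energysymm}, and your write-up is a careful elaboration of precisely that. In particular you correctly identify the one point that requires care beyond literal substitution into~\eqref{eq:energy1-3.2.1}: in the single-solution case the full symmetry among the four $\hat v$-factors lets one redistribute the $2^{sk}$ weights onto the two highest-frequency factors (so that the $A_6(k)$--type contribution, which in the difference estimate forces a $\|v_2\|_{F^{2+s}}$ or $\|v\|_{F^{2s}}$ norm, is absorbed into $\|v\|_{F^2}^2\|v\|_{F^s}^2$), exactly as you describe.
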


	\section{Proof of Theorem \ref{thm:failure}}\label{sec:illposed}
In this section, we prove Theorem \ref{thm:failure}, that is, the flow map of \eqref{eq:5mkdv3_0} fails to be smooth in $H^s(\T)$ for any $s > 0$. First, we briefly introduce an argument described in \cite{Tzvetkov1999} (we also refer to \cite{BT2006} for more abstract scenario) for general nonlinear dispersive equations:
\begin{align}\label{eq:dispersive-maineq}
	\left\{		\begin{aligned} 
		\partial_t u  + L u &= \mathfrak{M}(u),\\
		u(0,\cdot) &= \de u_0,
	\end{aligned}\right.
\end{align}
for $\de \in  \R$, where  $L=ih(-i\nabla)$ for some real-valued polynomial $h$ and the degree of nonlinearity $\mathfrak{M}$ is $m$ and $u_0 \in H^s(\T)$ for some $s\in \R$. Suppose that $u(\de,t,x)$ is a solution to \eqref{eq:dispersive-maineq} and flow map $\de u_0 \mapsto u(t)$ to be $C^m$. By Duhamel's principle \eqref{Duhamel}, we have
\begin{align}\label{eq:dispersive-duhamel}
	u(\de,t,x) = \de S(t)u_0 + \int_0^t S(t-t') \mathfrak{M}(u(\de,t,x)) dt',
\end{align}
where $S(t)= e^{tL}$. 

 By a direct computation, we see that
\begin{align}\label{eq:dispersive-derive}
	\begin{aligned}
		\partial_\de u(\de,t,x) \Big|_{\de=0} &=  S(t)u_0 =:u_1(t,x),\\
		\partial_\de^\ell u(\de,t,x) \Big|_{\de=0} &=  0, \quad 2 \le \ell < m,\\
		\partial_\de^m u(\de,t,x) \Big|_{\de=0} &= m! \int_0^t S(t-t') \mathfrak{M}(u_1(t',x))dt' =:u_m(t,x).
	\end{aligned}
\end{align}
In a similar way to above, we can define $u_\ell$ for $\ell=m+1, m+2\cdots$, and with $u(0,t,x) = 0$, we formally write $u$ as a Taylor expansion
\begin{align*}
	u(\de,t,x) = \de u_1(t,x) + \de^m u_m(t,x) + \de^{m+1} u_{m+1}(t,x) + \cdots.
\end{align*}
By the hypothesis, the following estimate holds true:
\begin{align}\label{eq:ck-smoothness}
	\normo{u_m(t,\cdot)}_{H^s(\T)} \les \|u_0\|_{H^s(\T)}^m.
\end{align}
Therefore, the flow map of \eqref{eq:dispersive-maineq} is not $C^m$, if one constructs a suitable $u_0$ such that \eqref{eq:ck-smoothness} is valid.

\medskip

For the proof of Theorem \ref{thm:failure}, recall Duhamel's formula of \eqref{eq:5mkdv3_0}:
\begin{align}\label{eq:5mkdv-c5}
	\begin{aligned}
		\wh{v}(t,n) =&~{} \delta e^{it\mu(n)} v_0 \\
&~{}- 20i\int_0^t e^{i(t-t')\mu(n)} n^3|\wh{v}(n)|^2\wh{v}(n)dt'\\
		&~{}+10in \sum_{\N_{3,n}} \int_0^t e^{i(t-t')\mu(n)}\wh{v}(n_1)\wh{v}(n_2)n_3^2\wh{v}(n_3) dt'\\
		&~{}+10in \sum_{\N_{3,n}} \int_0^t e^{i(t-t')\mu(n)}\wh{v}(n_1)n_2\wh{v}(n_2)n_3\wh{v}(n_3) dt'\\
		&~{}+6i n\sum_{\N_{5,n}} \int_0^t e^{i(t-t')\mu(n)} \wh{v}(n_1)\wh{v}(n_2)\wh{v}(n_3)\wh{v}(n_4)\wh{v}(n_5) dt'
	\end{aligned}
\end{align}
with initial datum
\[
v(0,\cdot) =  \de v_0,
\]
for $\de \in \R$, where $\mu(n)=n^5 + c_1n^3 + c_2n$.

\begin{rem}
As is well-known, the flow map fails to be uniformly continuous when $s < \frac32$, if assuming the (local) well-posedness holds at the same level (see, for instance, \cite{BGT2002, TT2004, Kwak2020} for the same issues) due to the strong cubic resonance $in^3|\wh{v}(n)|^2\wh{v}(n)$. We, however, emphasize that even the equation \eqref{eq:5mkdv-c5} without its resonance cannot allow (at least) $C^5$-differentiability of the flow, because the strength of its non-resonant interactions are still much stronger than the dispersive smoothing effect. In what follows, to illustrate this phenomenon, we thus only consider the equation \eqref{eq:5mkdv-c5} without 
\[- 20i\int_0^t e^{i(t-t')\mu(n)} n^3|\wh{v}(n)|^2\wh{v}(n)dt'.\] 

On the other hand, a direct computation under \eqref{eq:counter-c5} gives\footnote{One needs to control
\[\int_0^t e^{i(t-t')\mu(n)} n^3|\wh{v_1}(n)|^2\wh{v_3}(n)dt',\]
where $v_\ell$, $\ell=1,3$, are defined as in \eqref{eq:dispersive-derive}.}   
\[\left\|\partial_\delta^5 \left( \int_0^t e^{i(t-t')\mu(n)} n^3|\wh{v}(n)|^2\wh{v}(n)dt'\right)\right\|_{H^s(\T)} \lesssim t^2N^{6-4s},\]
which can compete against our target \eqref{eq:tN} below $H^{\frac32}$.
\end{rem}

\begin{rem}\label{rem:C3}
Motivated by the failure of \eqref{eq:failure-xsb} for all $s \in \R$, the $C^3$-ill-posedness of \eqref{eq:5mkdv-gen} can be taken into account (Theorem \ref{thm:C^3 illposed}) due to the cubic (high-low) resonances. However, such resonant terms in \eqref{eq:5mkdv-c5} have already been removed by the re-normalization of the equation. Thus, for \eqref{eq:5mkdv-c5}, we will construct an initial data for which \eqref{eq:ck-smoothness}, $m=5$, fails. For Theorem \ref{thm:C^3 illposed}, we only provide a particular quadruple for frequencies, see Remark \ref{rem:C^3diffexample}.
\end{rem}

One can expect that the flow badly behaves, when being affected by the second nonlinear interaction in \eqref{eq:5mkdv-c5}, due to the presence of three derivatives in one component. It, thus, suffices to deal with
\begin{align}\label{eq:c5-failure}
	n \sum_{\N_{3,n}} \int_0^t e^{i(t-t')\mu(n)}\wh{v}(n_1)\wh{v}(n_2)n_3^2\wh{v}(n_3) dt'.
\end{align}

We define an interaction function $f(t) = S(-t)v(t)$. Then, a direct calculation leads us that
\begin{align}\label{eq:5mkdv-inter}
	\begin{aligned}
	\partial_t \wh{f(t)} &= e^{-it\mu(n)} \left(\partial_t \wh{v}(t,n) -i\mu(n)\wh{v}(t,n) \right)	\\
		&= e^{-it\mu(n)} 10in\sum_{\N_{3,n}} \wh{v}(n_1)\wh{v}(n_2)n_3^2\wh{v}(n_3) \\
		&\quad +e^{-it\mu(n)} 10i n\sum_{\N_{3,n}}  \wh{v}(n_1)n_2\wh{v}(n_2)n_3\wh{v}(n_3) \\
		&\quad +e^{-it\mu(n)} 6i  n\sum_{\N_{5,n}}  e^{i(t-t')\mu(n)} \wh{v}(n_1)\wh{v}(n_2)\wh{v}(n_3)\wh{v}(n_4)\wh{v}(n_5).
	\end{aligned}
\end{align}
Note that we rewrite \eqref{eq:c5-failure} as
\begin{equation}\label{eq:normal form}
\begin{aligned}
	\eqref{eq:c5-failure} 	&= 	e^{it\mu(n)}\sum_{\N_{3,n}} \int_0^t e^{it'\phi(\textbf{n})} n \wh{f}(n_1)\wh{f}(n_2)n_3^2\wh{f}(n_3)   dt'\\
	&= 	ie^{it\mu(n)}\sum_{\N_{3,n}} \int_0^t \frac{\partial_{t'} (e^{it'\phi(\textbf{n})})}{\phi(\textbf{n})} n \wh{f}(n_1)\wh{f}(n_2)n_3^2\wh{f}(n_3)   dt',
\end{aligned}
\end{equation}
where $\phi(\textbf{n}) = -\mu(n) + \mu(n_1) + \mu(n_2) + \mu(n_3)$ with $\textbf{n}=(n,n_1,n_2,n_3)$. Then, by the integration by parts with respect to the time variable $t'$, \eqref{eq:c5-failure} can be distributed as following terms: 

\begin{subequations}\label{eq:c5-normal}
	\begin{align}
		& \sum_{\N_{3,n}}\frac{ e^{it'\mu(n)}}{\phi(\textbf{n})} n \wh{v}(n_1)\wh{v}(n_2)n_3^2\wh{v}(n_3)\Big|_{t'=0}^{t'=t}, \label{eq:c5-normal-a}\\
		& \sum_{\N_{3,n}}	\int_0^t e^{it'\mu(n)} \frac{n}{\phi(\textbf{n})}e^{it'\mu(n_1)}\partial_{t'} (\wh{f}(n_1))\wh{v}(n_2)n_3^2\wh{v}(n_3)   dt', \label{eq:c5-normal-b}\\
		& \sum_{\N_{3,n}} \int_0^t e^{it'\mu(n)} \frac{n}{\phi(\textbf{n})}  \wh{v}(n_1)e^{it'\mu(n_2)}\partial_{t'}(\wh{f}(n_2))n_3^2\wh{v}(n_3)   dt',\label{eq:c5-normal-c}\\
		&\sum_{\N_{3,n}} \int_0^t e^{it'\mu(n)} \frac{n}{\phi(\textbf{n})} \wh{v}(n_1)\wh{v}(n_2)n_3^2 e^{it'\mu(n_3)}\partial_{t'}(\wh{f}(n_3) )  dt'.\label{eq:c5-normal-d}
	\end{align}
\end{subequations}

\begin{rem}\label{rem:remove}
On one hand, $\partial_\de^5 \wh{v}(\de,t,n)\Big|_{\de=0}$ for the cubic term \eqref{eq:c5-normal-a} contains $v_3$ (defined in \eqref{eq:dispersive-derive}), and it can be treated similarly as the other terms, see Remark \ref{rem:v_3} for the detailed expositions. On the other hand, distributions of $\partial_{t'}\hat{f}$ in \eqref{eq:c5-normal-b} and \eqref{eq:c5-normal-c} are better than \eqref{eq:c5-normal-d} in the sense that under a suitable frequency relation (will be given below), $H^s(\T)$-norm of \eqref{eq:c5-normal-b} and \eqref{eq:c5-normal-c} is much less than the same norm of \eqref{eq:c5-normal-d}. For the details, see Appendix \ref{sec:appendix-ill1}. 
\end{rem}

By Remark \ref{rem:remove}, we, here, only consider\eqref{eq:c5-normal-d}. Using \eqref{eq:5mkdv-inter}, we decompose \eqref{eq:c5-normal-d} into the following:
\begin{subequations}\label{eq:quintic-abcde}
	\begin{align}
%
		&\sum_{\N_{3,n}} \int_0^t e^{it'\mu(n)} \frac{n}{\phi(\textbf{n})}  \wh{v}(n_1) \wh{v}(n_2)n_3^3 \left[\sum_{\mathcal N_{3,n_3}} \wh{v}(n_{31})\wh{v}(n_{32})n_{33}^2\wh{v}(n_{33})\right]dt'\label{eq:quintic-d},\\
		& \sum_{\N_{3,n}} \int_0^t e^{it'\mu(n)} \frac{n}{\phi(\textbf{n})}\wh{v}(n_1) \wh{v}(n_2)n_3^3 \left[\sum_{\mathcal N_{3,n_3}} \wh{v}(n_{31})n_{32}\wh{v}(n_{32})n_{33}\wh{v}(n_{33})\right]dt'\label{eq:quintic-e},\\
		& \sum_{\N_{3,n}}\int_0^t e^{it'\mu(n)} \frac{n}{\phi(\textbf{n})} \wh{v}(n_1) \wh{v}(n_2)n_3^3 \left[\sum_{\mathcal N_{5,n_3}} \wh{v}(n_{31})\wh{v}(n_{32})\wh{v}(n_{33})\wh{v}(n_{34})\wh{v}(n_{35})\right]   dt', \label{eq:quintic-c}
	\end{align}
\end{subequations}
where
\begin{align*}
	\mathcal N_{3,n_3} &= \left\{ (n_{31},n_{32},n_{33}) \in \Z^3 : \;\;\begin{aligned} &n_{31} +n_{32} + n_{33} =n_3\\  &(n_{31}+n_{32})(n_{31}+n_{33})(n_{32}+n_{33})\neq 0
	\end{aligned}    \right\},\\
	\mathcal N_{5,n_3} &= \left\{ (n_{31},n_{32},n_{33},n_{34},n_{35}) \in \Z^5 : \;\;\begin{aligned} &n_{31} +n_{32} + n_{33}+ n_{34}+ n_{35} =n_3\\  &n_{3\ell}+n_{3j}+n_{3m}+n_{3p}\neq 0,\\
		&	1 \le \ell <  j < m < p \le 5
	\end{aligned}    \right\}.
\end{align*}

Note that the septic term \eqref{eq:quintic-c} can be removed (due to $u(0,t,x) = 0$) and the distribution of \eqref{eq:quintic-e} is better than \eqref{eq:quintic-d} (by the same reason in Remark \ref{rem:remove}). Thus, we only deal with \eqref{eq:quintic-d} here, and see Appendix \ref{sec:appendix-ill2} for the estimate of \eqref{eq:quintic-e}.

\medskip

A direct computation gives
\begin{align*}
	\partial_\de^5 \eqref{eq:quintic-d}\Big|_{\de=0} = 	\wh{\mathcal D(v_0)(t)}(n), 
\end{align*}
where
\begin{align*}
	\wh{\mathcal D(v_0)(t)}(n) &=  \sum_{(\mathcal N_{3,n}, \mathcal N_{3,n_3})}\int_0^t e^{-it'\mu(n)}\frac{n}{\phi(\textbf{n})}  n_3^{3}n_{33}^2e^{it'\mu(n_1)}\wh{v_0}(n_1)e^{it'\mu(n_2)}\wh{v_0}(n_2) \\
	&\hspace{3.6cm}\times e^{it'\mu(n_{31})}\wh{v_0}(n_{31})e^{it'\mu(n_{32})}\wh{v_0}(n_{32})e^{it'\mu(n_{33})}\wh{v_0}(n_{33})dt'.
\end{align*}
Here $(\mathcal N_{3,n}, \mathcal N_{3,n_j})$ means that  $(n_1,n_2,n_3) \in \mathcal N_{3,n}$ and $(n_{j1},n_{j2},n_{j3}) \in \mathcal N_{3,n_j}$. 

\medskip

Now we prove Theorem \ref{thm:failure} by constructing a suitable initial data for which the following estimate is not valid:
\begin{align}\label{eq:contraction-nonli}
	\normo{\mathcal D(v_0)(t)}_{H^s(\T)} \les \|v_0\|_{H^s(\T)}^5.
\end{align}
Let fix $0 < t \ll 1$, and let $ N \in \mathbb N$ be chosen later. We define the following sequence
\begin{align}\label{eq:counter-c5}
	a_n = \begin{cases} N^{-s},  &\mbox{if} \quad  n= N-1, N,\\
1,   &\mbox{if} \quad  n= -2,-1,1,2,\\
		0,       &\mbox{otherwise},
	\end{cases} 
\end{align}
and consider $\wh{v_0}(n) = a_n$. Then we have $v_0 \in H^s(\T)$ and $\|v_0\|_{H^s(\T)} \sim 1$. We rewrite $\mathcal D(v_0)(t)$ in a simple form
\[\wh{\mathcal D(v_0)(t)}(n)=\sum_{(\mathcal N_{3,n}, {\mathcal N_{3,n_3}})}\int_0^t e^{it'\vp(\textbf{m})} \frac{n}{\phi(\textbf{n})}n_3^3n_{33}^2 \wh{v_0}(n_1)\wh{v_0}(n_2)\wh{v_0}(n_{31})\wh{v_0}(n_{32})\wh{v_0}(n_{33})\;dt',\]
where
\[
\vp(\textbf{m}) = -\mu(n) +\mu(n_{1}) + \mu(n_{2}) + \mu(n_{31}) + \mu(n_{32}) + \mu(n_{33})
\]
for $\textbf{m} := (n,n_{1},n_{2},n_{31},n_{32},n_{33})$ satisfying $(n_1,n_2,n_3) \in \mathcal N_{3,n}$ and $(n_{31}, n_{32}, n_{33}) \in {\mathcal N_{3,n_3}}$. 	By the support property of $v_0$, we have
\[
\normo{\mathcal D_0(v_0)(t)}_{H^s(\T)} \les \normo{\mathcal D(v_0)(t)}_{H^s(\T)},
\]
where 
\begin{align*}
	&\wh{\mathcal D_{0}(v_0)(t)}(N) =\int_0^t \frac{n}{\phi(\textbf{n})}n_3^3n_{33}^2 \wh{v_0}(n_1)\wh{v_0}(n_2)\wh{v_0}(n_{31})\wh{v_0}(n_{32})\wh{v_0}(n_{33})dt' \Big|_{\textbf{m} = \textbf{m}_0}
\end{align*}
for $ \textbf{m}_0 :=(N,2,-1,-2,1,N)$. In particular, $\textbf{m}_0$ satisfies that $n_3 = N-1$,  $(n_1,n_2,n_3) \in \mathcal N_{3,n}$, and $(n_{31}, n_{32}, n_{33}) \in {\mathcal N_{3,n_3}}$. Note that $\vp(\textbf{m}_0) =0$ and $\phi(N,2,-1,N-1) \sim N^4$. This yields that
\begin{equation}\label{eq:tN}
\begin{aligned}
	\normo{\mathcal D_{0}(v_0)(t)}_{H^s(\T)}			&= \left|N^s\wh{\mathcal D_{0}(v_0)(t)}(N)\right|\\
	&=\left|N^s\int_0^t e^{it'\vp(\textbf{m}_0)} \frac{N}{\phi(\textbf{n})}(N-1)^3N^2 \wh{v_0}(2)\wh{v_0}(-1)\wh{v_0}(-2)\wh{v_0}(1)\wh{v_0}(N)dt'\right| \\
	&\sim N^s\int_0^t  \frac{N}{N^4}(N-1)^3N^2 N^{-s}dt'\\
	&\sim t N^2.
\end{aligned}
\end{equation}
By taking $N \in \mathbb N$ such that $t^{-\frac12} \ll N$, we have
\[1 \ll tN^2 \sim \|\mathcal D_{0}(v_0)(t)\|_{H^s(\T)} \les \normo{ \mathcal D(v_0)(t) }_{H^s(\T)},\]
which cannot guarantee that \eqref{eq:contraction-nonli} is valid, due to $\|v_0\|_{H^s(\T)}^5 \sim 1$. Therefore we complete the proof of Theorem \ref{thm:failure}.

\begin{rem}\label{rem:v_3}
A direct computation $\partial_\delta^5 v$ on \eqref{eq:5mkdv-c5} also proves Theorem \ref{thm:failure} and is equivalent to our method (normal form approach) due to the presence of $v_3$ in the cubic nonlinearities. Note that $v_3$ involves one more integral in terms of time variable, and this time integration compensates for $(\phi(\textbf{n}))^{-1}$ in \eqref{eq:normal form}. 
\end{rem}

\begin{rem}\label{rem:C^3diffexample}
As mentioned in Remark \ref{rem:C3}, \eqref{eq:ck-smoothness}, $m=3$, also fails for $S(t) = e^{t\partial_x^5}$ and $\mathfrak{M}(u) = u^2\px^3 u$ in \eqref{eq:dispersive-duhamel}. Such a phenomenon appears when taking $\wh{v}(t,n)=e^{itn^5}a_n$, where
\[a_n = \begin{cases} N^{-s},   &\mbox{if} \quad  n= N,\\
1,   &\mbox{if}  \quad n= \pm 1,\\
		0,      &\mbox{otherwise}.
	\end{cases}\]
The particular quadruple $(n,n_1,n_2,n_3)$ to make one similarly as \eqref{eq:tN} is $(N,1,-1,N)$.
\end{rem}

\appendix

\section{Proof of Theorem \ref{thm:main}}\label{sec:Appendix A}
This section devotes to proving Theorem \ref{thm:main} via the compactness argument. The argument used here is analogous to one in \cite{IKT2008}, also has been widely used in, for instance, \cite{GKK2013, KP2015, Zhang2016, Kwak2016, Kwak2018-2}. We provide a brief story of the proof here for self-containedness.

By re-scaling argument (see Remark \ref{rem:scaling} below), Theorem \ref{thm:main} is reduced to the following small data local well-posedness:
\begin{prop}\label{prop:small data1-3}
	Let $s \ge 2$ and $T \in (0,1]$. Suppose that the initial data $u_0 \in H^s(\T)$ is satisfying
	\begin{equation}\label{eq:specified1}
		\int_{\T} (u_0(x))^2 \; dx = \gamma_1, \hspace{2em} \int_{\T} (\px u_0(x))^2 + (u_0(x))^4 \; dx = \gamma_2,
	\end{equation}
	for some $\gamma_1, \gamma_2 \ge 0$, and $\norm{u_0}_{H^s(\T)} \le \delta_0 \ll 1$, for some sufficiently small $\delta_0 > 0$. Then, there exists   a unique solution $u(t)$ to \eqref{eq:5mkdv3}  with the initial data $u_0$ on $[-T,T]$ satisfying
	\begin{align}
		&u(t,x) \in C([-T,T];H^s(\T)),\nonumber\\
		&\eta(t)\sum_{n \in \Z} e^{i(nx - 20n\int_0^t \norm{u(s)}_{L^4}^4 \; ds)}\wh{u}(t,n) \in F^s(T) \cap C([-T,T];H^s(\T)) , \nonumber
	\end{align}
	where $\eta$ is any cut-off function in $C^{\infty}(\R)$ with $\mathrm{supp}\,\eta \subset [-T,T]$.
	
	Moreover, the flow map $S_T : H^s(\T) \to C([-T,T];H^s(\T))$ is continuous.
\end{prop}

\begin{rem}\label{rem:scaling}
	Proposition \ref{prop:small data1-3} also holds for arbitrary large initial data in $H^s(\T)$ by using a suitable scaling. Indeed, for a solution $v$ to \eqref{eq:5mkdv-gen} on $[-T,T]$ with an arbitrary initial data $v_0 \in H^s(\T)$, let us define $v_\lam(t,x) = \lam v(\lam^5 t, \lam x)$ for some $\lam >0$. Then $v_\lam$ is also the solution to \eqref{eq:5mkdv-gen} with initial data $v_\lam(0,x) = \lam v_0(\lam x)$. Moreover, we have
	\[
	\|v_\lam(0,\cdot)\|_{H^s(\T_\lam)} \les \lam^{\frac12} (1+\lam^s) \|v_0\|_{H^s(\T)},
	\]
	where $\T_\lam = \left[0, \frac{2\pi}{\lam}\right]$. 

On the other hand, nonlinear and energy estimates can be similarly obtained, but with $\lambda^{-\frac12}$ coefficient (see Section 7.2 in \cite{Kwak2018-2} for details). Nevertheless, by taking $\lam \sim \left(2\de_0\|v_0\|_{H^s(\T)}^{-1}\right)^6$, one has $\lam^{-\frac13}\|v_\lam(0)\|_{H^s(\T_\lam)} \le \de_0$, which recovers $\lambda^{-\frac12}$.
\end{rem}

\begin{prop}[Appendix A of \cite{GKK2013}]\label{prop:small data1-1}
	Let $s \ge 0$, $T \in (0,1]$. Then the following holds.
\begin{enumerate}
	\item 	For  $v \in F^s(T)$, we have
	\begin{equation}\label{eq:small data1.1}
		\sup_{t \in [-T,T]} \norm{v(t)}_{H^s(\T)} \lesssim \norm{v}_{F^s(T)}. 
	\end{equation} 
 \item For  $v,w \in C([-T,T];H^{\infty}(\T))$ satisfying 
	\[\pt \wh{v}(n) + i\mu(n) \wh{v}(n) = \wh{w}(n) \;\;\mbox{  on  } (-T,T) \times \Z,\]
we have
	\begin{equation}\label{eq:small data1.2}
		\norm{v}_{F^s(T)} \lesssim \norm{v}_{E^s(T)} + \norm{w}_{N^s(T)}.
	\end{equation} 
\end{enumerate}
\end{prop}

\begin{proof}[Proof of Proposition \ref{prop:small data1-3}]
	We set $s\ge2$. Exploiting the theory of complete integrability (or inverse spectral method), we see that there is a smooth solution $u$ to \eqref{eq:5mkdv-gen} with \eqref{eq:coefficient constraint} and  $u_0 \in H^{\infty}(\T)$.\footnote{Since we only consider the \eqref{eq:5mkdv-gen} with \eqref{eq:coefficient constraint} for $c_1=40$, one can guarantee the complete integrability (see the Remark \ref{rem:integrable is enough}). For non-integrable case, one can guarantee high regularity solutions from previous result \cite{Kwak2018-2}.} For a solution $v$ to \eqref{eq:5mkdv3}, since $\norm{u_0}_{H^s(\T)} = \norm{v_0}_{H^s(\T)}$, Proposition \ref{prop:small data1-1}, \ref{prop:nonlinear1} (a) and Corollary \ref{cor:energy1-2}  imply that
	\begin{align*}
		\left \{
		\begin{aligned}
			\norm{v}_{F^{s}(T')}&\lesssim \norm{v}_{E^s(T')} + \sum_{j=1}^4\norm{N_{j}(v)}_{N^s(T')},\\
			\sum_{j=1}^4\norm{N_{j}(v)}_{N^s(T')} &\les (1+\norm{v}_{F^s(T')}^2)\norm{v}_{F^s(T')}^3,\\
			\norm{v}_{E^s(T')}^2 &\lesssim (1+ \norm{v_0}_{H^s(\T)}^2)\norm{v_0}_{H^s(\T)}^2 + (1 + \norm{v}_{F^{s}(T)}^2 + \norm{v}_{F^{s}(T)}^4)\norm{v}_{F^{s}(T)}^4,
		\end{aligned}
		\right.
	\end{align*}
	for any $T' \in [0,T]$. Let 
	$$
	X(T') = \norm{v}_{E^s(T')} + \sum_{j=1}^4\norm{N_{j}(v)}_{N^s(T')}.
	$$
	Then, we get that $X(T')$ is non-decreasing and continuous on $[0,T]$. By the bootstrap argument,   we obtain $X(T') \lesssim \norm{v_0}_{H^s(\T)}$ (see \cite{KP2015, Zhang2016} for details), and hence
	\begin{equation}\label{eq:a priori 0}
		\norm{v}_{F^s(T')} \lesssim \norm{v_0}_{H^s(\T)},
	\end{equation}
	for all $T' \in [0,T]$ and enough small $\delta_0$. Then, Proposition \ref{prop:small data1-1} leads us that
	\[
	\sup_{t \in [-T,T]}\norm{v}_{H^s(\T)} \lesssim \norm{v_0}_{H^s(\T)}.
	\]

	We fix $u_0 \in H^s(\T)$ with $\norm{u_0}_{H^s(\T)} \le \delta_0 \ll 1$. And let us  choose a sequence of functions $\set{u_{0,\ell}}\subset H^{\infty}(\T)$ such that $u_{0,\ell}$ satisfies \eqref{eq:specified1} and $u_{0,\ell} \to u_0$ in $H^s(\T)$ as $\ell \to \infty$. Let $u_\ell(t) \in H^{\infty}(\T)$ be a solution to \eqref{eq:5mkdv-gen} with \eqref{eq:coefficient constraint} and the initial data $u_{0,\ell}$. Then, it suffices to show that the sequence $\set{v_\ell}$ is a Cauchy sequence in $C([-T,T];H^s(\T))$.  For the purpose,  set $v_\ell^K = P_{\le K}v_\ell$ for $K \in \mathbb N$. Then, $v_\ell^K$ satisfies the following frequency localized equation:
	\begin{equation*}
		\begin{split}
			\pt\chi_{\le K}(n)\Big(\wh{v}_\ell(n) - i\mu(n)\wh{v}_\ell(n)\Big)=&-20i\chi_{\le K}(n)n^3|\wh{v}_\ell(n)|^2\wh{v}_\ell(n)\\
			&+6i \chi_{\le K}(n)n\sum_{\N_{5,n}} \wh{v}_\ell(n_1)\wh{v}_\ell(n_2)\wh{v}_\ell(n_3)\wh{v}_\ell(n_4)\wh{v}_\ell(n_5) \\
			&+10i\chi_{\le K}(n)n \sum_{\N_{3,n}} \wh{v}_\ell(n_1)\wh{v}_\ell(n_2)n_3^2\wh{v}_\ell(n_3) \\
			&+5i\chi_{\le K}(n)n \sum_{\N_{3,n}} (n_1+n_2)\wh{v}_\ell(n_1)\wh{v}_\ell(n_2)n_3\wh{v}_\ell(n_3),
		\end{split}
	\end{equation*}
	with the initial data $v_\ell^K(0,\cdot) = v_{0,\ell}^K$. By the triangle inequality, we have
	\begin{align}\label{eq:cauchy}
\begin{aligned}
		\sup_{t \in [-T,T]}\norm{v_\ell - v_m}_{H^s(\T)} \le&~{}\sup_{t \in [-T,T]}\norm{v_\ell - v_\ell^K}_{H^s(\T)}+\sup_{t \in [-T,T]}\norm{v_\ell^K - v_m^K}_{H^s(\T)}\\
		&~{}+\sup_{t \in [-T,T]}\norm{v_m^K - v_m}_{H^s(\T)}.
	\end{aligned}
	\end{align}
Let us consider the first and third terms in the right-hand side of \eqref{eq:cauchy}. Using \eqref{eq:small data1.1} and \eqref{eq:a priori 0}, we see that
	\begin{equation}\label{eq:C1}
		\begin{aligned}
			\sup_{t \in [-T,T]}\norm{v_\ell^K(t) - v_\ell(t)}_{H^s(\T)} &\lesssim \norm{(I-P_{\le K}) v_\ell}_{F^s(T)} \les  \norm{(I-P_{\le K})v_{0,\ell}}_{H^s(\T)},
		\end{aligned}
	\end{equation}
	for any $\ell, K \in \mathbb N$. To estimate the second term in right-hand side of \eqref{eq:cauchy}, let us now consider the two different solutions $v_1,v_2$ to \eqref{eq:5mkdv3} with respective initial data $v_{1,0}, v_{2,0}$. By \eqref{eq:small data1.2}, \eqref{eq:nonlinear2}, \eqref{eq:energy1-3.1.1}, and \eqref{eq:a priori 0}, we have 
	\[
	\norm{v_1-v_2}_{F^0(T)} \lesssim \norm{v_{1,0} - v_{2,0}}_{L^2(\T)}.
	\]
	With this, by \eqref{eq:small data1.2}, nonlinear estimates \eqref{eq:nonlinear1}, and energy estimates \eqref{eq:energy1-3.2.1} and \eqref{eq:energy1-2.1.1} with \eqref{eq:a priori 0}, we obtain
	\[
	\norm{v_1-v_2}_{F^s(T)} \lesssim \norm{v_{1,0} - v_{2,0}}_{H^s(\T)} + \Big(\norm{v_{1,0}}_{H^{2s}(\T)}+\norm{v_{2,0}}_{H^{2s}(\T)}\Big)\norm{v_{1,0}-v_{2,0}}_{L^2(\T)}.
	\]
	Then,  we estimate
	\begin{align}\label{eq:C2}
		\begin{aligned}
			&\sup_{t \in [-T,T]}\norm{v_\ell^K(t) - v_m^K(t)}_{H^s(\T)} \\
			&\quad\lesssim \norm{v_\ell^K - v_m^K}_{F^s(T)}\\
			&\quad\lesssim \norm{v_{0,\ell}^K - v_{0,m}^K}_{H^s(\T)} + \Big(\norm{v_{0,\ell}^K}_{H^{2s}(\T)}+\norm{v_{0,m}^K}_{H^{2s}(\T)}\Big)\norm{v_{0,\ell}^K-v_{0,m}^K}_{L^2(\T)}\\
			&\quad\lesssim \norm{v_{0,\ell}^K - v_{0,m}^K}_{H^s(\T)} + K^s\norm{v_{0,\ell}^K-v_{0,m}^K}_{L^2(\T)},
		\end{aligned}
	\end{align}
	for any $\ell,m,K \in \mathbb N$.
Since $v_0 \in H^s(\T)$,  we have for sufficiently large $K$,
	\begin{equation}\label{eq:limit1-1}
		\norm{(I-P_K)v_0}_{H^s(\T)} \ll 1.
	\end{equation}
Let us fix the $K$ such that \eqref{eq:limit1-1} holds.	Moreover,  the fact  that $\{v_{0,\ell}\}$ converges to $v_0$ in $H^s(\T)$ as $\ell \to \infty$, implies that $\{v_{0,\ell}\}$ and $\{v_{0,\ell}^K\}$ are Cauchy sequence in $H^s(\T)$. This yields that
\begin{equation*}
		\norm{v_{0,\ell} - v_0}_{H^s(\T)}=o(1) \mbox{ as } \ell \to \infty. 
	\end{equation*}
 Using these facts with \eqref{eq:C1}, we estimate
	\begin{align}\label{eq:limit argument1}
		\begin{aligned}
			\sup_{t \in [-T,T]}\norm{v_\ell^K - v_\ell}_{H^s(\T)}
			&\les  \Big(\norm{v_{0,\ell}-v_{0}}_{H^s(\T)} +\norm{v_0 - v_0^K}_{H^s(\T)} + \norm{v_0^K - v_{0,\ell}^K}_{H^s(\T)}\Big)\\
		&= o(1) \;\;\mbox{ as }\;\; \ell \to \infty.
		\end{aligned}
	\end{align}
In a similar way above with \eqref{eq:C2}, \eqref{eq:limit1-1}, and the fact that $\{v_{0,\ell}^K\}$ is Cauchy sequence in $L^2(\T)$,  we obtain that for large enough $\ell,m \gg 1$,
	\begin{equation*}
	\begin{aligned}
K^s\norm{v_{0,\ell}^K - v_{0,m}^K}_{L^2(\T)} &\ll 1,
	\end{aligned}
\end{equation*}
implies that
	\begin{equation}\label{eq:limit argument2}
		\begin{aligned}
			\sup_{t \in [-T,T]}\norm{v_\ell^K - v_m^K}_{H^s(\T)} &\ll 1
		\end{aligned}
	\end{equation}
for large enough $\ell,m \gg 1$. Therefore, by \eqref{eq:cauchy}, \eqref{eq:limit argument1} and \eqref{eq:limit argument2} lead us  that $\{v_\ell\}$ is Cauchy sequence in $C([-T,T]; H^s(\T))$.

The standard limit argument ensures the existence of the solution, and the rest of the proof follows an analogous way. To complete the proof, it remains to verify that the nonlinear transformation \eqref{eq:modified solution} is bi-continuous in $C([-T,T];H^s(\T))$ with $s \ge 2$, and it is done by Lemma \ref{lem:bi-continuity} below.
\end{proof}

	\begin{lem}\label{lem:bi-continuity}
		Let $s \ge \frac14$ and $0 < T < \infty$. Then, $\NT(u)$ defined as in \eqref{eq:modified solution} is bi-continuous from a ball in $C([-T,T];H^s(\T))$ to itself. 
	\end{lem}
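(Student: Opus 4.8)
The plan is to prove bi-continuity by analyzing the map $\NT$ directly at the level of Fourier coefficients. Writing $\Phi_u(t) := 20 \int_0^t \|u(s)\|_{L^4}^4 \, ds$, the transformation is $\wh{\NT(u)}(t,n) = e^{-in\Phi_u(t)} \wh u(t,n)$, and its inverse is $\wh{\NT^{-1}(v)}(t,n) = e^{in\Phi_u(t)} \wh v(t,n)$ where $u = \NT^{-1}(v)$; note that $\Phi_u(t) = \Phi_v(t)$ since $\|u(t)\|_{L^4} = \|v(t)\|_{L^4}$ (the transformation only multiplies Fourier coefficients by unimodular factors), so the inverse is well-defined and has the same structure as $\NT$. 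Thus it suffices to prove continuity of $\NT$ alone. First I would record the elementary bounds: $\Phi_u(t)$ is real-valued, and for $u \in C([-T,T];H^s(\T))$ with $s \ge 1/4$ the Sobolev embedding $H^{1/4}(\T) \hookrightarrow L^4(\T)$ gives $\|u(t)\|_{L^4}^4 \lesssim \|u(t)\|_{H^{1/4}}^4 \lesssim \|u(t)\|_{H^s}^4$, so $|\Phi_u(t)| \lesssim T \|u\|_{C_T H^s}^4$ and, for two functions $u_1, u_2$,
\[
|\Phi_{u_1}(t) - \Phi_{u_2}(t)| \lesssim T \left(\|u_1\|_{C_T H^s}^3 + \|u_2\|_{C_T H^s}^3\right)\|u_1 - u_2\|_{C_T H^s},
\]
using the multilinear estimate $\big|\|u_1\|_{L^4}^4 - \|u_2\|_{L^4}^4\big| \lesssim (\|u_1\|_{L^4}^3 + \|u_2\|_{L^4}^3)\|u_1-u_2\|_{L^4}$ and Hölder/Sobolev.

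The core estimate is then: if $v = \NT(u)$, control $\|v(t)\|_{H^s}$ in terms of $\|u(t)\|_{H^s}$, and likewise for differences. For the norm itself this is immediate since $|e^{-in\Phi_u(t)}| = 1$ gives $\|v(t)\|_{H^s} = \|u(t)\|_{H^s}$ pointwise in $t$, so $\NT$ maps $C([-T,T];H^s(\T))$ into itself isometrically at each time slice; combined with continuity in $t$ (which follows because $t \mapsto \Phi_u(t)$ is continuous, indeed Lipschitz, and multiplication by $e^{-in\Phi_u(t)}$ depends continuously on the scalar $\Phi_u(t)$ in the strong $H^s$ topology — this last point uses that for fixed $u(t) \in H^s$ the map $\theta \mapsto (e^{-in\theta}\wh u(t,n))_n$ is continuous from $\R$ to $H^s$, which follows from dominated convergence / density of trigonometric polynomials), one gets $\NT(u) \in C([-T,T];H^s(\T))$. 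For the difference, write
\[
\wh{v_1}(t,n) - \wh{v_2}(t,n) = e^{-in\Phi_{u_1}(t)}\big(\wh{u_1}(t,n) - \wh{u_2}(t,n)\big) + \big(e^{-in\Phi_{u_1}(t)} - e^{-in\Phi_{u_2}(t)}\big)\wh{u_2}(t,n).
\]

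The first term contributes exactly $\|u_1(t) - u_2(t)\|_{H^s}$. The main obstacle is the second term: the factor $e^{-in\Phi_{u_1}} - e^{-in\Phi_{u_2}}$ is bounded by $\min(2, |n|\,|\Phi_{u_1}(t) - \Phi_{u_2}(t)|)$, so a crude bound loses a derivative — $|n|$ is not controlled by the $H^s$ norm of $u_2$ at the top frequencies. This is precisely why the statement asserts only continuity, not Lipschitz continuity or analyticity, and why the argument must be done carefully rather than by a naive Lipschitz estimate. The resolution is a standard frequency-splitting: given $\varepsilon > 0$, choose $K$ so that $\|(I - P_{\le K}) u_2\|_{C_T H^s} < \varepsilon$ (possible since $u_2 \in C_T H^s$ and the tail is uniformly small on the compact time interval by equicontinuity of $t \mapsto u_2(t)$ in $H^s$). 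On frequencies $|n| \le K$ use the bound $|e^{-in\Phi_{u_1}} - e^{-in\Phi_{u_2}}| \le K|\Phi_{u_1}(t) - \Phi_{u_2}(t)| \lesssim_{K} \|u_1 - u_2\|_{C_T H^s}$ (times a polynomial in the norms of $u_1, u_2$), which is small once $u_1$ is close to $u_2$; on frequencies $|n| > K$ use $|e^{-in\Phi_{u_1}} - e^{-in\Phi_{u_2}}| \le 2$ together with the tail smallness $\|(I-P_{>K})u_2\|_{C_T H^s} < \varepsilon$ plus $\|(I-P_{>K})u_1\|_{C_T H^s} < \varepsilon + \|u_1 - u_2\|_{C_T H^s}$. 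Combining, $\|v_1 - v_2\|_{C_T H^s} \le \|u_1 - u_2\|_{C_T H^s} + C(K, \|u_1\|, \|u_2\|)\|u_1 - u_2\|_{C_T H^s} + C\varepsilon$, which can be made arbitrarily small; hence $\NT$ is continuous. Applying the same argument to $\NT^{-1}$ (which has identical structure, with $\Phi$ computed from the $L^4$ norm which is invariant under the transform) finishes the bi-continuity. I would also remark that no smoothness beyond $s \ge 1/4$ is needed because the only place regularity enters is the embedding $H^{1/4} \hookrightarrow L^4$ used to make sense of $\Phi_u$.
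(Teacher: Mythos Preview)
Your proof is correct and follows essentially the same approach as the paper's: both exploit that $\|u\|_{L^4}=\|v\|_{L^4}$ (unimodular multipliers), control the phase difference via $H^{1/4}\hookrightarrow L^4$, and handle the derivative loss in $(e^{-in\Phi_{u_1}}-e^{-in\Phi_{u_2}})\wh{u_2}(n)$ by a high--low frequency splitting (small tail for $|n|>K$, Lipschitz bound on the phase for $|n|\le K$). Aside from a harmless typo ($(I-P_{>K})$ should read $(I-P_{\le K})$ in the tail estimate), your argument matches the paper's.
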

	\begin{proof}
One can find the proof in \cite{Kwak2018-2}, thus we only point out crucial parts of the proof. Moreover, an essential part for the completeness of Proposition \ref{prop:small data1-1} is the continuity of $\NT^{-1}$ (in fact, the proof of the other part is analogous). Thus, we only prove
		\[
		u_\ell := \NT^{-1}(v_\ell) \to \NT^{-1}(v) =: u\hspace{1em}\mbox{in}\hspace{1em} C([-T,T];H^s(\T)), \hspace{1em} \mbox{as}\hspace{1em} \ell \to \infty.
		\]
Let fix $s \ge 1/4$, $T \in (0,\infty)$, and $t \in [-T,T]$, and assume that $\norm{v}_{L_T^{\infty}H^s(\T)} \les 1$. Consider
\begin{align}\label{eq:convergence}
		\norm{u_\ell(t) - u(t)}_{H^s(\T)}^2.
		\end{align}
Using \eqref{eq:modified solution}, a direct computation gives
			\begin{align*}
		\wh{u}_\ell(n) -\wh{u}(n) =&~{} e^{20in\int_0^t \norm{u_\ell(s)}_{L^4}^4 \; ds}\wh{v}_\ell(n) - e^{20in\int_0^t \norm{u(s)}_{L^4}^4 \; ds}\wh{v}(n) \\
		=&~{}e^{20in\int_0^t \norm{u(s)}_{L^4}^4 \; ds}\left[e^{20in\int_0^t \norm{u_\ell(s)}_{L^4}^4 - \norm{u(s)}_{L^4}^4 \; ds} - 1 \right]\wh{v}_\ell(n) \\
		&~{}+e^{20in\int_0^t \norm{u(s)}_{L^4}^4 \; ds}(\wh{v}_\ell(n) - \wh{v}(n)),
	\end{align*}
which divides \eqref{eq:convergence} into the following three terms:
\begin{subequations}
		\begin{align}
			& |\wh{v}_\ell(0) - \wh{v}(0)|^2, \label{eq:bi-conti1}\\
			&2^{s+1}\sum_{|n| \ge 1}\left|e^{20in\int_0^t \norm{u_\ell(s)}_{L^4}^4 - \norm{u(s)}_{L^4}^4 \; ds} - 1\right|^2|n|^{2s}|\wh{v}_\ell(n)|^2, \label{eq:bi-conti2}\\
			&2^{s+1}\sum_{|n| \ge 1}|n|^{2s}|\wh{v}_\ell(n) - \wh{v}(n)|^2 \label{eq:bi-conti3}.
		\end{align}
			\end{subequations}
By the hypothesis, we have
	\begin{equation}\label{eq:limit3}
		\norm{v_\ell - v}_{L_T^{\infty}H^s(\T)} =o(1) \mbox{ as } \ell \to \infty,
	\end{equation}
which immediately controls \eqref{eq:bi-conti1} and \eqref{eq:bi-conti3}. For the rest \eqref{eq:bi-conti2}, a direct computation gives
		\begin{equation}\label{eq:bi-conti7}
			\begin{aligned}
				\Big|\int_0^t&\norm{v_\ell(s)}_{L^4}^4 - \norm{v(s)}_{L^4}^4 \; dt \Big| \\
				&\le\int_0^T\norm{v_\ell(t) - v(t)}_{L^4}\Big(\norm{v_\ell(t)}_{L^4}+\norm{v(t)}_{L^4}\Big)\Big(\norm{v_\ell(t)}_{L^4}^2+\norm{v(t)}_{L^4}^2\Big) \; dt\\
				&\lesssim T \left(\sup_{t \in [-T,T]}\norm{v_\ell}_{H^s(\T)}^3+\sup_{t \in [-T,T]}\norm{v}_{H^s(\T)}^3\right)\sup_{t \in [-T,T]}\norm{v_\ell-v}_{H^s(\T)}\\
				&\lesssim T\sup_{t \in [-T,T]}\norm{v_\ell-v}_{H^s(\T)}.
			\end{aligned}
		\end{equation}
Note that from $\norm{v}_{L_T^{\infty}H^s(\T)} \les 1$ and \eqref{eq:limit3}, one can take $M>0$ independent on $\ell$ such that
		\begin{equation}\label{eq:limit2}
			\sum_{|n| > M} |n|^{2s}|\wh{v}_\ell(n)|^2 \ll 1.
		\end{equation}
For this $M$, we divide the summation in \eqref{eq:bi-conti2} into
		\begin{align}\label{eq:bi-conti-sum}
		\sum_{1 \le |n| \le M} (\cdots)+ \sum_{|n| > M}(\cdots) .
		\end{align}
Then, the second summation in \eqref{eq:bi-conti-sum} is immediately controlled by \eqref{eq:limit2}. For the case $|n| \le M$, since $\|u_\ell^2\|_{L^2} = \|v_\ell^2\|_{L^2}$ due to the Plancherel's theorem, \eqref{eq:bi-conti7} in addition to \eqref{eq:limit3} ensures for sufficiently large $\ell$ that
\[
		\left|n\int_0^t \norm{u_\ell(s)}_{L^4}^4 - \norm{u(s)}_{L^4}^4 \; ds\right| \ll 1, 
		\]
		which implies
		\[
		\left|e^{20in\int_0^t \norm{u_\ell(s)}_{L^4}^4 - \norm{u(s)}_{L^4}^4 \; ds} - 1\right|^2 \ll 1.
		\]
This controls the first term in \eqref{eq:bi-conti-sum}, and we indeed obtain
		\[
		\norm{u_\ell - u}_{H^s(\T)} = o(1) \mbox{ as } k \to \infty.
		\]
	\end{proof}

\section{Computations of \eqref{eq:c5-normal-b}, \eqref{eq:c5-normal-c},  and \eqref{eq:quintic-e}}\label{appendix:calculation}

\subsection{Computations for \eqref{eq:c5-normal-b} and \eqref{eq:c5-normal-c}}\label{sec:appendix-ill1}
Similarly as the case of \eqref{eq:c5-normal-d}, we know
\begin{align*}
	\partial_\de^5 \left[\eqref{eq:c5-normal-b} + \eqref{eq:c5-normal-c} \right] \Big|_{\de=0} = \sum_{\ell=1}^2 \left[\wh{\mathcal B_\ell(v_0)(t)}(n) + \wh{\mathcal C_\ell(v_0)(t)}(n) \right] ,
\end{align*}
where
\begin{align*}
	\wh{\mathcal B_1(v_0)(t)}(n) &=   \sum_{(\mathcal N_{3,n}, \mathcal N_{3,n_1})}\int_0^t e^{it'\vp_1(\textbf{m}_1)}\frac{n}{\phi(\textbf{n})}  n_1n_{13}^2n_3^{2}\wh{v_0}(n_{11})\wh{v_0}(n_{12})\wh{v_0}(n_{13})\wh{v_0}(n_2)\wh{v_0}(n_3)  dt',\\
	\wh{\mathcal B_2(v_0)(t)}(n)  &=   \sum_{ (\mathcal N_{3,n}, \mathcal N_{3,n_1})}\int_0^t e^{it'\vp_1(\textbf{m}_1)}\frac{n}{\phi(\textbf{n})} n_1n_{12}n_{13} n_3^{2}\wh{v_0}(n_{11})\wh{v_0}(n_{12})\wh{v_0}(n_{13})\wh{v_0}(n_2)\wh{v_0}(n_3) dt',
\end{align*}
and
\begin{align*}
	\wh{\mathcal C_1(v_0)(t)}(n) &=   \sum_{(\mathcal N_{3,n}, \mathcal N_{3,n_2})}\int_0^t e^{it'\vp_2(\textbf{m}_2)}\frac{n}{\phi(\textbf{n})}  n_2n_{23}^2n_3^{2}\wh{v_0}(n_1)\wh{v_0}(n_{21}) \wh{v_0}(n_{22})\wh{v_0}(n_{23})\wh{v_0}(n_3)  dt',\\
	\wh{\mathcal C_2(v_0)(t)}(n)  &=   \sum_{(\mathcal N_{3,n}, \mathcal N_{3,n_2})}\int_0^t e^{it'\vp_2(\textbf{m}_2)}\frac{n}{\phi(\textbf{n})} n_2n_{22}n_{23} n_3^{2}\wh{v_0}(n_1) \wh{v_0}(n_{21})\wh{v_0}(n_{22}) \wh{v_0}(n_{23})\wh{v_0}(n_3) dt',
\end{align*}
where
\[
\vp_1(\textbf{m}_1) = -\mu(n) + \sum_{\ell=1}^3 \mu(n_{1\ell}) +\mu(n_2) +\mu(n_3), \quad \mbox{for} \quad \textbf{m}_1 := (n,n_{11},n_{12},n_{13},n_2,n_3),
\]
and
\[
\vp_2(\textbf{m}_2) = -\mu(n) +\mu(n_1) + \sum_{\ell=1}^3 \mu(n_{2\ell})  +\mu(n_3), \quad \mbox{for} \quad \textbf{m}_2 := (n,n_1,n_{21},n_{22},n_{23},n_3).
\]
Here $\mathcal N_{3,n_\ell}$ and $(\mathcal N_{3,n}, \mathcal N_{3,n_\ell})$ are defined similarly as before.
By the symmetry between $\mathcal B_\ell$ and $\mathcal C_\ell$, it is enough to consider only $\mathcal B_\ell$. With $v_0$ defined by using \eqref{eq:counter-c5} (but we only deal with that the components of $\textbf{m}_1$ take either $1$ or $N$, except for $n$), we have
 \begin{align}
	\begin{aligned}\label{eq:appen-b1}
		\normo{\mathcal B_1(v_0)(t)}_{H^s(\T)} &= 	\normo{n^s\wh{\mathcal B_1(v_0)(t)}(n)}_{\ell^2} \\
		&\les \sum_{(\mathcal N_{3,n}, \mathcal N_{3,n_1})}   \int_0^t \left\| n^{s}  \frac{n}{\phi(\textbf{n})} n_1n_{13}^2n_3^{2}\wh{v_0}(n_{11})\wh{v_0}(n_{12})\wh{v_0}(n_{13})\wh{v_0}(n_2)\wh{v_0}(n_3)   \right\|_{\ell^2} dt'\\
		&\les t \left(1+ N^{-s} + N^{-2s} \right) (1+ N^{-1-s} + N^{1-s} + N^{1-2s} )\\
		&\les t \max(N^{1-s},1).
	\end{aligned}
\end{align} 
For the other cases of $\textbf{m}_1$, $H^s(\T)$-norm of $\mathcal B_1$ is smaller than or similar to the right-hand side of \eqref{eq:appen-b1}. Similarly, one also has 
\[
\normo{\mathcal B_{2}(v_0)(t)}_{H^s(\T)} \les t\max(N^{1-2s},1),
\]
which particularly comes from the cases $( n_{11},n_{12},n_{13},n_2,n_3) = (1,N,N,1,N)$ and $( n_{11},n_{12},n_{13},n_2,n_3) = (1,1,1,1,1)$. Therefore, we conclude that
\begin{align*}
	\sup_{t \in [0,T]} \normo{\sum_{\ell=1}^2 \mathcal B_\ell(v_0)(t) + \mathcal C_\ell(v_0)(t)  }_{H^s(\T)} \ll tN^2.
\end{align*}

$|n| \sim N$ and rough integration in time

\subsection{Computations for \eqref{eq:quintic-e}}\label{sec:appendix-ill2}
In a similar way to Appendix \ref{sec:appendix-ill1}, we also get 
\begin{align*}
	\partial_\de^5  \eqref{eq:quintic-e} \Big|_{\de=0} =   \wh{ \mathcal D_1(v_0)(t)}(n),
\end{align*}
where
\begin{align*}
	\wh{\mathcal D_1(v_0)(t)}(n)  &=   \sum_{\mathcal N_{3,n}\cap \mathcal N_{3,n_3}}\int_0^t e^{it'\vp(\textbf{m})}\frac{n}{\phi(\textbf{n})}  n_3^{3}n_{32}n_{33}\wh{v_0}(n_1)\wh{v_0}(n_2)\wh{v_0}(n_{31})\wh{v_0}(n_{32})\wh{v_0}(n_{33})dt'.
\end{align*}
Here $\vp$ and $\textbf{m}$ are defined in Section \ref{sec:illposed}. By considering the cases when $(n_1,n_2,n_3) = (1,1,N)$ and $(n_1,n_2, n_{31},n_{32},n_{33}) = (1,1,1,N,N)$, we obtain
 \begin{align}
 	\begin{aligned}\label{eq:ill-upperbound4}
 		\normo{	\mathcal D_{1}(v_0)(t)}_{H^s(\T)}  &\les t\max(N^{2-s},1),
 	\end{aligned}
 \end{align}
Therefore, \eqref{eq:ill-upperbound4} implies that
\begin{align*}
	\sup_{t \in [0,T]} \normo{ \mathcal D_1(v_0)(t)  }_{H^s(\T)} \ll tN^2.
\end{align*}


\begin{thebibliography}{00}
\bibitem{AS1981} M. Ablowitz and H. Segur, Solitons and inverse scattering transform, in ``SIAM Studies in Applied Mathematics" Vol. 4, SIAM, Philadelphia (1981). 

%

\bibitem{BT2006} J. Bejenaru and T. Tao, \emph{Sharp well-posedness and ill-posedness results for a quadratic nonlinear Schr\"odinger equation}, J. Funct. Anal. 233 (2006), 228--259.


\bibitem{BBM} T. B. Benjamin, J. L. Bona, and J. J. Mahony, \emph{Model Equations for Long Waves in Nonlinear Dispersive Systems}, Philosophical Transactions of the Royal Society of London. Series A, Mathematical and Physical Sciences, 272 (1220) (1972), 47--78.





\bibitem{BCS1} J. L. Bona, M. Chen, and J.-C. Saut, \emph{Boussinesq equations and other systems for small-amplitude long waves in nonlinear dispersive media. I: Derivation and linear theory}, J. Nonlinear. Sci. 12 (2002), 283--318.


\bibitem{BCL} J. L. Bona, T. Colin, and D. Lannes, \emph{Long wave approximations for water waves}, Arch. Ration. Mech. Anal. 178 (3) (2005), 373--410.

\bibitem{BLS} J. L. Bona, D. Lannes, and J.-C. Saut, \emph{Asymptotic models for internal waves}, J. Math. Pures Appl. (9) 89 (6) (2008),  538--566.
	
\bibitem{BS1976} J. L. Bona and R. Scott, \emph{Solutions of the Korteweg-de Vries equation in fractional order Sobolev spaces}, Duke Math. J. 43 (1976), 87--99. 

	\bibitem{BS1975} J. L. Bona and  R. S. Smith, \emph{The initial value problem for the Korteweg-de Vries equation},  Philos. Trans. Roy. Soc. London Ser. A 278 (1287) (1975),  555--601.
	
	\bibitem{Bourgain1993} J. Bourgain, \emph{Fourier transform restriction phenomena for certain lattice subsets and applications to nonlinear evolution equations. Parts I, II}, Geom. Funct. Anal. 3 (1993), 107--156, 209--262.
	


\bibitem{BKV2021} B. Bringmann, R. Killip,  and M. Visan, \emph{Global well-posedness for the fifth-order KdV equation in $H^{-1}(\R)$}, Ann. PDE 7 (2) (2021),  21--46.

\bibitem{BGT2002} N. Burq, P. G\'erard, and N. Tzvetkov, \emph{An instability property of the nonlinear Schr\"odinger equation on $S^d$}, Math. Res. Lett. 9 (2-3) (2002), 323--335.

%
%
%

\bibitem{CTD2009} A. Choudhuri, B. Talukdar, and U. Das, \emph{The Modified Korteweg-de Vries Hierarchy: Lax Pair Representation and Bi-Hamiltonian Structure}, Zeitschrift f\"ur Naturforschung A 64 (2009), 171--179.

%
	
\bibitem{CKSTT2003}  J. Colliander, M. Keel, G. Staffilani, H. Takaoka, and T. Tao, \emph{Sharp global well-posedness for KdV and modified KdV on $\R$ and $\T$}, J. Amer. Math. Soc. 16 (2003), 705--749.

	\bibitem{CKSTT2004} J. Colliander, M. Keel, G. Staffilani, H. Takaoka, and T. Tao, \emph{Multilinear estimates for periodic KdV equations, and applications}, J. Funct. Anal. 211 (2004), 173--218.


%

\bibitem{Deift} P. A. Deift, Three Lectures on “Fifty Years of KdV: An Integrable System”. In: Miller, P., Perry, P., Saut, JC., Sulem, C. (eds) ``Nonlinear Dispersive Partial Differential Equations and Inverse Scattering" Fields Institute Communications, vol 83. Springer, New York, NY. https://doi.org/10.1007/978-1-4939-9806-7\underline{\;}1

%
%
%
%

	\bibitem{GGKM1967} C. Gradner, J. Greene, M. Kruskal, and R. Miura, \emph{A method for solving the Korteweg-de Vries equation}, Phys. Rev. Lett. 19 (1967), 1095--1097.


\bibitem{Grunrock} A. Gr\"unrock, \emph{On the hierarchies of higher order mKdV and KdV equations}, Centr. Eur. J. Math. 8   (2010), 500--536.


\bibitem{Guo2009} Z. Guo, \emph{Global well-posedness of Korteweg-de Vries equation in $H^{-3/4}(\R)$}, J. Math. Pures Appl. (9) 91 (6) (2009), 583--597.
	
	
	\bibitem{guo2012} Z. Guo, \emph{Local well-posedness for dispersion generalized Benjamin-Ono equations in Sobolev spaces}, J. Differential Equations 252 (2012), 2053--2084. 
	
	\bibitem{GPWW2011} Z. Guo, L. Peng, B. Wang, and Y. Wang, \emph{Uniform well-posedness and inviscid limit for the Benjamin-Ono-Burgers equation}, Advances in Mathematics 228 (2011), 647--677.
	
	\bibitem{GKK2013} Z. Guo, C. Kwak, and S. Kwon, \emph{Rough solutions of the fifth-order KdV equations}, J. Funct. Anal. 265 (2013), 2791--2829.
	
%
	
\bibitem{Hadamard} J. Hadamard, Sur les probl\`emes aux d\'eriv\'ees partielles et leur signification physique. Princeton University Bulletin. (1902), 49--52.

\bibitem{Hasimoto1970}  H. Hasimoto, \emph{Water waves}, Kagaku, 40 (1970), 401--408.


%
%
%
%

\bibitem{IK2007JMAS}  A. Ionescu and C. Kenig, \emph{Global well-posedness of the Benjamin–Ono equation in low-regularity spaces}, J. Amer. Math. Soc. 20 (3) (2007) 753--798.
	
	\bibitem{IKT2008} A. Ionescu, C. Kenig, and D. Tataru, \emph{Global well-posedness of the KP-I initial-value problem in the energy space}, Invent. Math. 173 (2) (2008), 265--304.
	

\bibitem{KM2018} T. Kappeler and J.-C. Molnar, \emph{On the wellposedness of the KdV/KdV2 equations and their frequency maps},  Ann. Inst. H. Poincar\'e C Anal. Non Lin\'eaire 35 (1) (2018), 101--160.

	\bibitem{KP2003} T. Kappeler and  J. P\"oschel, \emph{On the Korteweg-de Vries equation and KAM theory}, Geometric analysis and nonlinear partial differential equations, Springer, Berlin (2003), 397--416.
	
	\bibitem{KT2005} T. Kappeler and P. Topalov, \emph{Global wellposedness of mKdV in $L^2(\T,\R)$}, Comm. Partial Differential Equations 30 (2005), 435--449. 
	
	\bibitem{KT2006} T. Kappeler and P. Topalov, \emph{Global wellposedness of KdV in $H^{-1}(\T,\R)$}, Duke Math. J. 135 (2006), 327--360. 
	

%

%

\bibitem{Kato1979} T. Kato, \emph{On the Korteweg-de Vries equation}, Manuscripta Math. 29 (1979), 89--99. 



	
	
	\bibitem{KPV1991} C. Kenig, G. Ponce, and L. Vega, \emph{Oscillatory integrals and regularity of dispersive equations}, Indiana U. Math. J 40 (1991), 33--69.

\bibitem{KPV1993} C. Kenig, G. Ponce, and L. Vega, \emph{Well-posedness and scattering results for the generalized Korteweg-de Vries equation via the contraction principle}, Comm. Pure Appl. Math. 46 (1993), 527--620.
	
	\bibitem{KPV1996} C. Kenig, G. Ponce, and L. Vega, \emph{A bilinear estimate with applications to the KdV equation}, J. Amer. Math. Soc. 9 (1996), 573--603.
	
	\bibitem{KP2015} C. Kenig and D. Pilod, \emph{Well-posedness for the fifth-order KdV equation in the energy space}, Trans. Amer. Math. Soc. 367 (2015), 2551-2612.
	
	\bibitem{KP2016} C. Kenig and D. Pilod, \emph{Local well-posedness for the KdV hierarchy at high regularity}, Adv. Diff. Eq. 21 (2016), 801--836.
	

\bibitem{KV2019} R. Killip and M. Visan, \emph{KdV is well-posed in $H^{-1}$}, Ann. of Math. (2) 190 (1) (2019),  249--305.

%

\bibitem{Kishimoto2009} N. Kishimoto, \emph{Well-posedness of the Cauchy problem for the Korteweg-de Vries equation at the critical regularity}, Differential Integral Equations 22 (5/6) (2009),  447--464.

	
\bibitem{KdV1895} D. J. Korteweg and G. de Vries, \emph{On the change of form of long waves advancing in a rectangular canal, and on a new type of long stationary waves}, Phil. Mag. 39 (1895), 422--443. 	


	\bibitem{Kwak2016} C. Kwak, \emph{Local well-posedness for the fifth-order KdV equations on $\T$}, J. Differential Equations 260 (2016), 7683--7737.

\bibitem{Kwak2018-1} C. Kwak, \emph{Periodic fourth-order cubic NLS : Local well-posedness and Non-squeezing property}, J. Math. Anal. Appl.  461 (2) (2018), 1327--1364.


\bibitem{Kwak2018-2} C. Kwak, \emph{Low regularity Cauchy problem for the fifth-order modified KdV equations on $\T$}, J. Hyperbolic Differ. Equ. 15 (3) (2018), 463--557. 

\bibitem{Kwak2020} C. Kwak, \emph{Well-posedness issues on the periodic modified Kawahara equation}, Annales de l'Institut Henri Poincar\'e C, Analyse Non Lin\'eaire 37 (2020), 373--416.
	
		
	\bibitem{Kwon2008-1} S. Kwon, \emph{On the fifth order KdV equation: Local well-posedness and lack of uniform continuity of the solution map} J. Differential Equations 245 (2008), 2627--2659.
	
	\bibitem{Kwon2008-2} S. Kwon, \emph{Well-posedness and ill-posedness of the fifth-order modified KdV equation}, Electron. J. Differential Equations 2008 (2008), 1--15.

%
	
	\bibitem{Lax1968} P. Lax, \emph{Integrals of nonlinear equations of evolution and solitary waves}, Comm. Pure Appl. Math 21 (1968), 467--490.
	
%
%
	
\bibitem{Liu2015} B. Liu, \emph{A priori bounds for KdV equation below $H^{-\frac34}$} J. Funct. Anal. 268 (3) (2015),  501--554.

	\bibitem{Magri1978} F. Magri, \emph{A simple model of the integrable Hamiltonian equation}, J. Math. Phys. 19 (1978), 1156--1162.
	
%

	\bibitem{Miura1968} R. Miura, \emph{Korteweg-de Vries equation and generalizations, I. A remarkable explicit nonlinear transformation}, J. Math. Phys. 9 (1968), 1202--1204.

\bibitem{MGK1968} R. Miura, C. Gradner, and M. Kruskal, \emph{Korteweg-de Vries equation and generalizations, II.Existence of conservation laws and constants of motion}, J. Math. Phys. 9 (1968), 1204--1209.


\bibitem{Molinet2011} L. Molinet, \emph{A note on ill posedness for the KdV equation}, Differential Integral Equations 24 (7/8) (2011), 759--765. 

\bibitem{MPV2018} L. Molinet, D. Pilod, and S. Vento, \emph{On unconditional well-posedness for the periodic modified Korteweg-de Vries equation},  J. Math. Soc. Japan 741 (1) (2019), 147--201.


\bibitem{MST2001} L. Molinet, J. C. Saut, and N. Tzvetkov, \emph{Ill-posedness issues for the Benjamin–Ono and related equations}, SIAM J. Math.
Anal. 33 (2001), 982--988.




\bibitem{NTT2010} K. Nakanish, H. Takaoka, and Y. Tsutsumi, \emph{Local well-posedness in low regularity of the mKdV equation with periodic boundary condition}, DCDS 28 (4) (2010), 1635--1654.

%
%
%


	
	\bibitem{Ponce1993} G. Ponce, \emph{Lax pairs and higher order models for water waves}, J. Differential Equations 102 (2) (1993), 360--381.
	

\bibitem{ST1976} J. C. Saut and R. Temam, \emph{Remarks on the Korteweg-de Vries equation}, Israel J. Math. 24 (1976), 78--87. 

%
%

	\bibitem{Staffilani1997} G. Staffilani, \emph{On solutions for periodic generalized KdV equations}, Int. Math. Res. Not. 18 (1997), 899--917.
	
\bibitem{TT2004} H. Takaoka and Y. Tsutsumi, \emph{Well-posedness of the Cauchy problem for the modified KdV equation with periodic boundary condition}, Int. Math. Res. Not. 56 (2004), 3009--3040.

	\bibitem{Tao2001} T. Tao, \emph{Multilinear weighted convolution of $L^2$ functions and applications to nonlinear dispersive equations}, Amer. J. Math. 123 (5) (2001), 839--908.
	
%
%

\bibitem{Tzvetkov1999} N. Tzvetkov, \emph{Remark on the local ill-posedness for KdV equation}, C.R. Acad. Sci. Paris S\'er. I Math. 329 (1999),  1043--1047.

%
%
%
%


\bibitem{Zhang2016} Y. Zhang, \emph{Local well-posedness of KP-I initial value problem on torus in the Besov space},  Comm. Partial Differential Equations 41 no. 2 (2016), 256--281.

	
\end{thebibliography}
\end{document}